\documentclass[11pt]{ip-journal}


\newtheorem{prop}{Proposition}[section]
\newtheorem{theo}[prop]{Theorem}
\newtheorem*{theo*}{Theorem}
\newtheorem{lemm}[prop]{Lemma}
\newtheorem{coro}[prop]{Corollary}

\newtheorem{defi}[prop]{Definition}
\newtheorem{rema}[prop]{Remark}

\theoremstyle{definition}

\newtheorem{conj}[prop]{Conjecture}


\newcommand{\RR}{\mathbf{R}}

\newcommand{\ZZ}{\mathbf{Z}}

\newcommand{\cA}{\mathcal A}

\newcommand{\cC}{\mathcal C}

\newcommand{\cH}{\mathcal H}

\newcommand{\cP}{\mathcal P}




\DeclareMathOperator{\tr}{tr}

\DeclareMathOperator{\graph}{graph}

\DeclareMathOperator{\spt}{spt}


\DeclareMathOperator{\dist}{dist}

\DeclareMathOperator{\inj}{inj}

\DeclareMathOperator{\Ric}{Ric}

\DeclareMathOperator{\secondfund}{II}


\newcommand{\pa}[2]{\frac{\partial #1}{\partial #2}}
\newcommand{\paop}[1]{\pa{}{#1}}
\newcommand{\td}[2]{\frac{d #1}{d #2}}
\newcommand{\bangle}[1]{\left\langle #1 \right\rangle}

\newcommand{\ep}{\varepsilon}

\numberwithin{equation}{section}



\begin{document}

\title{The dihedral rigidity conjecture for $n$-prisms}

\author{Chao Li}
\address{Courant institute of mathematical sciences, New York University, \\ 251 Mercer St, New York, NY 10012}
\email{chaoli@nyu.edu}

\begin{abstract}
    We prove the following comparison theorem for metrics with nonnegative scalar curvature, also known as the dihedral rigidity conjecture by Gromov: for $n\le 7$, if an $n$-dimensional prism has nonnegative scalar curvature and weakly mean convex faces, then its dihedral angle cannot be everywhere not larger than its Euclidean model, unless it is isometric to an Euclidean prism. The proof relies on constructing certain free boundary minimal hypersurface in a Riemannian polyhedron, and extending a dimension descent idea of Schoen-Yau. Our result is a localization of the positive mass theorem.
\end{abstract}

\keywords{Dihedral rigidity, scalar curvature ridigidy, polyhedron comparison, free boundary minimal surfaces, Schoen-Yau dimension descent.}
\subjclass{53A10, 53C21, 53C24}

\maketitle

\section{Introduction}
In a paper \cite{Gromov2014Dirac} from 2014, Gromov proposed the first steps towards understanding Riemannian manifolds with scalar curvature bounded below. He suggested that a polyhedron comparison theorem should play a role analogous to that of the Alexandrov's triangle comparisons for spaces with sectional curvature lower bounds \cite{AleksandrovRerestovskiiNikolaev86}. Precisely, let $M^n$ be a convex polyhedron in Euclidean space, and $g$ a metric on $M$. Denote the Euclidean metric $g_0$. Gromov made the following conjecture (see section 2.2 of \cite{Gromov2014Dirac}, and section 7, Question $F_1$ of \cite{Gromov2018Adozen}):

\begin{conj}[The dihedral rigidity conjecture]\label{conj:dihedral.rigidity}
    Suppose $(M,g)$ has nonnegative scalar curvature and weakly mean convex faces, and along the intersection of any two adjacent faces, the dihedral angle of $(M,g)$ is not larger than the (constant) dihedral angle of $(M,g_0)$. Then $(M,g)$ is isometric to a flat Euclidean polyhedron.
\end{conj}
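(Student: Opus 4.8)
I aim to prove the conjecture when $M$ is an $n$-prism (combinatorially $\Delta^{n-1}\times[0,1]$) and $n\le 7$, by induction on $n$, the engine being a free-boundary minimal hypersurface inside the Riemannian polyhedron together with a Schoen--Yau-style dimension descent --- the statement being, in effect, a localization of the positive mass theorem. One must set up the inductive hypothesis for a class of polyhedra closed under the slicing construction below; for prisms this forces simplices to be admitted as well, since the natural slice of a prism between its two simplicial faces is a simplex. The base case $n=2$ is elementary: a $2$-prism is a quadrilateral disk, $R_g=2K$, weak mean convexity of the edges means nonnegative geodesic curvature, the model angles are all $\pi/2$, and Gauss--Bonnet, $\int_M K+\int_{\partial M}\kappa_g+\sum_i(\pi-\alpha_i)=2\pi$, together with $\sum_i(\pi-\alpha_i)\ge 2\pi$ forces $K\equiv0$, $\kappa_g\equiv0$ and $\alpha_i=\pi/2$, so $M$ is a flat rectangle (the triangle case, needed for the simplicial slices, is identical).

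\textbf{Inductive step.} Let $(M^n,g)$, $3\le n\le 7$, satisfy the hypotheses, with simplicial faces $\partial_\pm M$ and side faces $F_1,\dots,F_n$. I would minimize the relative perimeter $\mathcal H^{n-1}\big(\partial^*\Omega\cap(M\setminus\bigcup_iF_i)\big)$ over Caccioppoli sets $\Omega$ with $\partial_-M\subset\partial^*\Omega$ and $\overline\Omega\cap\partial_+M=\varnothing$ --- i.e.\ among hypersurfaces $\Sigma$ separating the two simplicial faces and meeting $\bigcup_iF_i$ ``freely''. Any competitor must span the region between $\partial_-M$ and $\partial_+M$, so its $g$-area has a positive lower bound; hence a non-degenerate minimizer $\Sigma$ exists. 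Interior and free-boundary regularity for area minimizers make $\Sigma$ a smooth embedded hypersurface meeting $\bigcup_iF_i$ orthogonally, \emph{provided} $\dim\Sigma=n-1\le6$; the potential codimension-$7$ singular set is exactly what restricts us to $n\le7$. Combinatorially $\Sigma$ is a simplex with faces $\Sigma\cap F_i$ and edges along $\Sigma\cap(F_i\cap F_j)$, and by orthogonality its dihedral angles are bounded above by those of $M$ along the corresponding edges, hence by the model values. As $\Sigma$ is minimizing it is stable: substituting the free-boundary second-variation inequality into the Gauss equation and using $R_g\ge0$ with the mean convexity of the faces produces a positive Robin-type operator on $\Sigma$; taking its first eigenfunction and conformally changing the induced metric --- the Schoen--Yau/Gromov--Lawson trick, adapted to the free-boundary setting so that weak mean convexity of the faces is preserved --- gives a metric on $\Sigma$ with $R\ge0$ and weakly mean convex faces. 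Thus $\Sigma$ satisfies the $(n{-}1)$-dimensional hypotheses, and by induction it is isometric to its Euclidean model with dihedral angles equal to the model ones.

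\textbf{Propagation of rigidity.} Equality must then propagate to $M$. The equality case of the second variation makes $\Sigma$ totally geodesic with $\Ric_g(\nu,\nu)\equiv0$ along it, makes the faces totally geodesic along $\partial\Sigma$, and yields a positive Jacobi field obeying the Robin condition; this integrates to a foliation of a neighborhood of $\Sigma$ by flat, totally geodesic, free-boundary minimal slices. Analyzing the foliation shows $g$ is a Euclidean product near $\Sigma$, and since the set of points lying on such a flat slice is open and closed, it is all of $M$. Hence $(M,g)$ is a flat polytope realizing the model dihedral angles, i.e.\ a Euclidean prism, which also forces the original angle inequalities to be equalities.

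\textbf{Main obstacle.} The crux is the behavior of $\Sigma$ near the codimension-$\ge2$ skeleton of $M$: one must show the minimizer neither spills onto nor oscillates pathologically near the edges $F_i\cap F_j$, and that $\Sigma$ genuinely inherits the dihedral-angle comparison there. This is exactly where ``dihedral angle $\le$ model'' is used --- as a comparison and first-variation input near the lower-dimensional faces, which forces $\partial\Sigma$ to turn each corner at the correct angle and, for the simplicial slices, prevents the minimizer from collapsing onto a vertex region. For obtuse model angles this is genuinely delicate, which is one reason the result is proved for prisms rather than arbitrary convex polytopes. A secondary but real difficulty, already in the interior, is to make \emph{weak mean convexity} of the faces (rather than full convexity) suffice in the free-boundary stability inequality: this needs a decomposition of the second fundamental form of each face along $\partial\Sigma$, absorbing the tangential trace into the Robin term via the mean curvature of $\partial\Sigma$ inside that face.
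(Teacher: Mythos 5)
Your inductive scheme has a structural gap: the class of polytopes you chose is not closed under your own slicing construction in a way the available regularity theory can handle. You take a ``prism'' to be $\Delta^{n-1}\times[0,1]$ and slice between the two simplicial faces, so the induction must also cover simplices; but slicing a simplex is \emph{not} a free boundary (orthogonal contact) problem. The lateral faces of a simplex meet the bottom face at angles $\gamma_j\neq\pi/2$, so the natural functional is the capillary one, $\cH^{n-1}(\partial\Omega\cap\mathring M)-\sum_j\cos\gamma_j\,\cH^{n-1}(\partial\Omega\cap F_j)$, and for minimizers of that functional in domains with corners there is no regularity theory strong enough to run the Schoen--Yau descent (the best known partial regularity, due to De Philippis--Maggi, only gives a codimension-$2$ singular set and requires $C^{1,1}$ domains). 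This is precisely why the paper works with prisms of the form $P_0^2\times[0,1]^{n-2}$, $P_0$ a polygon with angles $\le\pi/2$: slicing between the top and bottom faces produces again a prism of the same type, every contact angle along the lateral boundary is exactly $\pi/2$, and the minimizer is an honest free boundary minimizing current to which the Edelen--Li regularity theorem applies. Moreover, even your \emph{first} slice runs into trouble when $n\ge 4$: the corner regularity (and the Schauder theory for the Robin eigenvalue problem on the slice) requires the local model to be a single two-dimensional wedge of opening $\le\pi/2$ crossed with orthogonal factors, i.e.\ at most one non-right pair of faces at any boundary point. For $\Delta^{n-1}\times[0,1]$ with $n-1\ge3$, three or more lateral faces meet pairwise non-orthogonally along lower strata, so the local model is a genuine simplicial cone and the regularity input you invoke simply is not available there. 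Your ``main obstacle'' paragraph correctly senses that the edges are the danger, but the fix is not an angle-comparison argument near the skeleton; it is choosing the polytope class so that the skeleton has the wedge-times-cube structure from the start.

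A secondary gap is the rigidity propagation. Asserting that the equality case ``yields a positive Jacobi field obeying the Robin condition; this integrates to a foliation of a neighborhood of $\Sigma$ by flat, totally geodesic, free-boundary minimal slices'' is not justified: infinitesimal rigidity of one slice does not by itself produce a foliation by minimal (let alone minimizing and flat) leaves, and making that work requires an implicit-function-theorem construction plus an argument that each leaf is again rigid. The paper takes a different route: it perturbs the metric conformally on a small annulus away from $\Sigma$ so that $R>0$ there while the faces stay weakly mean convex and $\Sigma$ stays weakly mean convex from one side, re-solves the minimization problem in the perturbed metric, and uses a curvature estimate for free boundary minimizers (proved via a blow-up/Bernstein argument in the wedge model) to pass to a limit. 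This yields a \emph{dense} family of infinitesimally rigid flat slices, with boundaries dense in $\partial M$, from which $\mathrm{Rm}\equiv0$ and total geodesy of the faces follow; no foliation is ever constructed. If you want to keep your foliation approach you must supply the existence of the foliation and the minimality/rigidity of every leaf, neither of which follows from the second-variation equality case alone.
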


When $n=2$, Conjecture \ref{conj:dihedral.rigidity} follows directly from the Gauss-Bonnet formula. In fact, given a Riemann surface $(M^2,g)$, the Gauss curvature of $K_g>0$ everywhere if and only if there exists no geodesic triangle with total inner angle smaller than $\pi$. This fact is generalized by Alexandrov \cite{AleksandrovRerestovskiiNikolaev86} in the study of sectional curvature lower bounds in all dimensions. 

As a first step towards Conjecture \ref{conj:dihedral.rigidity}, Gromov studied the case for cubes, and obtained the following theorem:

\begin{theo}[\cite{Gromov2014Dirac}]\label{theo.cube.comparison.nonrigid}
Let $M=[0,1]^n$ be a cube, and $g$ be a Riemannian metric on $M$. Then $(M,g)$ cannot simultaneously satisfy:
\begin{enumerate}
    \item The scalar curvature of $g$ is positive;
    \item Each face of $M$ is strictly mean convex with respect to the outward normal vector field;
    \item Everywhere the dihedral angle between two faces of $M$ is acute.
\end{enumerate}
\end{theo}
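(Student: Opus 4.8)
The plan is to argue by contradiction and induct downward on the dimension, in the spirit of the Schoen--Yau dimension descent. (Gromov's own proof in \cite{Gromov2014Dirac} instead uses the Dirac operator with boundary conditions adapted to the corners, and is valid in every dimension; the minimal-hypersurface argument outlined here needs $n\le 7$, so that the relevant minimizers are smooth, and is the one in the spirit of the present paper.) So suppose $(M=[0,1]^n,g)$ satisfied (1)--(3). When $n=2$, $M$ is a Riemannian quadrilateral with $K_g>0$, with geodesically convex sides (since (2) forces $\kappa_g>0$), and with interior angles $\theta_1,\dots,\theta_4<\pi/2$; the Gauss--Bonnet formula then reads $2\pi=\int_M K_g\,dA+\int_{\partial M}\kappa_g\,ds+\sum_{j=1}^4(\pi-\theta_j)$, but its right-hand side exceeds $0+0+4\cdot\tfrac{\pi}{2}=2\pi$, a contradiction. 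Assume now $n\ge 3$ and that the theorem holds for $(n-1)$-cubes.

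Distinguish one pair of opposite faces $F^\pm=\{x_1=0\},\{x_1=1\}$, let $T$ denote the union of the remaining $2(n-1)$ side faces, and minimize the area functional (or a suitably weighted, prescribed-contact-angle variant of it) among hypersurfaces $\Sigma\subset M$ with $\partial\Sigma\subset T$ that separate $F^-$ from $F^+$; the flat slice $\{x_1=\tfrac12\}$ shows the competitor class is nonempty. By the direct method a minimizer $\Sigma$ exists, and for $n\le 7$ it is a smooth properly embedded hypersurface whose first variation forces the prescribed contact with the smooth part of $T$ along $\partial\Sigma$. Here the strict mean convexity of the side faces, together with the strict acuteness of the dihedral angles of $M$ along the edges $F_i^\epsilon\cap F_j^\delta$, should serve as barriers preventing $\Sigma$ from accumulating along those edges or from adhering to a side face along a set of positive measure, so that $\Sigma$ --- with its boundary faces $\sigma_i^\epsilon:=\Sigma\cap F_i^\epsilon$ and their mutual intersections --- acquires the combinatorial structure of an $(n-1)$-dimensional cube.

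Next I would verify that $\Sigma$, with its induced metric suitably conformally modified, again satisfies (1)--(3), after which the inductive hypothesis yields the contradiction. The dihedral angles descend at once: along $\sigma_i^\epsilon\cap\sigma_j^\delta$ the contact condition of $\Sigma$ with $F_i^\epsilon$ and with $F_j^\delta$ determines the angle between $\sigma_i^\epsilon$ and $\sigma_j^\delta$ inside $\Sigma$ in terms of the (acute) dihedral angle of $M$ there, and conformal changes preserve angles. For the scalar curvature and the face mean curvatures one exploits that $\Sigma$ is area-minimizing, hence stable: with $H_\Sigma=0$ the Gauss equation gives $2\Ric_g(\nu,\nu)=R_g-R_\Sigma-|\secondfund_\Sigma|^2$ ($\nu$ the unit normal of $\Sigma$), and the contact condition relates the induced mean curvature of a boundary face $\sigma_i^\epsilon$ inside $\Sigma$ to $H^M_{F_i^\epsilon}$ and to $\secondfund^M_{F_i^\epsilon}(\nu,\nu)$; feeding both into the second variation inequality and invoking $R_g>0$ and $H^M_{F_i^\epsilon}>0$ shows that a Schr\"odinger-type operator $-\Delta_\Sigma+\tfrac12 R_\Sigma$ on $\Sigma$, with a Robin boundary condition recording the face geometry, has positive principal eigenvalue. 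Conformally deforming by the corresponding positive eigenfunction (as in the Schoen--Yau and Gromov--Lawson arguments) then produces on $\Sigma$ a metric with positive scalar curvature and weakly mean convex faces, while keeping the dihedral angles acute, so that $(\Sigma,\tilde g)$ is an $(n-1)$-cube obeying (1)--(3). This closes the induction.

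I expect the main obstacle to be the boundary behavior of the minimizer: showing that the acute-angle and mean-convexity hypotheses genuinely act as geometric barriers, so that $\Sigma$ is regular up to $\partial\Sigma$, realizes the prescribed contact with $T$, and develops edges of the expected codimension instead of collapsing onto the singular set of $T$. This corner analysis is the genuinely new, polyhedral ingredient absent from the classical Schoen--Yau setting, and it is also where the restriction $n\le 7$ is used, through interior and free-boundary regularity of area minimizers. A second delicate point is the bookkeeping of the descent: one must choose the variational problem --- in particular the contact angle or weight --- and the conformal normalization on $\Sigma$ so that the boundary terms in the second variation cooperate with the mean convexity of the faces, and so that positivity of the scalar curvature, weak mean convexity of the faces, and acuteness of the dihedral angles all pass simultaneously to the next stage.
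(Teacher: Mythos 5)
Your proposal does not follow the argument this paper attributes for Theorem \ref{theo.cube.comparison.nonrigid}. The paper cites Gromov and describes his proof as a doubling construction: reflecting the cube $n$ times across its faces to produce a closed manifold dominating the torus, and then invoking the nonexistence of positive scalar curvature metrics on $T^n$; that argument is soft, works in every dimension, but (as the paper notes) cannot give rigidity and does not extend beyond fundamental-domain-type polytopes. What you sketch instead is the minimal-slicing dimension descent, which is essentially the strategy this paper develops for its main result (Theorem \ref{theo.dihedra.rigidity}): minimize area among hypersurfaces separating two opposite faces, use mean convexity of those faces plus the angle condition as barriers, and conformally deform the minimizer by the principal eigenfunction of the stability operator with Robin boundary condition so that (1)--(3) descend. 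So the route is legitimate and is in fact the one pursued here, but for the cube statement it proves strictly less than what is asserted: your argument is confined to $n\le 7$ (you say so yourself), while Theorem \ref{theo.cube.comparison.nonrigid} carries no dimension restriction; Gromov's doubling/torus argument covers all $n$.

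Beyond the dimension restriction, the step you flag as ``the main obstacle'' is a genuine gap, not a technicality: everything hinges on showing that the minimizer exists as a hypersurface that stays away from the two distinguished faces, meets the lateral faces in the prescribed way, and is regular ($C^{2,\alpha}$) up to the corners so that the eigenfunction $\varphi$ and the conformal deformation make sense there. This is exactly where the paper spends its technical effort: the strong maximum principle for free boundary varifolds in a polyhedron (Theorem \ref{theo.strong.maximum.principle}, whose proof requires constructing local foliations by mean convex free boundary leaves, Lemma \ref{lemma.existence.local.foliation}), the corner regularity theory of \cite{EdelenLi2019regularity} (Theorem \ref{theo.regularity.of.free.boundary.minimizing.surface}), and the Schauder theory in polyhedral domains of Appendix B, including the compatibility condition of Lemma \ref{lemma.compatibility.condition} needed for $C^{2,\alpha}$ regularity of the eigenfunction. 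Note also that the barrier roles are slightly misassigned in your sketch: it is the two faces being separated ($F^\pm$), which are mean convex and meet the lateral faces at angles at most $\pi/2$, that serve as barriers via the maximum principle, while along the lateral faces the minimizer has free boundary and it is the angle hypothesis there that feeds the regularity theory. Finally, the paper's slicing argument is run after normalizing the dihedral angles to the Euclidean model by Gromov's bending construction (Appendix A); if you keep strictly acute angles the free boundary condition and the boundary term bookkeeping must be replaced by a capillary (prescribed-angle) problem, whose regularity theory is precisely what the paper says is missing in general (Section 1.2). As written, then, the proposal is a plausible outline of the paper's method applied to cubes for $n\le 7$, but it neither reproduces the cited proof of the theorem in full generality nor supplies the corner analysis on which the alternative route stands.
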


The crucial observation is that conditions (2) and (3) may be interpreted as $C^0$ properties of the metric $g$. Thus, Gromov proposed a possible of definition of `$R\ge 0$' for $C^0$ metrics:
\begin{multline*}
    R(g)\ge 0 \Leftrightarrow \text{there exists no cube }M \\
    \text{ with mean convex faces and everywhere acute dihedral angle.}
\end{multline*}

Using Theorem \ref{theo.cube.comparison.nonrigid}, Gromov is able to deduce the following convergence result for scalar curvature. We remark that this result has been generalized by Bamler \cite{bamler2016ricci} using Ricci flow.

\begin{coro}
    Let $M^n$ be a smooth manifold, $g$, $g_k$, $k\ge 1$ be a sequence of smooth metrics on $M$. Suppose the scalar curvature of $g_k$ is nonnegative everywhere on $M$, and that $g_k\rightarrow g$ in $C^0$ as tensors. Then the scalar curvature of $g$ is also everywhere nonnegative.
\end{coro}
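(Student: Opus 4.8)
The plan is a proof by contradiction that localizes near a point and reduces to Theorem~\ref{theo.cube.comparison.nonrigid}. Suppose the scalar curvature of $g$ is negative somewhere, say $R_g(p_0)=-2\sigma<0$; by continuity $R_g<-\sigma$ on a small geodesic ball $B=B_r(p_0)$. The goal is to produce, for $k$ large, a cube $\Omega_k\subset B$ on which $g_k$ has nonnegative scalar curvature, (weakly) mean convex faces, and \emph{strictly} acute dihedral angles: this is impossible by (the rigidity refinement of) Theorem~\ref{theo.cube.comparison.nonrigid}, since that statement would force $(\Omega_k,g_k)$ to be isometric to a flat Euclidean cube, whose dihedral angles all equal $\pi/2$.

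The first ingredient is a small cube $\Omega_0\subset B$ that is \emph{strictly admissible} for $g$: each of its $2n$ faces is strictly mean convex with a uniform bound $H\ge H_0>0$, and every dihedral angle is $\le\pi/2-\theta_0$ for some $\theta_0>0$. This is the step where $R_g(p_0)<0$ is used decisively: a flat metric admits no such cube at all (by the cube rigidity theorem, strict acuteness already forces the right angles), so the strict negativity of the scalar curvature is exactly the ``slack'' that makes strictly mean convex faces compatible with strictly acute angles. The model case is two-dimensional and elementary: in a surface with $K<0$, Gauss--Bonnet shows that a small symmetric geodesic quadrilateral has all four angles strictly below $\pi/2$, with room to bow the sides slightly outward (making them strictly convex) while keeping the angles acute. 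In $n$ dimensions one starts from a small cube adapted to the geometry at $p_0$, prescribes a constant mean curvature $H_0\sim\sigma\delta$ on the faces (solving the graph equations with the edges as boundary data), and checks that the negative scalar curvature forces the dihedral angles to lie at $\pi/2-\theta_0$ with $\theta_0$ a positive multiple of $\sigma\delta^2$; distributing the scalar-curvature ``defect'' correctly among all the edges is itself a nontrivial point of this construction.

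The second ingredient — and the place I expect the main difficulty — is transferring strict admissibility from $g$ to $g_k$ for $k$ large; this is precisely where the observation that conditions (2) and (3) of Theorem~\ref{theo.cube.comparison.nonrigid} are $C^0$ properties of the metric must be cashed in. For the dihedral angles there is no problem: the angle between two fixed transverse hypersurfaces along their compact common edge is a continuous function of the metric restricted to that edge, so for $k$ large the angles of $(\Omega_0,g_k)$ are still $<\pi/2$. Mean convexity is the subtle point, since the mean curvature of a fixed hypersurface is \emph{not} continuous under $C^0$-convergence of metrics; the remedy is to not keep the faces of $\Omega_0$ fixed but to replace each by a $g_k$-area-minimizing hypersurface spanning the same boundary edge, thereby realizing mean convexity in the barrier/minimizing sense, which does survive $C^0$-convergence. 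Strict admissibility of $\Omega_0$ for $g$ is what supplies the barriers guaranteeing these minimizers exist, stay inside $B$, are pairwise disjoint, and converge to the faces of $\Omega_0$; the resulting Lipschitz domain $\Omega_k$ is combinatorially a cube, has $g_k$-minimal (hence weakly mean convex) faces, and—its edges never having moved—still has all dihedral angles $<\pi/2$. Since $R_{g_k}\ge0$ on $\Omega_k$, the cube rigidity theorem gives the contradiction, and since $p_0$ was arbitrary we conclude $R_g\ge0$ on $M$. Making this last barrier argument fully rigorous for merely $C^0$-convergent metrics—equivalently, verifying that the obstruction in Theorem~\ref{theo.cube.comparison.nonrigid} is genuinely a $C^0$ obstruction—is the technical crux; an alternative is to run Gromov's proof of Theorem~\ref{theo.cube.comparison.nonrigid} directly for the $C^0$ limit metric $g$.
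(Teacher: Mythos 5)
Your overall strategy is the intended one: the paper gives no proof of this corollary beyond attributing it to Gromov's deduction from Theorem~\ref{theo.cube.comparison.nonrigid} together with the remark that conditions (2) and (3) are to be read as $C^0$ properties, and your first ingredient (if $R_g(p_0)<0$ then a small cube exists near $p_0$ with strictly mean convex faces and strictly acute dihedral angles for $g$) is exactly Gromov's characterization; it is only sketched here, but it is a known construction. Two smaller inaccuracies in the reduction: Theorem~\ref{theo.cube.comparison.nonrigid} as stated requires \emph{strictly} positive scalar curvature, and the ``rigidity refinement'' you invoke is only available in this paper for $n\le 7$ (Theorem~\ref{theo.dihedra.rigidity}), while the corollary is dimension-free. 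Since your cube is strictly mean convex with strictly acute angles, the cleaner route is to perturb $g_k$ on the cube by a $C^2$-small conformal factor with superharmonic exponent so that $R>0$ while (2) and (3) remain strict, and then quote Theorem~\ref{theo.cube.comparison.nonrigid} as stated.

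The genuine gap is in the transfer step. First, barriers must be mean convex with respect to the metric in which one minimizes: the faces of $\Omega_0$ are strictly mean convex only for $g$, and under mere $C^0$ convergence you have no control on their $g_k$-mean curvature --- that is the very difficulty --- so they are not barriers for the $g_k$-Plateau problem, and nothing forces your $g_k$-minimizers to stay inside $B$, to be pairwise disjoint, or to converge to the faces of $\Omega_0$. The most $C^0$-closeness gives is that $g_k$-areas are $(1+\ep_k)$-comparable to $g$-areas, so $g_k$-minimizers are $g$-almost-minimizers and subconverge to $g$-area minimizers spanning the edges; these limits are not your strictly mean convex faces, and the strict angle information attached to $\Omega_0$ is lost in the process. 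Second, the claim that $\Omega_k$ ``still has all dihedral angles $<\pi/2$'' because the edges never moved is unjustified: the dihedral angle along an edge is determined by the tangent planes of the two adjacent faces there, and these change when the faces are replaced by minimizers; controlling the conormal of a minimizer along its prescribed codimension-two boundary is a corner-regularity and maximum-principle issue (compare the hypotheses needed in Sections 2--3 and Appendix B of the paper), and it is exactly where first-derivative information about the metric, not just $C^0$ information, enters. Third, you give no argument that the replaced faces bound a region with the combinatorics of a cube (adjacent minimizers could intersect away from the shared edge). So as written the second ingredient does not close, and your own closing sentence concedes the point: proving that the cube obstruction is genuinely $C^0$-stable is the content of the corollary, not a remark.
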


Gromov's proof (or at least a sketch of proof) of Theorem \ref{theo.cube.comparison.nonrigid} is based on an beautiful idea involving doubling cube $n$ times, and reduce the conjecture to the well-known fact that the $n$-dimensional torus admits no metric with positive scalar curvature. See \cite{LiMantoulidis2018positive} and \cite{Li2017polyhedron} for a detailed discussion. We note that a related technique has recently been explored by Kazaras \cite{Kazaras2019desingularizing} for the study of singular 4-manifolds. 

However, this idea leaves the following two questions open. First, it relies on the fact that the cube is the fundamental domain of $\ZZ^n$ action on $\RR^n$, hence is not applicable to general polytopes. Second, and perhaps more importantly, this argument cannot handle the rigidity statement in Conjecture \ref{conj:dihedral.rigidity}. In fact, even in the Euclidean space, it is unknown whether one can perturb a flat convex polyhedron, such that the faces are still minimal surfaces, while the dihedral angles between them remain the same. See section 1.5 of \cite{Gromov2014Dirac}.

In dimension $3$, Conjecture \ref{conj:dihedral.rigidity} is verified by the author \cite{Li2017polyhedron} for a large collection of polytopes, including the cubes and certain 3-simplices. The idea is to relate Conjecture \ref{conj:dihedral.rigidity} with a natural geometric variational problem of capillary type. The primary scope of this paper is to extend the idea in \cite{Li2017polyhedron} and prove Conjecture \ref{conj:dihedral.rigidity} for a general type of polyhedra, called \textbf{prisms}, of dimensions up to $7$. 

\begin{defi}\label{definition.euclid.prisms}
Let $n\ge 2$, and $P_0\subset \RR^2$ be an Euclidean polygon whose interior dihedral angles are all no larger than $\pi/2$. We call the polyhedron $P=P_0\times [0,1]^{n-2}$ a prism.
\end{defi}

Conjecturally the condition that all interior angles of $P_0$ do not exceed $\pi/2$ is technical- it guarantees sufficient regularity of solutions to certain elliptic equations, as will be seen from the paper. The primary objective of this paper are Riemannian polyhedra which admits a degree one map onto an Euclidean prism. Precisely, we make the following definition.

\begin{defi}\label{definition.riemann.prism}
We call a compact Riemannian manifold with boundary $(M,g)$ a polyhedron, if at every $x\in M$, there exists $r>0$ such that $(M\cap B_r(x),g)$ can be isometrically embedded to some $\RR^N$, and there is a diffeomorphism $\phi_x: B_r(x\in \RR^N)\to \RR^N$ with $\phi_x(B_r\cap M) = P\cap B_1(0^N)$ for some Euclidean polyhedal cone $P$. Further, we require that $\phi_x$ is $C^{2,\alpha}$ for some $\alpha\in (0,1)$ independent of $x$. 

If $P^n$ is an Euclidean prism and $M^n$ is a polyhedron, we call $M$ is over-prism of type $P$, if it admits a degree one map $\Phi$ onto $P$, such that for each integer $k\in [0,n]$ and on each $k$-face of $M$, the restriction of $\Phi$ is also a degree one map to a $k$-face of $P$.
\end{defi}

Some examples of over-prism polyhedrons include the interior connected sums of $P$ with a smooth manifold, as well as the manifold obtained by applying a surgery along a homotopically trivial knot in the interior of $P$ (see Section 3 of \cite{BoileauWang1996degree}). We now state the main result of this paper.

\begin{theo}\label{theo.dihedra.rigidity}
Let $2\le n\le 7$, $P^n$ be an Euclidean prism. Assume $M^n$ is a polyhedron, over-prism of type $P$. Then Conjecture \ref{conj:dihedral.rigidity} holds for $M$. Precisely, if $(M,g)$ is a Riemannian polyhedron such that
\begin{enumerate}
    \item The scalar curvature of $g$ is nonnegative;
    \item Each face of $M$ is weakly mean convex;
    \item The dihedral angles between adjacent faces of $M$ is everywhere equal to the corresponding (constant) dihedral angles of $P$.
\end{enumerate}
Then $(M,g)$ is isometric to an Euclidean prism.
\end{theo}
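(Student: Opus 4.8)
The plan is to adapt the capillary free boundary minimal hypersurface technique of \cite{Li2017polyhedron} and combine it with a Schoen--Yau style dimension descent. I begin by setting up the variational problem. Inside the Riemannian over-prism $(M^n,g)$, label the faces coming from $\partial P_0 \times [0,1]^{n-1}$ as the ``vertical'' faces and the two faces $P_0 \times \{0\}$, $P_0\times\{1\}$ as the ``horizontal'' faces. I want to produce a hypersurface $\Sigma^{n-1}\subset M$ that (a) separates the two horizontal faces (in the homology class dual to the last coordinate of $\Phi$), (b) meets each vertical face orthogonally (a free boundary condition), and (c) meets each horizontal face at the dihedral angle prescribed by $P$ -- for a Euclidean prism these are right angles, so in fact $\Sigma$ should be free boundary along \emph{all} faces it touches. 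One minimizes area (or an appropriate capillary functional, $\Area(\Sigma) - \sum \cos\theta\, \Area(\text{wetted region on faces})$, to enforce the prescribed contact angles when the model angles are not $\pi/2$) among all such separating hypersurfaces. The hypotheses (2) and (3) on mean convexity of faces and on the dihedral angle comparison are exactly the barrier conditions that confine the minimizer away from the lower-dimensional strata of $\partial M$ and force it, when it does touch a face, to do so at the correct contact angle; for $n\le 7$ the minimizer is smooth in the interior and smooth up to the faces away from the edges, with the angle constraint on $P_0$ (all $\le\pi/2$) guaranteeing the elliptic regularity near the edges.

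Next I derive the stability-type inequality. Since $\Sigma$ minimizes, the second variation is nonnegative; plugging in the first eigenfunction and using the Gauss equation together with $R_g\ge 0$, the mean convexity of the faces (which controls the boundary term from the free boundary condition via the second fundamental form of $\partial M$), and the dihedral angle inequality (which controls the corner/contact-angle contributions), I obtain that $\Sigma$, with its induced metric, carries a metric of ``nonnegative scalar curvature in the sense of being a stable free-boundary minimal slice'' -- more precisely one gets the differential inequality that feeds the next descent step, namely that $\Sigma$ is itself an over-prism of type $P_0\times[0,1]^{n-2}$ satisfying the analogues of (1)--(3), possibly after a conformal change absorbing the Schoen--Yau potential term. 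I would set this up as an induction on $n$: the base case $n=2$ is Gauss--Bonnet as noted in the introduction (a surface over-prism of type $P_0$ with $K\ge 0$, geodesically-or-less-curved boundary arcs, and angles $\le$ the model forces flatness), and each descent step reduces $n$ by one while preserving the prism structure, until one lands at $n=2$. The rigidity statement then propagates back up: equality must have held at every stage, which forces $R_g\equiv 0$, each face totally geodesic, each dihedral angle exactly equal to the model, and -- running the infinitesimal rigidity in the stability inequality -- the normal direction to $\Sigma$ to be parallel, so that $(M,g)$ splits isometrically and is flat, hence a Euclidean prism.

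The main obstacle, and where I expect most of the technical work to go, is the regularity and confinement of the capillary minimizer near the \emph{edges} of the polyhedron -- the codimension-two strata where a vertical face meets a horizontal face, and especially the vertical edges coming from vertices of $P_0$. Two things must be shown: first, existence of a minimizer in the right class with the free-boundary/contact-angle conditions realized in a strong enough sense to run the second variation (this requires a suitable barrier or maximum-principle argument showing $\Sigma$ cannot ``escape'' into an edge, using precisely hypotheses (2) and (3)); and second, that near each edge the minimizer is regular enough -- $C^{1}$ up to the edge, say -- for the boundary and corner terms in the second variation formula to make sense and have the right sign. This is exactly the point where the hypothesis that $P_0$ has all interior angles $\le \pi/2$ enters: it ensures the relevant mixed Neumann problem on a wedge has no low-regularity singular solutions, so the minimizing hypersurface meets the edge cleanly. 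Handling the descent when the model contact angle is a genuine angle (not $\pi/2$), i.e. keeping track of the capillary terms through the induction, is a secondary but nontrivial bookkeeping burden; the cleanest route is probably to observe that for a Euclidean \emph{prism} all the relevant dihedral angles that $\Sigma$ sees are right angles, so the functional is plain area with free boundary, and the $\le\pi/2$ condition on $P_0$ only affects edge regularity, not the form of the functional.
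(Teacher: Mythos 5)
Your set-up (area minimization among hypersurfaces separating the two horizontal faces, confinement via the mean convexity and angle hypotheses, edge regularity from the interior-angle-$\le\pi/2$ condition on $P_0$, stability plus a conformal change of the induced metric, and induction down to the Gauss--Bonnet case $n=2$) is essentially the route the paper takes, and your observation that for a prism the relevant contact angles are all $\pi/2$, so the functional is plain area with free boundary, is correct. Two omissions, one minor and one serious. The minor one: the paper first applies a bending/normalization (Gromov's construction, Appendix A) so that the dihedral angles of $(M,g)$ are everywhere \emph{equal} to the model angles; this is what makes $F_T,F_B$ legitimate orthogonal barriers for the strong maximum principle and keeps the bookkeeping of angles exact through the descent, and your proposal never addresses how to pass from ``$\le$'' to ``$=$''.

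The serious gap is the rigidity endgame. You assert that tracing equality ``forces the normal direction to $\Sigma$ to be parallel, so that $(M,g)$ splits isometrically and is flat.'' Equality in the stability/conformal-descent inequalities only yields \emph{infinitesimal} rigidity along the single slice $\Sigma$: $R_M=0$, $A_\Sigma=0$, $\Ric_M(\nu,\nu)=0$ on $\Sigma$ and $\secondfund_{\partial M}(\nu,\nu)=0$ on $\partial\Sigma$, and that $(\Sigma,g_1)$ is a flat prism. This says nothing about $g$ away from $\Sigma$, and there is no global splitting handed to you: one either has to construct a foliation by such slices or otherwise propagate flatness into the bulk. The paper does this by extending the Carlotto--Chodosh--Eichmair perturbation scheme: deform $g$ conformally in a small ball off $\Sigma$ (making scalar curvature strictly positive in an annulus while keeping $\Sigma$ weakly mean convex as a barrier), re-solve the minimization problem for each perturbed metric, and pass to a limit to produce a \emph{new} free boundary minimizer through a prescribed region; iterating gives a dense family of infinitesimally rigid slices, and a lapse-function argument (as in Liu's work) then gives $Rm_M(\nu,X,Y,\nu)=0$ and eventually $Rm_M\equiv 0$ and totally geodesic faces. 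This step requires a genuinely new ingredient your proposal does not anticipate: uniform curvature estimates (with a Bernstein-type theorem in the model wedge $W_0\times[0,\infty)^{k-2}\times\RR^{n-k}$) for free boundary area minimizers in a Riemannian polyhedron, needed for the $C^{2,\alpha}$ compactness of the perturbed minimizers. Without some replacement for this machinery, the final implication ``equality at every stage $\Rightarrow (M,g)$ is a Euclidean prism'' does not follow.
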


In \cite[Section 11]{Gromov2018Metric}, Gromov claimed and sketched a so-called bending construction on Riemannian polyhedrons. In particular, suppose $(M,g)$ is a Riemannian polyhedron, over-prism of type $P$, and $\Phi: M \to P$ is a degree one polyhedral map. One may deform $g$ to $\tilde g$ by bending the faces inward in a neighborhood of the edges, such that $(M,\tilde g)$ satisfies that:
\begin{itemize}
	\item $R(\tilde g)\ge R(g)$ in $M$;
	\item $H(\tilde g)\ge H(g)$ on each face of $M$;
	\item The dihedral angles of $(M,\tilde g)$ is everywhere equal to those of $P$.
\end{itemize}
See more discussions in Appendix \ref{appendix.section.bending.construction}. Granted this construction, one may replace assumption (3) in the statement of Theorem \ref{theo.dihedra.rigidity} to the weaker one:
\begin{enumerate}
	\item[(3')] The dihedral angles between adjacent faces of $M$ is everywhere\textit{ less than or equal} to the corresponding (constant) dihedral angles of $P$.
\end{enumerate}

As a side remark, we can combine this bending construction with Theorem 1.4 and 1.5 in \cite{Li2017polyhedron}, and establish Conjecture \ref{conj:dihedral.rigidity} for $3$-dimensional cones and prisms. In particular,

\begin{theo}\label{theo.for.3.simplices}
	Conjecture \ref{conj:dihedral.rigidity} holds for all 3-dimensional simplices $P$, such that $P$ has a face $B$ where the three dihedral angles along $B$ are all non-obtuse.
\end{theo}

It would be interesting to contextualize Theorem \ref{theo.dihedra.rigidity} in the study of scalar curvature on smooth manifolds with boundary. Following the positive mass theorem by Schoen-Yau \cite{SchoenYau1979ProofPositiveMass} and Witten \cite{Witten1981newproof}, Shi-Tam \cite{ShiTam2002positivemass} established a comparison theorem for smooth convex regions in $\RR^3$. See also (\cite{Miao02,EichmairMiaoWang2012extension}). Precisely, let $\Omega\subset \RR^3$ is a convex domain, and $(\Omega_0,g)$ is smooth Riemannian manifold with $R(g)\ge 0$, such that $\partial\Omega$ and $\partial \Omega_0$ are isometric with induced metrics. Then
\begin{equation}\label{equation.intro.shi.tam}
    \int_\Omega H dA - \int_{\Omega_0} H_0 dA\ge 0,
\end{equation}
where $H, H_0$ are the mean curvatures of $\Omega$, $\Omega_0$, respectively. The quantity on the left in \eqref{equation.intro.shi.tam} is the Brown-York quasi-local mass of $\partial \Omega$. See \cite{LiuYau2006positivity,WangYau2009isometric} for more discussions. The deep phenomenon illustrated by these results is that for a smooth Riemannian manifold with boundary $(\Omega,g)$, $R(g)$ in $\Omega$ and $H(g)$ on $\partial \Omega$ are a pair of quantities that behave oppositely. 

Theorem \ref{theo.dihedra.rigidity} is an extension of the above observation to Riemannian polyhedron $M^n$. In fact, if we view $\partial M$ as a varifold, then its total first variation has support on its faces as well as on its edges (intersection of adjacent faces), but not on any lower dimensional singular strata of $\partial M$. Moreover, if $F_1,F_2$ are two adjacent faces, then along $E=F_1\cap F_2$, the singular mean curvature of $\partial M$ is equal to $(\pi-\measuredangle(F_1,F_2))\delta_{E}$. We therefore observe that Theorem \ref{theo.dihedra.rigidity} precisely states that one cannot increase the scalar curvature of an Euclidean prism, while increase its boundary mean curvature the same time.

We remark that dihedral angle deficit also relates to the positive mass theorem. In a recent paper \cite{miao2019measuring}, Miao gave an explicit formula of the ADM mass of an asymptotically flat 3-manifold in terms of the interior scalar curvature, boundary mean curvature and the dihedral angle deficit of large coordinate cubes. Notably, if the conditions (1)-(3) in Theorem \ref{theo.dihedra.rigidity} are all satisfied for large geodesic cubes, the mass is non-positive. This result is inspired by a previous work of Stern \cite{stern2019scalar} (see also \cite{bray2019harmonic,bray2019scalar}), and (for now) only works in 3 dimensions. In section 5, we will see that Theorem \ref{theo.dihedra.rigidity} gives a rapid proof of the positive mass theorem, thus may be regarded as a localized positive mass theorem. It would be interesting to further study the connection between Conjecture \ref{conj:dihedral.rigidity} and mass/quasi-local mass in future.

\subsection{An outline of the proof}
From now on we fix an Euclidean prism $P^n=P_0^2\times [0,1]^{n-2}$. Denote by $g_{Euclid}$ the Euclidean metric on $P$. Suppose $(M^n,g)$ is a Riemannian polyhedron, over-prism of type $P$. Let $\Phi:M\rightarrow P$ be the degree one map. Inspired by the classical dimension descent technique of Schoen-Yau, we perform induction on the dimension $n$. When $n=2$, the Gauss-Bonnet theorem implies that
\begin{equation}\label{equation.introduction.GB}
    \int_M K dA +\int_{\partial M} k_g ds + \sum_{V\text{ is a vertex}}(\pi-\measuredangle(V))=2\pi\chi(M).
\end{equation} 
Since $M$ admits a degree one map onto $P$ and the sum of exterior angles of $P$ is $2\pi$, we conclude that the sum of exterior angles of $M$ is at least $2\pi$. On the other hand, since $M$ is connected, $\chi(M)\le 1$. Thus, \eqref{equation.introduction.GB} implies that $K=0$ in $M$ and $k_g=0$ on $\partial M$. Hence $M$ is isometric to a planar polygon.

Now suppose $n\ge 3$. We denote 
\begin{equation*}
    \begin{split}
    F_T=\Phi^{-1}(P_0\times [0,1]^{n-3}\times \{1\}),\quad F_B=\Phi^{-1}(P_0\times [0,1]^{n-3}\times \{0\}),\\ 
    F_L=\partial M\setminus (F_T\cup F_B),    
    \end{split}
\end{equation*}
and call the faces $F_T,F_B,F_L$ top, bottom and lateral faces of $M$. Consider the variational problem
\begin{equation}\label{problem.variation}
    I=\inf_{\Omega\in \cC}\{\cH^{n-1}(\partial \Omega\cap \mathring{M})\}
\end{equation}
where $\Omega$ is taken from the collection
\begin{multline*}
    \cC=\bigg\{\Omega=\Phi^{-1}(\tilde{\Omega}): \tilde{\Omega} \text{ is an open set of } P, P_0\times [0,1]^{n-3}\times\{1\}\subset \tilde{\Omega},\\
    \left(\tilde{\Omega}\cap P_0\times [0,1]^{n-3}\right) \times\{0\}=\emptyset\bigg\}.
\end{multline*}

Let $\Sigma=\partial \Omega\cap \mathring{M}$. In other words, we consider the least area hypersurface $\Sigma$ separating the top faces $F_T$ with the bottom face $F_B$. By the Federal-Fleming compactness theorem, the minimizer of \eqref{problem.variation} is obtained by an integral current $\Omega$, and $\partial \Omega$ is also integral. Also, if $\Omega\in \cC$, then $\Phi|_{\partial \Omega\cap \mathring{M}}$ is a degree one map onto $P_0\times [0,1]^{n-3}$. Since current convergence preserves homology type, we conclude that the minimizing hypersurface $\Sigma$ also has a degree one map onto $P_0\times [0,1]^{n-3}$. Hence, $ (\Sigma,g)$ is a Riemannian polyhedron, over-prism of type $P_0\times [0,1]^{n-3}$.

A great challenge is to establish suitable regularity for the hypersurface $\Sigma$. Notably, the regularity theory of free boundary minimal surface in general non-smooth domains is far from being understood. In our setting, this is done in two steps. First, we extend a weak boundary maximum principle by Li-Zhou \cite{LiZhou2017maximum} for free boundary varifolds in smooth manifolds, to a strong maximum principle in our polyhedron. The conclusion is that the minimizer $\Sigma$ is either disjoint from $F_T$ and $F_B$, or entirely lies in $F_T$ or $F_B$. Then we extend the regularity theory developed in \cite{EdelenLi2019regularity} and prove that $\Sigma$ is a $C^{2,\alpha}$ hypersurface, when $n\le 7$.

The crucial observation is that, after a conformal deformation, $\Sigma$ will be a Riemannian polyhedron satisfying conditions (1)-(3) of Theorem \ref{theo.dihedra.rigidity}. By repeating the argument, we construct a slicing of $M$ by free boundary minimal hypersurfaces
\[\Sigma^2\subset \cdots\subset \Sigma^{n-1}\subset M\]
where each $\Sigma_j$ is a $j$-dimensional Riemannian polyhedron, over-prism of type $P_0\times [0,1]^{j-2}$, and conditions (1)-(3) are satisfied. By induction, each $\Sigma_j$ is isometric to a flat Euclidean prism.

Finally we perform the rigidity analysis. Rigidity phenomenon in scalar curvature has caught lots of attentions in recent years, see \cite{CaiGallowway2000rigidity,BrayBrendleNeves2010rigidity,ChodoshEichmairMoraru2018splitting} and the survey article \cite{Brendle2012rigidity}. We extend an elegant idea of Carlotto-Chodosh-Eichmair \cite{CarlottoChodoshEichmair2016effective} in the study of effective positive mass theorems, which was inspired on earlier works on deformations of metrics with other curvature conditions (see \cite{Ehrlich1976metricdeformation,AndersonRodriguez1989minimal,liu2013three-manifolds}). Precisely, through an argument involving a family of well-chosen local conformal deformations, we prove that $M$ contains a dense collection of free boundary area minimizing surfaces, whose boundaries are also dense on $\partial M$. This is enough to guarantee that $M$ is isometric to an Euclidean prism. To carry out the argument, we establish new curvature estimates of free boundary minimal hypersurfaces in a Riemannian polyhedron, inspired by previous work by Guang-Li-Zhou \cite{GuangLiZhou2016curvature}. 

\subsection{Some future perspectives}
We speculate that the minimal slicing strategy should be able to prove Conjecture \ref{conj:dihedral.rigidity} for more polytopes. Precisely, we define a class of $n$-dimensional Euclidean polyhedron $\cP_n$ as follows. Let $\cP_2$ be the collection of all convex polygons. Given $\cP_{n-1}$, defined $\cP_n$ be the set of $n$-dimensional Euclidean polyhedra $P$, such that $P$ has a topological foliation where all leaves are parallel and are similar to a fixed element $P_{n-1}\subset \cP_{n-1}$ \footnote{That is, each leaf is congruent up to scaling, but not rotation, to an $(n-1)$ dimensional polyhedron $P_{n-1}$.}. In particular, the prisms in Definition \ref{definition.euclid.prisms} as well as all $n$-dimensional simplices are in $\cP_n$. We now describe a heuristic argument that would imply conjecture \ref{conj:dihedral.rigidity} for a polyhedron $P\in \cP_n$.

For the sake of simplicity, let us assume $P$ is a simplex. Fix a vertex $v$ of $P$. Let $F_B$ be the opposite face of $v$ in $P$. Denote other faces of $P$ by $\{F_j\}_{j=1}^k$, and the interior angle between $F_j$ and $F_B$ by $\gamma_j$. \footnote{In the case of a prism, each $\gamma_j=\pi/2$.} Consider the variational problem
\begin{equation}\label{problem.variation.capillary}
    I=\inf\left\{\cH^{n-1}(\partial \Omega\cap \mathring{M})-\sum_{j=1}^k\cos\gamma_j \cH^{n-1}(\partial \Omega\cap \Phi^{-1}(F_j))\right\}
\end{equation}
for Caccioppoli sets $\Omega$ of $M$, in the class $\cC$ as before. The minimizer of \eqref{problem.variation.capillary} $\Omega$ gives a capillary minimal hypersurface $\Sigma=\Omega\cap \mathring{M}$. Inductively one obtains a slicing of $M$ by capillary minimal hypersurfaces
\[\Sigma^2\subset\cdots\subset\Sigma^{n-1}\subset M\]
and thus by applying the very same geometric argument as this paper, one proves Conjecture \ref{conj:dihedral.rigidity}.

The entire difficulty is the lack of regularity theory for capillary surfaces. De Philippis-Maggi \cite{DePhilippisMaggi15regularity} has the best partial regularity for capillary energy function so far, which states that when the domain is $C^{1,1}$, energy minimizing capillary hypersurface has singularities of codimension at least $2$. Conjecturally this is not sharp. We also note that the recent progress by Schoen-Yau \cite{SchoenYau2017positive} suggests that one may be able to carry out the slicing argument with presence of singularities. These are interesting questions to investigate in future.

\textbf{Acknowledgement:} The author wishes to thank Rick Schoen, Brian White, Otis Chodosh and Nick Edelen for various helpful conversations that are important for several stages of this project, as well as Misha Gromov, Fernando Marques and Michael Eichmair for their interests in this work. Special thanks go to Christina Sormani for organizing \textit{Emerging Topic on Scalar Curvature Seminar}, from which the author learned a lot. Part of this project was developed at the Institute for Advanced Study during the special year\textit{ Variational Methods in Geometry}. The author would like to acknowledge the hospitality of the IAS and the excellent working conditions.

\section{Preliminaries of Riemannian polyhedron and free boundary minimal surfaces}
In this section we review some preliminary facts on the Riemannian geometry in a polyhedron, and on free boundary minimal surface in domains with piecewise smooth boundary. We also include a regularity theorem for free boundary minimal hypersurfaces in polyhedron domains.

\subsection{Local geometry of a Riemannian polyhedron}
Let $M^{n}$ be a Riemannian polyhedron such that any two adjacent faces meet at constant angle along their intersection. Let $p\in \partial M$ lying on $k$ faces, $F_i$, $i=1,\cdots,k$. Thus, a local neighborhood of $p$ in $M$ is diffeomorphic to the region $\{(x_1,\cdots,x_n): x_1^2+\cdots+x_n^2<r^2, x_j\ge 0, j=1,\cdots,k\}$, where under this diffeomorphism, the face $F_j$ got mapped onto $\{x_j=0\}$. Denote the interior angle between $F_i$ and $F_j$ by $\gamma_{ij}$. We recall the following basic facts of Riemannian geometry, whose proof is a simple generalization of Lemma 2.1 in \cite{LiZhou2017maximum}.

\begin{lemm}\label{lemma.local.foliation.1}
    There exists a constant $\delta>0$, a neighborhood $U$ of $p$ in $M$, and a foliation $F_1^{s}$ with $s\in [0,\delta)$, of $U$, such that $F_1^{0}=F_1\cap U$, and each $F_1^s$ meets $F_j$ at constant angle $\gamma_{1j}$, $j=1,\cdots,k$.
\end{lemm}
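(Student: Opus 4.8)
The plan is to reduce the statement to the construction of a single auxiliary function near $p$ and then to produce that function. To give a foliation $\{F_1^s\}_{s\in[0,\delta)}$ of a neighborhood $U$ of $p$ with $F_1^0=F_1\cap U$ is the same as to give a submersion $u\colon U\to[0,\delta)$ with $u^{-1}(0)=F_1\cap U$ and $\nabla u\neq 0$, the leaves being $F_1^s=u^{-1}(s)$. Since the unit normal of a level set $\{u=s\}$ is $\nabla u/|\nabla u|$, the requirement that $F_1^s$ meet $F_j$ at the constant angle $\gamma_{1j}$ along $F_1^s\cap F_j$ translates into the pointwise oblique condition
\[
\langle\nabla u,\nu_j\rangle=-(\cos\gamma_{1j})\,|\nabla u|\qquad\text{on }F_j,\quad j=2,\dots,k,
\]
where $\nu_j$ is the inward unit normal of $F_j$; the sign is the one forced by consistency with $j=1$, since along the edge $F_1\cap F_j$ one has $\nabla u\parallel\nu_1$ and $\langle\nu_1,\nu_j\rangle=-\cos\gamma_{1j}$ by definition of the dihedral angle. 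Decomposing $\nabla u=(\partial_{\nu_j}u)\,\nu_j+\nabla^{F_j}u$ with $\nabla^{F_j}u$ tangent to $F_j$, this is the capillary-type condition $\partial_{\nu_j}u=-(\cot\gamma_{1j})\,|\nabla^{F_j}u|$, which for $\gamma_{1j}=\pi/2$ degenerates to the homogeneous Neumann condition $\partial_{\nu_j}u=0$, i.e. orthogonality — exactly the case treated in \cite[Lemma 2.1]{LiZhou2017maximum}. Thus it suffices to exhibit one $C^{2,\alpha}$ function $u$ near $p$ with $u^{-1}(0)=F_1$, $\nabla u(p)\neq0$, solving these $k-1$ boundary conditions.

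Next I would build $u$ as a perturbation of $v:=\dist(\cdot,F_1)$, which already satisfies $v^{-1}(0)=F_1$, $\nabla v=\nu_1$ on $F_1$, $|\nabla v|=1$ near $F_1$, and hence meets each oblique condition \emph{along the edges} $F_1\cap F_j$. Writing $u=v+w$ with $w$ vanishing on $F_1$ (so $u^{-1}(0)=F_1$ and $\nabla u(p)=\nu_1(p)\neq0$), I must choose $w$ so that the $k-1$ conditions hold on the full faces $F_2,\dots,F_k$. In coordinates $(x_1,\dots,x_n)$ near $p$ with $F_j=\{x_j=0\}$ and $M=\{x_j\ge0:1\le j\le k\}$, each condition is a smooth, uniformly oblique scalar equation for $\partial_{\nu_j}w$ near $p$ — its nonlinearity $|\nabla^{F_j}u|$ stays bounded away from $0$, being $\approx\sin\gamma_{1j}$ — so one may prescribe $w$ along $F_1$ together with these oblique conditions and, say, $\Delta_g w=0$ (or any convenient interior equation, or simply extend the data by a Whitney-type formula) and solve for $w$ in a small neighborhood. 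The one genuine point is compatibility of the boundary data along the edges: along $F_1\cap F_j$ it is automatic by the constancy of the dihedral angle and the sign chosen above, while along an edge $F_i\cap F_j$ with $i,j\ge2$ the two oblique conditions constrain the single vector $\nabla u$ by two scalar equations, which for $n\ge3$ leaves enough freedom and is met at $p$ by $\nabla u=\nu_1$; this is propagated along the edge. Once $u$ is in hand, the implicit function theorem produces $\delta>0$, a neighborhood $U$ of $p$, and the foliation $F_1^s=u^{-1}(s)$, $s\in[0,\delta)$, with the asserted properties; shrinking $U$ and $\delta$ if necessary finishes the argument.

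The main obstacle is the local solvability and boundary regularity used in the second step: in contrast with the smooth-boundary situation of \cite{LiZhou2017maximum}, the domain has edges and an oblique (capillary-type) condition is prescribed on several of the faces meeting there, so one must verify that these conditions are mutually compatible along the edges and that the resulting $u$ is $C^{2,\alpha}$ up to the corner. It is here that, as elsewhere in the paper, the restriction $\gamma_{1j}\le\pi/2$ and the constancy of the dihedral angles of $M$ enter; the remainder is a direct transcription of the free-boundary construction in \cite[Lemma 2.1]{LiZhou2017maximum}.
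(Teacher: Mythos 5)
Your reduction of the lemma to producing a single defining function $u$ with $u^{-1}(0)=F_1\cap U$, $\nabla u\ne 0$, and the oblique condition $\bangle{\nabla u,\nu_j}=-\cos\gamma_{1j}\,|\nabla u|$ on $F_j$ is a correct reformulation, but the proof has a genuine gap exactly where you place "the main obstacle": you never actually produce such a $u$. What you propose is to solve, near a point lying on $k$ faces, a mixed boundary value problem with Dirichlet data on $F_1$, nonlinear capillary-type oblique conditions $\pa{w}{\nu_j}=-\cot\gamma_{1j}\,|\nabla^{F_j}(v+w)|$ on $F_2,\dots,F_k$, and some interior equation, and you need the solution to be $C^{2,\alpha}$ up to the edges and the corner with $\nabla u$ nonvanishing. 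Existence and corner regularity for such mixed oblique problems in polyhedral domains is not standard: it is not supplied by \cite[Lemma 2.1]{LiZhou2017maximum}, which concerns a smooth boundary and is itself a coordinate (Fermi/graph) construction rather than a solved boundary value problem, and it is not covered by the regularity theory in Appendix B of this paper, which treats linear Neumann-type conditions with a special structure of the coefficients and angles $\le\pi/2$; indeed the absence of a regularity theory for genuinely oblique (capillary) conditions is precisely what the paper identifies as the obstruction to handling general polytopes. Similarly, the compatibility of the two oblique conditions along an edge $F_i\cap F_j$ with $i,j\ge 2$ is asserted ("this is propagated along the edge") but not proved. So, as written, the argument defers its essential step to machinery that is at least as delicate as the lemma itself.

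The paper's own proof is much softer and solves no PDE at all: for $j=2,\dots,k$ take $x_j$ to be the signed distance to $F_j$, complete these with Fermi coordinates $(x_1,x_{k+1},\dots,x_n)$ for the submanifold $\bigcap_{j\ge 2}F_j$, so that $\partial_j$ is normal to $F_j$; in these adapted coordinates write $F_1$ locally as a graph $x_1=f(x_2,\dots,x_n)$, noting that the constancy of the dihedral angles gives $f=0$ on $\partial F_1$ and $\pa{f}{x_j}=0$ along $F_1\cap F_j$; then the leaves are simply the translated graphs $x_1=f(x_2,\dots,x_n)+s$, $s\in[0,\delta)$, which automatically meet $F_2,\dots,F_k$ at the required constant angles because the defining data of the graph is unchanged by the translation. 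If you wish to keep your formulation via a submersion, the fix is to define $u$ directly from this coordinate construction (the leaf parameter $s$), rather than trying to obtain it as the solution of an oblique boundary value problem.
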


By adapting Lemma \ref{lemma.local.foliation.1} to each face $F_j$, $j=1,\cdots,k$, and possibly shrink the neighborhood, we find, for each $j=1,\cdots,k$, a foliation $\{F_j^s\}_{s\in [0,\delta)}$, such that $F_j^0=F_j\cap U$, and for each pair $i\ne j$, $F_i^s$ meets $F_j$ at constant angle $\gamma_{ij}$. For each $j=1,\cdots,k$ and a point $q\in U$, define the function $x_j(q)=s$ if and only if $q\in F_j^s$. Then extend $\{x_j\}_{j=1}^k$ to $\{x_j\}_{j=1}^n$, such that the latter gives a diffeomorphism of $U$ to a region in $[0,\infty)^k\times \RR^{n-k}$. Therefore, we have the following lemma, which is a slight generalization to the existence of local Fermi coordinates on smooth manifolds with boundary.

\begin{lemm}\label{lemma.local.foliation.2}
    Let $(M,g)$ be a Riemannian polyhedron whose adjacent faces meet at constant angle along their intersection. For any $p\in \partial M$ where $p$ lies on $k$ different faces $F_1,\cdots,F_k$, there is a local coordinate system $\{x_j\}_{j=1}^n$, such that $F_j$ is given by $\{x_j=0\}$, $j=1,\cdots,k$ and the metric satisfies $g(\partial_i,\partial_j)=-\cos \gamma_{ij}$, $g(\partial_i,\partial_i)=1$ on $F_i$, for $i\ne j$, $1\le i,j\le k$.
\end{lemm}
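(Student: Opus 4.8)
The plan is to construct the coordinate system $\{x_j\}$ in two stages, exactly as foreshadowed in the paragraph preceding the statement: first produce, for each face $F_j$ through $p$, a local foliation $\{F_j^s\}_{s\in[0,\delta)}$ of a neighborhood $U$ of $p$ by hypersurfaces with $F_j^0 = F_j\cap U$ and such that $F_j^s$ meets every other face $F_i$ ($i\ne j$) at the prescribed constant angle $\gamma_{ij}$, and then set $x_j(q)=s$ whenever $q\in F_j^s$. The existence of these foliations is precisely Lemma \ref{lemma.local.foliation.1} applied to each face in turn (with the neighborhood $U$ shrunk finitely many times so that all $k$ foliations are simultaneously defined). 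One must check that the leaves of distinct foliations are transverse near $p$ — this holds because at $p$ the leaf $F_j^s$ has conormal making angle $\gamma_{ij}$ with the conormal of $F_i^t$, and these $k$ conormals are linearly independent (the Gram matrix $(-\cos\gamma_{ij})$ with $1$'s on the diagonal is positive definite, being the Gram matrix of the outward conormals of the Euclidean model corner) — so after shrinking $U$ once more, $(x_1,\dots,x_k)$ together with a complementary Fermi-type chart $(x_{k+1},\dots,x_n)$ on $\bigcap_{j=1}^k F_j$ gives a diffeomorphism of $U$ onto a region in $[0,\infty)^k\times\RR^{n-k}$. By construction $F_j=\{x_j=0\}$ for $j\le k$.

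It remains to verify the metric identities on the faces. Fix $i\le k$ and a point $q\in F_i$. Since $F_i = \{x_i=0\}$ and $x_i$ is, by construction, (a smooth reparametrization of) the signed distance to $F_i$ measured along the foliation $\{F_i^s\}$, the coordinate vector field $\partial_i$ restricted to $F_i$ is the unit normal to $F_i$; hence $g(\partial_i,\partial_i)=1$ on $F_i$. For the mixed term: at $q\in F_i$, for $j\ne i$ with $j\le k$, the vector $\partial_j$ is tangent to the leaf $F_j^{x_j(q)}$ and normal within it to the slice $F_j^{x_j(q)}\cap F_i^s$; meanwhile $\partial_i$ is the unit normal to $F_i$. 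The angle between the hypersurface $F_i$ and the leaf $F_j^{x_j(q)}$ is $\gamma_{ij}$ by the defining property of the foliation, and a short computation with conormals (as in the proof of Lemma 2.1 of \cite{LiZhou2017maximum}) shows that in these adapted coordinates this forces $g(\partial_i,\partial_j) = -\cos\gamma_{ij}$ along $F_i$, the sign being dictated by the orientation convention that $x_i, x_j > 0$ point into $M$ (so the interior dihedral angle, not its supplement, appears). One does this computation on $F_i\cap F_j$ first, where both $\partial_i\perp F_i$ and $\partial_j\perp F_j$, and then propagates it along $F_i$ using that $\partial_j$ stays normal to the slices $F_j^{\cdot}\cap F_i^{\cdot}$.

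The main obstacle is the bookkeeping needed to guarantee the compatibility of the $k$ foliations simultaneously — i.e., that one neighborhood $U$ and one chart accommodates all of them with the transversality and the coordinate-hyperplane descriptions $F_j=\{x_j=0\}$ holding at once — together with pinning down the correct sign and the fact that the identities hold on all of $F_i$ and not merely on the edge $F_i\cap F_j$. Neither step is deep: the first is a finite induction on $j$ shrinking $U$, using the positive-definiteness of the model Gram matrix for transversality; the second is the observation that the foliation property "$F_j^s$ meets $F_i$ at constant angle $\gamma_{ij}$" is exactly the statement that $g(\partial_i,\partial_j)=-\cos\gamma_{ij}$ on $F_i$ in these coordinates. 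Since the lemma is explicitly billed as "a slight generalization" of the standard existence of Fermi coordinates on a smooth manifold with boundary, I would keep the write-up brief and refer to \cite{LiZhou2017maximum} for the one-face computation.
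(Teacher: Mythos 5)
Your first stage (apply Lemma \ref{lemma.local.foliation.1} to each face, set $x_j(q)=s$ iff $q\in F_j^s$, check transversality of the leaves at $p$, and extend by a Fermi-type chart) is exactly the construction the paper gives in the paragraph preceding the statement, so that part matches. The gap is in your verification of the metric identities, which rests on two geometric claims about the coordinate frame that are false for this construction and in fact contradict each other. First, $\partial_i$ restricted to $F_i$ is \emph{not} the unit normal of $F_i$: by duality with the coframe, $\partial_i$ is tangent to the level sets of every other coordinate, i.e.\ to the leaves $F_j^{x_j(q)}$ for $j\ne i$, and those leaves meet $F_i$ at angle $\gamma_{ij}$, not orthogonally; already in the flat wedge model, $\partial_1$ is parallel to $F_2$ and makes angle $\gamma_{12}$ with $F_1$. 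Second, you assert that $\partial_j$ is tangent to the leaf $F_j^{x_j(q)}$, but $dx_j(\partial_j)=1$, so $\partial_j$ is transverse to that leaf; what is true is that $\partial_j$ is tangent to $F_i=\{x_i=0\}$ for every $i\ne j$. These corrections are not cosmetic: since $\{\partial_j\}_{j\ne i}$ spans $TF_i$ along $F_i$, if $\partial_i$ were normal to $F_i$ you would get $g(\partial_i,\partial_j)=0$ there, so your two conclusions $g(\partial_i,\partial_i)=1$ and $g(\partial_i,\partial_j)=-\cos\gamma_{ij}\ne 0$ cannot both come out of the picture you describe.

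Moreover, no orientation convention can produce the minus sign for the coordinate frame: the constraints $F_j=\{x_j=0\}$, $x_j\ge 0$ force $\partial_j$ at the corner to be tangent to every $F_i$ with $i\ne j$ and to point weakly into the faces, so after normalizing one gets $\langle e_i,e_j\rangle(p)=+\cos\gamma_{ij}$. The quantity $-\cos\gamma_{ij}$ is the Gram matrix of the unit (co)normals, equivalently of suitably normalized $dx_i$, and indeed when the paper later uses this chart (proof of Proposition \ref{proposition.weak.maximum.principle}) it invokes $\langle e_i,e_j\rangle(p)=\cos\gamma_{ij}$ for the unit frame $e_i=\partial_i/|\partial_i|$, not the minus-sign identity. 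So to repair the write-up you should either prove the statement for the conormals $dx_i$ (normals of the leaves), or prove the $+\cos\gamma_{ij}$ identity for the normalized coordinate frame, which is all that is used downstream; note also that $g(\partial_i,\partial_i)=1$ on all of $F_i$ requires normalizing the foliation parameter $s$ along the integral curves of $\partial_i$, which your remark that $x_i$ is ``a reparametrization of signed distance'' does not by itself provide. The construction is the paper's; the verification of the stated inner products needs to be redone along these lines.
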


\subsection{Preliminaries of free boundary minimal surfaces}
We briefly describe the geometry of free boundary minimal surfaces. Let $(M,g)$ be an $n$-dimensional Riemannian polyhedron, and $g$ is a $C^{2,\alpha}$ metric. Suppose $(\Sigma,\partial \Sigma)\subset (M,\partial M)$ is an embedded hypersurface. We say $(\Sigma,\partial \Sigma)$ is an embedded free boundary minimal hypersurface, if the interior of $\Sigma$ is minimal (having zero mean curvature), and $\Sigma$ meets $\partial M$ orthogonally on the smooth part of $\partial M$.

Free boundary minimal hypersurfaces are the critical points of $(n-1)$-dimensional volume functional of $(M,g)$ among class of all embedded hypersurfaces. Given a vector field $Y$ in $M$ which is tangential to $\partial M$, let $\psi_t$ be the one-parameter family of diffeomorphisms generated by $Y$. Then for $\ep$ small enough, $\psi_t(\Sigma)$ is a smoothly embedded hypersurface in $M$. The first variational formula implies that
\begin{equation}\label{formula.first.variation.regular}
    \td{}{t}\bigg\vert_{t=0}\cH^{n-1}(\psi_t(\Sigma))=-\int_\Sigma X\cdot \vec{H} dV+ \int_{\partial \Sigma} X\cdot \eta dS,
\end{equation}
where $\vec{H}$ is the mean curvature vector field of $\Sigma$, $\eta$ is the outward conormal vector field of $\partial \Sigma\subset \Sigma$, $dV$ and $dS$ are the induced volume forms on $\Sigma$ and $\partial \Sigma$, respectively. It is then clear that $\Sigma$ is a critical point of the $(n-1)$-dimensional volume functional, if and only if \eqref{formula.first.variation.regular} vanishes for all admissible vector fields $Y$, which is equivalent to the fact that $\vec{H}\equiv 0$ and that $\Sigma$ meets $\partial M$ orthogonally.

The second variational formula for a free boundary minimal hypersurface is given as
\begin{multline}\label{formula.second.variation}
    \td{^2}{t^2}\bigg\vert_{t=0} \cH^{n-1}(\psi_t(\Sigma))=Q(f,f)\\
    =\int_\Sigma |\nabla_\Sigma f|^2-(|A_\Sigma|^2+\Ric_M (\nu,\nu))f^2 dV-\int_{\partial \Sigma} \secondfund_{\partial M} (\nu,\nu)f^2 dA.
\end{multline}
Here $\nu$ is the unit normal vector field of $\Sigma$ in $M$, $f=Y\cdot \nu$ is the normal component of the variation, $A_\Sigma$ is the second fundamental form of $\Sigma$ in $M$, $\secondfund$ is the second fundamental form of $\partial M$ in $M$, $\Ric_M$ is the Ricci curvature of $M$. We have adopted the sign convention that $A_\Sigma>0$ for convex hypersurfaces in $\RR^n$. In particular, the unit $2$-sphere in $\RR^3$ has constant mean curvature $2$.

Call a two-sided free boundary minimal hypersurface $\Sigma$ stable, if its second variation is always nonnegative. By \eqref{formula.second.variation}, $\Sigma$ is stable, if and only if $Q(f,f)\ge 0$ for any smooth function $f$.

\subsection{Regularity of free boundary area minimizing currents}
In \cite{EdelenLi2019regularity}, the authors established a regularity theory for free boundary varifolds in locally convex domains. We briefly describe the results relevant to this paper. Given an integer $2\le k\le n-1$, let $C_0=W_0^2\times [0,\infty)^{k-2}$ be a polyhedral cone, where $W_0\subset \RR^2$ is a nob-obtuse wedge region
\[W_0=\{(r,\theta): r\ge 0, 0\le \theta\le \theta_0\}\]
in polar coordinates, here $\theta_0\in (0,\pi/2]$. Notice that $C_0$ is the tangent cone of a convex Riemannian polyhedron at a boundary point. Suppose $\Phi:B_1(0^n)\rightarrow \RR^{n}$ be a $C^{2,\alpha}$ mapping with 
\[\Phi(0)=0, \quad D\Phi(0)|_{0}=Id,\quad |\Phi-Id|_{C^{2,\alpha}(B_1)}\le \Gamma\le 1.\]
Let $C=\Phi(C_0\times \RR^{n-k})$. 

Assume $\Sigma$ be an area minimizing current in $C$, such that $0\in\spt \Sigma$, and $\Sigma$ has free boundary, in the sense that $\delta\Sigma(X)=0$ for all vectors $X\in C_c^1(B_1,\RR^n)$ which are tangential to $C$. We then have the following theorem:

\begin{theo}\label{theo.regularity.of.free.boundary.minimizing.surface}
    Suppose $\Sigma$ is an area minimizing current in $C$, $0\in\spt \Sigma$. There are constants $\alpha_0(\theta_0)$, $\delta(\theta_0,n)$, $\rho\in (0,1)$, and some linear subspace $\RR^{n-k-1}\subset \RR^{n-k}$, such that for any $\alpha\in (0,\alpha_0)$, if $\Gamma\le \delta^2$, then there exists a $C^{1,\alpha}$ function $u: (C_0\times \RR^{n-k-1})\cap B_{\rho}(0)\rightarrow \RR$, so that
    \[\Sigma\cap B_{\rho}=[\graph u].\]
    Moreover, we have the following estimate:
    \begin{equation}\label{theo.regularity.quantitative.estimates}
        |Du(x_1)-Du(x_2)|\le \left(\frac{|x_1-x_2|}{\rho}\right)^\alpha.
    \end{equation}
\end{theo}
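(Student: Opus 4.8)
The plan is to prove Theorem~\ref{theo.regularity.of.free.boundary.minimizing.surface} by the De~Giorgi--Almgren--Simon blow-up and $\epsilon$-regularity scheme, adapted to a free boundary at a convex polyhedral corner, with the $C^{2,\alpha}$ perturbation $\Phi$ handled as a controlled error governed by $\Gamma\le\delta^2$. Area-minimizing currents are regular in the interior and $C^{1,\alpha}$ up to the relative interior of a single face of $C$ (De~Giorgi's theory together with the classical free boundary regularity obtained by reflection across that face, which $\Phi$ straightens to $C^{2,\alpha}$ order), so the new content is concentrated near the edges $\Phi(\{x_i=x_j=0\})$ and the corner $0$; there the essential ingredient is the strict convexity $\theta_0<\pi$ of the wedge $W_0$. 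The first step is a monotonicity formula. Using $\Phi$ and the conical structure of $C_0$ one builds a vector field $X$ on $C$ that is tangent to every face and equals the position field $x$ up to $O(\Gamma)$ (tangency is where the cone hypothesis and Lemma~\ref{lemma.local.foliation.2} are used); inserting $X$ into the free boundary first variation and using that each face of $C$ has second fundamental form bounded below by $-C\Gamma$ yields that $r\mapsto e^{\Lambda r}\Theta(\Sigma,0,r)$ is nondecreasing, where $\Theta(\Sigma,0,r)=\cH^{n-1}(\Sigma\cap B_r(0))/(\omega_{n-1}r^{n-1})$ (with $\omega_{n-1}$ the volume of the unit $(n-1)$-ball) and $\Lambda=\Lambda(\Gamma)$. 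Hence $\Theta(0):=\lim_{r\to0}\Theta(\Sigma,0,r)$ exists, and comparison with the flat model current $C_0\times\RR^{n-k-1}$ in small balls gives $\Theta(0)\le\Theta_0+C\Gamma$ with $\Theta_0$ the density of the model.

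The second step is the tangent cone classification. Rescaling $\Sigma$ by $x\mapsto x/r$, the domains $C/r$ converge to $C_0\times\RR^{n-k}$, and by compactness of area minimizers a subsequence of the rescaled currents converges to an area-minimizing free boundary current $C'$ in $C_0\times\RR^{n-k}$ with $0\in\spt C'$; the now-exact monotonicity makes $C'$ a cone of constant density $\Theta(0)$ along its spine. The key claim is that, \emph{because $W_0$ is convex}, $C'=[C_0\times V]$ for some linear $(n-k-1)$-plane $V\subset\RR^{n-k}$ with multiplicity one. I would obtain this from: (i) a strong boundary maximum principle refining \cite{LiZhou2017maximum} (of the type discussed in the introduction), confining $\spt C'$ and forcing it to meet each face of $C_0\times\RR^{n-k}$ orthogonally; (ii) the usual splitting of a minimizing cone invariant under a reflection/translation subgroup, which removes the $[0,\infty)^{k-2}$ directions and the $\RR^{n-k}$-directions of translation invariance; and (iii) the density pinching --- after blow-up $\Gamma\to0$, so $\Theta(0)\le\Theta_0$, while the density lower bound for free boundary perimeter minimizers (again using convexity) gives $\Theta(0)\ge\Theta_0$, and equality is attained only by a flat $C_0\times V$ of multiplicity one. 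A Federer dimension-reduction argument on the singular set, which in the dimension range of this paper leaves no singular points, then shows $\Sigma$ is free of singularities near $0$ and that $\Theta(0)=\Theta_0$.

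The third step turns flatness of the tangent cone into the quantitative graphical conclusion through an $\epsilon$-regularity theorem: there is $\epsilon=\epsilon(\theta_0,n)>0$ so that if the scale-invariant flat distance of $\Sigma$ to some flat model $C_0\times V$ in $B_1$ is $<\epsilon$ and $\Gamma<\epsilon$, then $\Sigma\cap B_{1/2}$ is a $C^{1,\alpha}$ graph over $C_0\times V$ with an estimate equivalent to~\eqref{theo.regularity.quantitative.estimates}. This I would prove by the tilt-excess decay scheme: Lipschitz approximation of $\Sigma$ by a graph off a small bad set; comparison of the excess with the Dirichlet energy of the harmonic function on $C_0\times\RR^{n-k-1}$ with Neumann data on the faces of $C_0$ best approximating it; and the decay estimate for that linear problem. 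Convexity is decisive here: the Neumann-harmonic functions homogeneous on the wedge $W_0$ have homogeneities $j\pi/\theta_0$, so the smallest homogeneity exceeding the tangent-plane homogeneity $1$ is $\min\{\pi/\theta_0,2\}$, which is $>1$ \emph{exactly when} $\theta_0<\pi$; one takes $\alpha_0(\theta_0)=\min\{\pi/\theta_0,2\}-1>0$. Iterating the decay over dyadic scales and summing the geometric series gives the Campanato-type bound~\eqref{theo.regularity.quantitative.estimates}. Finally, applying the second step at $0$ produces a scale $\rho$ at which $\Sigma$ is $\epsilon$-close to a flat model, and the $\epsilon$-regularity theorem yields the function $u$ on $(C_0\times\RR^{n-k-1})\cap B_\rho$ with the stated estimate.

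I expect the main obstacle to be this edge/corner $\epsilon$-regularity, for two intertwined reasons. First, its linear model is the Laplacian on the product polyhedral cone $C_0\times\RR^{n-k-1}$ with Neumann data on all faces at once, and one must establish its sharp decay at the corner and the threshold $\alpha_0(\theta_0)$; it is exactly at $\theta_0\ge\pi$ that this decay, and with it the theorem, breaks down. Second, since the metric and the domain are only $C^{2,\alpha}$, every estimate --- monotonicity, Lipschitz and harmonic approximation, excess decay --- carries perturbative error terms that must be tracked and absorbed via $\Gamma\le\delta^2$, and the harmonic comparison must be run against the \emph{variable} near-orthogonality data $g(\partial_i,\partial_j)=-\cos\gamma_{ij}+O(\Gamma)$ of Lemma~\ref{lemma.local.foliation.2} rather than the exact flat model; keeping these perturbations from accumulating over infinitely many iteration steps is the delicate technical core of the proof.
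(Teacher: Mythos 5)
The paper does not actually prove Theorem \ref{theo.regularity.of.free.boundary.minimizing.surface}: it is imported wholesale from \cite{EdelenLi2019regularity}, where it is deduced from the Allard-type $\varepsilon$-regularity statement reproduced here as Theorem \ref{theo.regularity.allard.type}, and the text simply refers the reader there. Your outline (monotonicity at the corner, blow-up and tangent-cone analysis, excess-decay against the Neumann linear problem on $C_0\times\RR^{n-k-1}$, with the exponent $\alpha_0(\theta_0)$ coming from the homogeneities $j\pi/\theta_0$ on the wedge) is the same general De Giorgi--Allard strategy that the cited work carries out, so in spirit you are reconstructing the reference rather than proposing a different route. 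But as a proof it is not complete, and the missing pieces are exactly the substance of the theorem: the edge/corner excess-decay theorem with Neumann data on all faces simultaneously, including the absorption of the $C^{2,\alpha}$ perturbation errors over infinitely many dyadic scales, is only named as "the main obstacle," which amounts to assuming Theorem \ref{theo.regularity.allard.type} rather than proving Theorem \ref{theo.regularity.of.free.boundary.minimizing.surface}.

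There is also a concrete gap in your second step. The bound $\Theta(0)\le\Theta_0+C\Gamma$ is asserted "by comparison with the flat model current in small balls," but minimality alone does not pin the density at a corner point to the model density: a priori the tangent cone could be a nonflat free boundary minimizing cone in $C_0\times\RR^{n-k}$ of strictly larger density, and excluding this is a genuine classification statement (proved by dimension reduction and a Bernstein-type argument, which is where restrictions of the type $k\le n-3$, $n\le 7$ and the convexity $\theta_0\le\pi/2$ enter; compare how the paper itself obtains the mass-ratio hypothesis of Theorem \ref{theo.regularity.allard.type} only in its applications, via Lemma \ref{lemma.curvature.estimates.local.epsilon} and Corollary \ref{corollary.stable.bernstein}, under $n\le 7$). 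Similarly, the lower bound $\Theta(0)\ge\Theta_0$ "with equality only for a flat $C_0\times V$ of multiplicity one" is precisely the rigidity one must prove, not a known density estimate you can quote; and the strong maximum principle you invoke in (i) is developed in this paper only later (Theorem \ref{theo.strong.maximum.principle}) and for a different purpose, so it cannot be cited as an ingredient here without circularity. In short, your proposal correctly identifies the architecture of the argument but leaves both load-bearing steps --- the corner $\varepsilon$-regularity and the tangent-cone/density classification --- unproved, which is why the paper treats this theorem as an external input from \cite{EdelenLi2019regularity} rather than proving it.
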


Theorem \ref{theo.regularity.of.free.boundary.minimizing.surface} is proved in \cite{EdelenLi2019regularity} via an Allard type theorem for free boundary varifolds:

\begin{theo}\label{theo.regularity.allard.type}
    Suppose $V$ is a free boundary stationary varifold in $C$, $0\in \spt V$. There are constants $\ep(\theta_0,n)$, $\alpha_0(\theta_0)$, $\rho \in (0,1)$, such that if
    \[\|V\|(B_1(0))\le (1+\ep)\cH^{n-1}((C_0\times \RR^{n-k-1})\cap B_1(0)),\]
    then $V\cap B_\rho(0)$ is given by the graph of a $C^{1,\alpha}$ function $u$, for all $\alpha\in (0,\alpha_0)$, with the estimates \ref{theo.regularity.quantitative.estimates} holds.
\end{theo}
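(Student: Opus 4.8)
The plan is to adapt Allard's regularity theorem to the polyhedral corner, the model object being the ``half-plane-type'' minimal cone $C_0\times\RR^{n-k-1}\subset C_0\times\RR^{n-k}$; write $\Theta_0=\omega_{n-1}^{-1}\cH^{n-1}\big((C_0\times\RR^{n-k-1})\cap B_1(0)\big)$ for its density. The only non-right angle of $C$ is the wedge angle $\theta_0<\pi$ of the $W_0$-factor; every other corner of $C$ is a product of half-spaces meeting orthogonally, and convexity of $C$ (which is exactly the condition $\theta_0<\pi$) is what makes the argument run.

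\emph{Monotonicity and compactness.} First I would establish an almost-monotonicity formula: testing the first variation of $V$ with a cut-off position field $X(x)=\gamma(|x-p|)(x-p)$ for $p\in\spt V$, and decomposing $x-p$ into components tangent and inward-normal to the nearest face of $C$, convexity forces the normal part to point into $C$; combined with the free boundary condition (stationarity against fields tangent to $C$) and the $C^{2,\alpha}$-smallness of $\Phi$, this yields monotonicity of $r\mapsto e^{Cr}\,\omega_{n-1}^{-1}r^{-(n-1)}\|V\|(B_r(p))$. The usual corollaries follow: existence and (appropriate) upper semicontinuity of the density, uniform mass-ratio bounds, and an Allard-type compactness theorem for free boundary stationary varifolds in converging, almost-flat domains of this type. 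With the hypothesis in force, a blow-up analysis then shows that every tangent cone of $V$ at a point of $\spt V$ is a free boundary stationary cone of density $\le\Theta_0(1+C\ep)$ in the flat model domain, and one identifies each such cone as a multiplicity-one model plane $C_0\times\Pi$ with $\Pi\subset\RR^{n-k}$ an $(n-k-1)$-plane. The identification is by dimension reduction on the spine: the $W_0$-factor is $2$-dimensional, where a small free boundary stationary $1$-varifold in a convex sector must be a ray through the vertex, and in the remaining interior directions the absence of low-density singular cones through dimension $7$ closes the induction.

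\emph{Lipschitz and harmonic approximation, iteration.} Knowing blow-ups are flat models, the standard Allard machinery applies. A covering argument produces a Lipschitz approximation: off a set of arbitrarily small $\|V\|$-measure, $V\llcorner B_{1/2}$ is the graph over a portion of the model plane $C_0\times\RR^{n-k-1}$ of a Lipschitz function with small Lipschitz constant. A Caccioppoli-type (reverse Poincar\'e) inequality — obtained by inserting into the first variation a cut-off times the height function measured in the $\RR^{n-k}$-complement direction, which is tangent to every face of $C$ and hence an admissible test field — bounds the tilt-excess $E=\int|p_{T_xV}-p_{\mathrm{model}}|^2\,d\|V\|$ over $B_{1/2}$ by the height-excess over $B_1$ plus the $\Phi$-curvature error. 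The blow-up limit of the rescaled approximations $u/E^{1/2}$ is then a solution $w$ of the Neumann problem $\Delta w=0$ on $(C_0\times\RR^{n-k-1})\cap B_{1/2}$, $\partial_\nu w=0$ on the faces. Here $\alpha_0(\theta_0)$ enters: separating variables in the cross-sectional angle $[0,\theta_0]$, the Neumann Laplacian there has first nonzero eigenvalue $(\pi/\theta_0)^2$, so since $\theta_0<\pi$ the solution $w$ is $C^{1,\alpha}$ up to the edge for every $\alpha<\alpha_0(\theta_0):=\min\{1,\,\pi/\theta_0-1\}$, with $\sup_{B_{\rho_0}}|Dw-Dw(0)|\le C\rho_0^{\alpha}\sup_{B_{1/2}}|w|$. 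Transferring this back to $V$ and fixing $\rho_0$ and $\theta<1$ appropriately, the excess of $V$ over the model plane tilted by $Dw(0)$ at scale $\rho_0$ is at most $\theta\,\rho_0^{2\alpha}$ times the excess at scale $1$, plus the curvature error; since the tilt at each step is $O(E^{1/2})$ and both geometric series converge, iterating over dyadic scales gives geometric excess decay, whence $V\cap B_\rho=[\graph u]$ for a single $C^{1,\alpha}$ function $u$ satisfying \eqref{theo.regularity.quantitative.estimates}.

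\emph{Main obstacle.} The new content is all at the wedge edge and shows up in two places. The first is the blow-up analysis near the edge — making the covering and the Lipschitz approximation valid when $\spt V$ touches the edge, and ruling out exotic edge tangent cones; this is where the $2$-dimensionality of $W_0$ and the standing hypothesis $n\le7$ are used, exactly as in interior Allard--Almgren theory, and it breaks down without $\theta_0<\pi$ because then $C$ is non-convex and the monotonicity fails. The second is the sharp regularity of the wedge Neumann problem, which pins down $\alpha_0(\theta_0)$: elementary separation of variables, but again essentially dependent on $\theta_0<\pi$, and — conveniently — producing $\alpha_0=1$, i.e.\ $C^{1,1}$, as soon as $\theta_0\le\pi/2$, in line with the angle hypothesis maintained throughout the paper. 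The remaining work is bookkeeping: verifying that every test field used in the first-variation manipulations stays tangent to the curved domain $C$, and tracking the $\Phi$-dependent error terms through the iteration.
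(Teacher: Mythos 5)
Your overall scheme --- almost-monotonicity from convexity of the wedge, compactness, Lipschitz approximation, a reverse Poincar\'e inequality using the graph direction (which is tangent to every face of the model), harmonic approximation against the Neumann problem on $C_0\times\RR^{n-k-1}$, and excess-decay iteration with $\alpha_0$ dictated by the first nonconstant Neumann mode $r^{\pi/\theta_0}\cos(\pi\theta/\theta_0)$ of the sector --- is exactly the Allard-type strategy behind this theorem; the paper itself gives no argument and defers entirely to \cite{EdelenLi2019regularity}, so the comparison is with that proof, and on these points your outline is on track.

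There is, however, a genuine gap in the step where you ``identify blow-ups as flat models.'' You invoke dimension reduction, ``the absence of low-density singular cones through dimension $7$,'' and the standing hypothesis $n\le 7$. That machinery belongs to the \emph{minimizing} theory (it is what limits Theorem \ref{theo.regularity.of.free.boundary.minimizing.surface} and the Bernstein-type Corollary in Section 3 to $n\le 7$); it is neither available nor needed here. Theorem \ref{theo.regularity.allard.type} is an $\ep$-regularity statement for merely \emph{stationary} free boundary varifolds, its constants $\ep(\theta_0,n),\alpha_0(\theta_0),\rho$ carry no dimension restriction, and for stationary varifolds there is no classification of singular cones in any dimension --- indeed, classifying stationary cones of density close to $\Theta_0$ is essentially the theorem itself, so quoting such a classification is circular. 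The correct substitute, as in Allard's original argument, is to use the hypothesis $\|V\|(B_1)\le(1+\ep)\,\omega_{n-1}\Theta_0$ directly: by the monotonicity formula the density ratio is pinched between its value at interior/face/edge points and $\Theta_0(1+C\ep)$, and a compactness argument as $\ep\to 0$ produces a free boundary stationary limit whose mass ratio is constant about every point of its support; equality in monotonicity makes it a cone about every point, hence translation-invariant along its spine, and the free boundary constraint together with the mass value forces it to be $C_0\times\Pi$ with multiplicity one. This rigidity (not a tangent-cone classification at each point of $V$) is what supplies the initial height-excess smallness feeding your Lipschitz/harmonic approximation, after which your iteration, the wedge eigenvalue computation for $\alpha_0(\theta_0)$, and the bookkeeping of the $\Phi$-errors proceed as you describe. (A minor point: for $\theta_0\le\pi/2$ the sector analysis gives $C^{1,\alpha}$ for every $\alpha<1$, not literally $C^{1,1}$, but the theorem only claims $\alpha\in(0,\alpha_0)$ in any case.)
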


We refer the readers to \cite{EdelenLi2019regularity} for a detailed proof.

\section{Ancillary estimates for free boundary area minimizing hypersurfaces}

\subsection{A strong maximum principle for varifolds with free boundary}\label{section.free.boundary.minimal.slicing}

We observe that \eqref{problem.variation} is a problem with barriers, since we require that $F_T\subset \Omega$ and $F_B\cap \Omega=\emptyset$. In general, minimization problem with barriers do not necessarily produce a minimal surface. However, in our setting, since both barriers, $F_T$ and $F_B$, are mean convex hypersurfaces, along which the dihedral angle of $M$ is everywhere $\pi/2$, any varifold  that is stationary for \eqref{problem.variation} should be disjoint from both $F_T$ and $F_B$, and hence is a minimal hypersurface $\Sigma$. This is achieved via the following varifold maximum principle. 

\begin{theo}\label{theo.strong.maximum.principle}
    Let $\Sigma=\Omega\cap \mathring{M}$ be a varifold, stationary in the class $\cC$ for $\eqref{problem.variation}$. Assume that $F_T$ and $F_B$ are weakly mean convex. Then either $\spt \Sigma$ is disjoint from the closure of $F_T$ and $F_B$, or $\Sigma$ lies entirely in $F_T$ or $F_B$.
\end{theo}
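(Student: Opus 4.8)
The plan is to localize the statement near a point of contact and combine the strong maximum principle for (one-sided) stationary integral varifolds against a smooth mean convex barrier with the local geometry of $\partial M$ near its edges developed in Section~2; it is here that the \emph{weak} free boundary maximum principle of Li--Zhou \cite{LiZhou2017maximum} gets upgraded to a \emph{strong} maximum principle valid in a polyhedron. I treat the top face $F_T$, the argument for $F_B$ being identical after exchanging the roles of $\Omega$ and $M\setminus\Omega$. The starting observation is that every competitor $\Omega\in\cC$ contains a one-sided neighborhood of $F_T$, so near any point of $\overline{F_T}$ the separating hypersurface $\Sigma=\partial\Omega\cap\mathring M$ lies on the interior side of $F_T$, and $\Sigma$ is stationary in $\mathring M\setminus(\overline{F_T}\cup\overline{F_B})$ while along $\overline{F_T}$ it is only one-sidedly stationary. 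Since $F_T$ is weakly mean convex, its mean curvature vector $\vec H_{F_T}$ points into $M$, i.e.\ toward the side on which $\Sigma$ sits --- exactly the configuration in which contact must propagate.

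The heart of the matter is the local assertion: for every $p\in\spt\Sigma\cap\overline{F_T}$, the set $\spt\Sigma$ contains a relative neighborhood of $p$ in $\overline{F_T}$, on which consequently $F_T$ is minimal. By the over-prism structure $F_T$ is disjoint from $F_B$ and meets only lateral faces, at dihedral angle $\pi/2$ along their common edges; hence either (i) $p$ lies in the relative interior of $F_T$, or (ii) $p$ lies on an edge $\overline{F_T}\cap\overline{F_L}$, possibly on a lower stratum where $F_T$ meets several lateral faces $F_1,\dots,F_k$. In case (i), near $p$ the manifold $M$ has smooth boundary $F_T$ and $\Sigma$ is a one-sided stationary integral varifold accumulating on the weakly-mean-convex-toward-it hypersurface $F_T$; the strong maximum principle for stationary varifolds (Solomon--White, Ilmanen, White, Wickramasekera) gives the desired relative neighborhood inside $\spt\Sigma$, and one-sided stationarity then forces $F_T$ to be minimal there. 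For case (ii), Lemma~\ref{lemma.local.foliation.2} supplies coordinates flattening $F_T$ and the lateral faces through $p$ into mutually orthogonal coordinate faces, and Lemma~\ref{lemma.local.foliation.1} a local foliation whose leaves meet each lateral face through $p$ at the prescribed right angle; one then reruns the first-variation comparison of Li--Zhou \cite{LiZhou2017maximum}, with test vector fields tangent to all the lateral faces through $p$ constructed from these coordinates, so that the conormal boundary terms along each $F_j$ cancel by the free boundary condition of $\Sigma$ and the computation reduces to the interior situation of case (i). Alternatively one may reflect $M$ successively across the lateral faces through $p$ --- which is legitimate because they meet $F_T$, and each other, orthogonally --- and invoke a maximum principle in the resulting, merely Lipschitz, ambient metric; the direct argument avoids that regularity point.

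Granting the local assertion, $Z:=\spt\Sigma\cap\overline{F_T}$ is relatively open in $\overline{F_T}$ and relatively closed; since $F_T$ is connected (work componentwise otherwise), $Z$ is either empty, in which case $\spt\Sigma$ is disjoint from $\overline{F_T}$, or all of $\overline{F_T}$. In the latter case $\spt\Sigma\supseteq\overline{F_T}$ with $F_T$ minimal, and, $\Sigma$ being the multiplicity-one reduced boundary of a Caccioppoli set with $F_T$ connected, the constancy theorem forces the component of $\Sigma$ meeting $\overline{F_T}$ to equal $F_T$; that is, $\Sigma$ lies entirely in $F_T$. Applying the same dichotomy to $F_B$ and using $\overline{F_T}\cap\overline{F_B}=\emptyset$ yields Theorem~\ref{theo.strong.maximum.principle}.

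The main obstacle is case (ii): turning Li--Zhou's weak maximum principle, stated for a smooth ambient manifold, into a strong maximum principle in a polyhedron --- ensuring that a single contact point of $\spt\Sigma$ with $\overline{F_T}$ forces an entire relative neighborhood into $\spt\Sigma$, with the barrier $F_T$, the foliation leaves of Lemma~\ref{lemma.local.foliation.1}, and the competitor/test vector fields all chosen compatibly with the (possibly several) lateral faces meeting along the edge through $p$, and while permitting $p$ to be at the same time a free boundary point of $\Sigma$ itself. Case (i) and the topological bookkeeping are routine by comparison.
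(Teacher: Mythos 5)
Your overall skeleton (localize at a contact point, prove that contact propagates, then run an open--closed argument plus the constancy theorem) is consistent with the statement, and your case (i) is exactly how the paper disposes of interior contact points, via Solomon--White. But there is a genuine gap at the step you yourself flag as the heart of the matter, case (ii), and the mechanisms you propose there do not close it. ``Rerunning the first-variation comparison of Li--Zhou with test vector fields tangent to the lateral faces'' is precisely what the paper does in Proposition \ref{proposition.weak.maximum.principle} --- but that argument only yields the \emph{weak} maximum principle: it shows $\delta\Sigma(X)<0$ at a contact point, hence no contact at all, and for this it needs a strictly positive lower bound $c_0$ on the mean curvature of the (perturbed) barrier; the trace estimate $\tr_P Q\le -\tfrac{c_0}{2}\phi$ degenerates when the face is only weakly mean convex. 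A first-variation test field cannot, by itself, show that the contact set is relatively open in $\overline{F_T}$ at an edge point; and there is no ``reduction to the interior situation of case (i),'' because at an edge point $\Sigma$ has free boundary and the paper explicitly notes that the strong maximum principle for \emph{stationary} varifolds with free boundary was previously unknown even in smooth ambient manifolds --- it is the new content of Theorem \ref{theo.strong.maximum.principle}, not something one can quote. Your reflection alternative also fails: adjacent lateral faces may meet at the interior angles of $P_0$, which are allowed to be strictly less than $\pi/2$, so successive reflections do not even produce a consistent ambient space, and where they do the metric is only Lipschitz, for which no such varifold maximum principle is available.

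What is actually needed (and what the paper supplies) is a sliding-barrier construction: Lemma \ref{lemma.existence.local.foliation} uses the implicit function theorem, together with the polyhedral Schauder theory of Appendix B (this is where the hypothesis that the angles of $P_0$ are at most $\pi/2$ enters), to build near the edge point a foliation $\{F_{u_{r,s,f,t}}\}$ of free boundary graphs with prescribed small constant mean curvature $s$ and prescribed outer boundary data $f+t$ on $\Gamma_r$. Assuming $\spt\Sigma$ touches $F_B$ at $p$ but does not coincide with it nearby, one chooses $f\ge 0$ supported where $\spt\Sigma$ stays strictly above $F_B$, uses the Hopf maximum principle to get $u_{0,0}(0)>0$, perturbs to a strictly mean convex family with $s>0$, and slides down to the first contact parameter $t_0<0$: the touching leaf is a strictly mean convex free boundary barrier meeting $\spt\Sigma$ away from $\Gamma_r$, contradicting the weak maximum principle. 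Without this construction (or an equivalent substitute), your local propagation claim at edge points --- and hence the theorem --- is asserted rather than proved.
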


Before embarking into the proof, we remark that analogous maximum principles have been established in various settings. If $\Sigma$ is a $C^2$ hypersurface, this is just interior maximum principle and boundary Hopf lemma for elliptic equations. For varifolds without boundary, a strong maximum principle in codimension one was proved by Solomon-White \cite{SolomonWhite1989strong}. White \cite{white2010maximum} then generalized the theorem to arbitrary codimension. Similar statements hold if the ambient manifold has various types of singularities. See, e.g. \cite{Simon87strict,Wickramasekera14strongmaximum}. For free boundary varifolds, a weak maximum principle was done by Li-Zhou \cite{LiZhou2017maximum}. Theorem \ref{theo.strong.maximum.principle} is an extension of \cite{LiZhou2017maximum} when the ambient manifold is not necessarily smooth. Also, the strong maximum principle for stationary varifold is previously unknown, even in smooth ambient manifolds. Our proof is inspired by \cite{SolomonWhite1989strong} and \cite{LiZhou2017maximum}.

As a consequence of Theorem \ref{theo.strong.maximum.principle}, the minimizer of \eqref{problem.variation} gives an area minimizing hypersurface with free boundary along $F_L$. Combined with Theorem \ref{theo.regularity.of.free.boundary.minimizing.surface}, we have know $\Sigma$ is a $C^{1,\alpha}$ hypersurface. Further more, with the angle assumption in \ref{definition.euclid.prisms}, we can upgrade $C^{1,\alpha}$ to $C^{2,\alpha}$ by Appendix B.

\begin{theo}\label{theo.regularity}
    Let $\Omega$ be the minimizer of \eqref{problem.variation}. Then $\Sigma=\Omega\cap \mathring{M}$ is a $C^{2,\alpha}$ hypersurface up to its corners.
\end{theo}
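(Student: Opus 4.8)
The plan is to combine the interior and boundary regularity results already collected, plus a conformal trick, to upgrade the minimizer from a current to a $C^{2,\alpha}$ hypersurface up to its corners.

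\medskip

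\noindent\textbf{Step 1: Interior regularity of the area-minimizing current.} By the Federer--Fleming compactness theorem the minimizer $\Omega$ of \eqref{problem.variation} is an integral current with $\Sigma=\partial\Omega\cap\mathring M$ integral. Standard interior regularity for codimension-one area-minimizing currents (Federer's dimension-reduction argument) gives that $\Sigma$ is smooth away from a closed set of Hausdorff dimension at most $n-8$; since $n\le 7$ this singular set is empty, so $\Sigma$ is a smooth (hence $C^{2,\alpha}$) embedded minimal hypersurface in $\mathring M$ away from $\partial M$.

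\medskip

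\noindent\textbf{Step 2: Behavior near the mean-convex barriers.} The set $\cC$ is defined by the two barrier constraints $F_T\subset\Omega$ and $\Omega\cap(P_0\times[0,1]^{n-3}\times\{0\})=\emptyset$. Because $F_T$ and $F_B$ are weakly mean convex and the dihedral angle of $M$ along them equals $\pi/2$, Theorem~\ref{theo.strong.maximum.principle} applies: $\spt\Sigma$ is either disjoint from $\overline{F_T\cup F_B}$, or $\Sigma$ lies entirely in $F_T$ or $F_B$. In the latter case $\Sigma$ is already a (piece of a) smooth face of $M$ and there is nothing to prove, so we may assume $\spt\Sigma$ stays away from $F_T$ and $F_B$. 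Consequently the only boundary that $\Sigma$ can touch is the lateral face $F_L$, and along $F_L$ the variational problem \eqref{problem.variation} has no active constraint, so $\Sigma$ is \emph{free-boundary} stationary there: $\delta\Sigma(X)=0$ for all $X\in C^1_c$ tangential to $F_L$ (and to the edges of $F_L$, by Lemma~\ref{lemma.local.foliation.2}).

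\medskip

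\noindent\textbf{Step 3: Free-boundary regularity via the local model.} Near any point $p\in\Sigma\cap F_L$ we use Lemma~\ref{lemma.local.foliation.2} to produce a coordinate chart identifying a neighborhood of $p$ in $M$ with a region in the model polyhedral cone $C=\Phi(C_0\times\RR^{n-k})$ of Section~2.3, with $\Phi$ close to the identity in $C^{2,\alpha}$ after rescaling; the hypothesis in Definition~\ref{definition.euclid.prisms} that all interior angles of $P_0$ are at most $\pi/2$ guarantees the wedge angle $\theta_0$ in $C_0=W_0^2\times[0,\infty)^{k-2}$ lies in $(0,\pi)$, so $C_0$ is genuinely locally convex and the hypotheses of Theorem~\ref{theo.regularity.of.free.boundary.minimizing.surface} are met. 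Applying that theorem (which holds verbatim in the Riemannian polyhedron by the locality remark at the end of Section~2.3), together with its quantitative estimate \eqref{theo.regularity.quantitative.estimates}, gives that $\Sigma$ is, near $p$, the graph of a $C^{1,\alpha}$ function over a model half-space; in particular $\Sigma$ is a $C^{1,\alpha}$ hypersurface up to $F_L$ and up to the corners where several faces of $M$ meet $\Sigma$, with locally uniform estimates.

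\medskip

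\noindent\textbf{Step 4: Bootstrap $C^{1,\alpha}\Rightarrow C^{2,\alpha}$.} Once $\Sigma$ is a $C^{1,\alpha}$ free-boundary minimal hypersurface meeting the $C^{2,\alpha}$ faces of $M$ at the prescribed constant angles, the minimal-surface equation together with the Neumann-type (orthogonality) boundary condition forms an elliptic boundary-value problem with $C^{1,\alpha}$ coefficients; Schauder theory for oblique-derivative problems in the corner domains — carried out in Appendix~B under precisely the $\le\pi/2$ angle assumption, which ensures the needed solvability/regularity of the associated linear problems at the edges — upgrades $u$ to $C^{2,\alpha}$ up to the corners. Hence $\Sigma=\Omega\cap\mathring M$ is a $C^{2,\alpha}$ hypersurface up to its corners, as claimed.

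\medskip

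\noindent\textbf{Main obstacle.} The substantive difficulty is entirely in Steps 3 and 4: establishing boundary (and especially \emph{corner}) regularity for the free-boundary minimizer in the non-smooth domain $M$. The interior theory is classical, but the free-boundary Allard/graph decomposition at edges where a wedge face meets the lateral faces, and the subsequent Schauder bootstrap at the corners, are exactly the delicate points; they are the reason one imports Theorem~\ref{theo.regularity.of.free.boundary.minimizing.surface} from \cite{EdelenLi2019regularity} and the reason the $\le\pi/2$ hypothesis on $P_0$ cannot (yet) be dropped.
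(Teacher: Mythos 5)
Your proposal is correct and takes essentially the same route as the paper: the strong maximum principle (Theorem \ref{theo.strong.maximum.principle}) reduces the minimizer to a free boundary area minimizing hypersurface along $F_L$, Theorem \ref{theo.regularity.of.free.boundary.minimizing.surface} gives $C^{1,\alpha}$ regularity up to the corners, and the Appendix B elliptic theory under the $\le \pi/2$ angle hypothesis bootstraps this to $C^{2,\alpha}$. The only cosmetic slip is the ``conformal trick'' announced in your opening sentence, which never appears in the argument and is not needed.
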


We first prove the weak maximum principle as follows.
\begin{prop}\label{proposition.weak.maximum.principle}
    In addition to the assumptions of Theorem \ref{theo.strong.maximum.principle}, assume that $F_T$, $F_B$ are strictly mean convex. Then $\spt \Sigma\cap F_T=\spt \Sigma\cap F_B=\emptyset$.
\end{prop}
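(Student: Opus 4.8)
The plan is to argue by contradiction: suppose $p \in \spt\Sigma \cap F_T$ (the case of $F_B$ is symmetric). The strategy is to show that near $p$ the mean convexity of $F_T$ forces $\Sigma$ to lie strictly on the interior side of $F_T$, contradicting stationarity. First I would work in the local coordinates from Lemma \ref{lemma.local.foliation.2}: since along $F_T$ the dihedral angle with every adjacent lateral face is $\pi/2$, the foliation $\{F_T^s\}_{s \in [0,\delta)}$ produced by Lemma \ref{lemma.local.foliation.1} consists of hypersurfaces each meeting the relevant lateral faces orthogonally. This is precisely what makes each leaf $F_T^s$ an admissible competitor-barrier: one can slide $\Sigma$ against the foliation without violating the constraint structure of the class $\cC$ or the free-boundary condition along $F_L$.

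Next I would set up the comparison. Because $F_T = F_T^0$ is strictly mean convex with respect to the inward direction, by continuity the leaves $F_T^s$ for small $s>0$ are also strictly mean convex with a uniform lower bound $H_{F_T^s} \ge c_0 > 0$ on a neighborhood $U$ of $p$. Define $s^* = \sup\{ s : F_T^s \cap \spt\Sigma \ne \emptyset \text{ near } p\}$; since $\Sigma$ is contained on the interior side of $F_T$ (as $\Omega \in \cC$ forces $F_T \subset \Omega$ and the relevant part of $F_B$ disjoint from $\Omega$, so $\Sigma = \partial\Omega \cap \mathring M$ separates them and lies between the leaves), $s^*$ is well-defined, finite, and by hypothesis $s^* = 0$ — I want to derive a contradiction from $s^* = 0$, i.e. from the fact that $\spt\Sigma$ touches $F_T$ itself. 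At the first touching leaf, $\Sigma$ lies weakly on one side of $F_T^{s^*}$ and shares the point $p$ (or a point of $F_T^{s^*}$) with it.

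The contradiction then comes from the varifold Hopf-type argument in the spirit of Solomon–White \cite{SolomonWhite1989strong} and Li–Zhou \cite{LiZhou2017maximum}. Since $\Sigma$ is stationary in the class $\cC$ — meaning $\delta\Sigma(X) = 0$ for vector fields $X$ tangent to $F_L$ and pointing into $\Omega$ along $F_T \cup F_B$ — while the barrier leaf has strictly positive mean curvature, one compares the first variation of $\Sigma$ with that of the mean-convex foliation leaf. Testing stationarity against a vector field that pushes $\Sigma$ off $F_T^{s^*}$ along the foliation direction, and using that this vector field is admissible (tangential to $F_L$, thanks to the orthogonality built into the foliation), the sign of $\delta\Sigma(X)$ is forced to be strictly positive by the strict mean convexity, unless $\spt\Sigma$ near $p$ is disjoint from the leaf — contradicting that $p$ is a touching point. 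Hence $\spt\Sigma \cap F_T = \emptyset$, and symmetrically $\spt\Sigma \cap F_B = \emptyset$.

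The main obstacle I anticipate is the low regularity: $\Sigma$ is only a stationary varifold, not a priori a smooth hypersurface, so the classical Hopf lemma does not apply directly, and one must carry out the one-sided comparison at the level of varifolds near a boundary stratum where several faces meet. The delicate point is ensuring the sliding vector field remains admissible (tangential to $F_L$) uniformly near the edges $F_T \cap F_L$, and that the monotonicity/tangent-cone analysis needed to locate the touching point and extract a contradiction goes through in the polyhedral domain — this is exactly where the dihedral angle hypothesis ($\pi/2$ along $F_T, F_B$) and Lemma \ref{lemma.local.foliation.1} are essential, and where I expect the technical weight of the argument to lie.
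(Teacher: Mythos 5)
Your overall strategy is the right one and is the same family of argument the paper uses: argue by contradiction at a touching point, build a foliation adapted to $F_B$ (or $F_T$) that meets the lateral faces orthogonally via Lemmas \ref{lemma.local.foliation.1}--\ref{lemma.local.foliation.2}, and test stationarity in the class $\cC$ against a vector field along the foliation direction, in the spirit of Solomon--White and Li--Zhou. (Two small structural remarks: the sliding quantity $s^*$ does no work here --- it is the strong maximum principle, Theorem \ref{theo.strong.maximum.principle}, that uses a first-touching-leaf argument; and by \cite{SolomonWhite1989strong} the interior touching case is immediate, so the only new case is $p\in\partial F_B$, where the contradiction comes from the variational inequality $\delta\Sigma(X)\ge 0$ for admissible inward-pointing $X$, against a constructed $X$ with $\delta\Sigma(X)<0$.)

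The genuine gap is at your sentence ``the sign of $\delta\Sigma(X)$ is forced to be strictly positive by the strict mean convexity.'' For a stationary varifold the first variation is $\delta\Sigma(X)=\int \Div_{T_x\Sigma}X\,d\|\Sigma\|$, and the tangent planes of $\Sigma$ have no a priori relation to the foliation: strict mean convexity of the leaves only controls $\tr_P\bangle{\nabla_\cdot X,\cdot}$ when $P$ is tangent to a leaf, and a bare field ``along the foliation direction'' (which in any case must be cut off to be admissible) gives a divergence with no definite sign on tilted planes --- individual principal curvatures of the leaves can be negative, and the cutoff's derivative enters with an uncontrolled sign. The paper closes exactly this: it takes $X=\phi(s)\nu$ with $\phi(s)=e^{1/(s-\ep)}$, so that $\phi'\le-\phi/\ep^2$, and verifies by decomposing an arbitrary $(n-1)$-plane $P$ via a tilt angle $\theta$ and absorbing the cross terms $Q(\nu,v_0)$ with Young's inequality against the uniform bound $K$ on $|A^{\overline F_B^s}|$ and $|\bangle{\nabla_\nu\nu,e_i}|$, that $\tr_P Q\le-\tfrac{c_0}{2}\phi<0$ for \emph{every} plane $P$; only then does $\delta\Sigma(X)<0$ follow. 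In addition, at an edge point one cannot simply use the foliation of Lemma \ref{lemma.local.foliation.1} based at $F_B$ itself: the paper extends the metric across $F_B$, constructs a comparison hypersurface $\overline F_B$ touching $F_B$ to second order at $p$ (with boundary prescribed by the $\ep\dist^4$ graph condition) and meeting the lateral faces orthogonally, and foliates by $\overline F_B^s$ so that the resulting $X$ is defined on a full neighborhood of $p$ in $M$, tangential to the lateral faces, and inward along $F_B$. You correctly flag admissibility near the edges and the failure of the classical Hopf lemma as the hard points, but the proposal supplies neither the cutoff mechanism that makes the trace negative on all planes nor the second-order-tangent comparison surface, and without these the argument does not close.
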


\begin{proof}
    It suffices to prove that $\spt \Sigma\cap F_B=\emptyset$. Assume, for the sake of contradiction, that there exists $p\in \spt \Sigma\cap F_B$. By the maximum principle \cite{SolomonWhite1989strong}, $p\in \partial F_B$. We construct a vector field $X$ supported in a neighborhood of $p$, tangential to $F_L$, and points into $M$ along $F_B$, such that $\delta \Sigma(X)<0$. We proceeds as in \cite{LiZhou2017maximum}. Suppose a neighborhood $U$ of $p$ is diffeomorphic to $\{(x_1,\cdots,x_n): |x|\le r, x_j\ge 0, j=1,\cdots, k\}$, where $F_B\cap U$ lies on $\{x_1=0\}$, $F_j\cap U$, $2\le j\le k$, lies on $\{x_j=0\}$, and the coordinate functions $\{x_i\}_{i=1}^n$ are given by Lemma \ref{lemma.local.foliation.2}. Now extend the metric tensor smoothly into $B_r(0)\subset \RR^n$, such that $\{x_1=0\}$ is still normal to other faces. Denote $M^*$ the region given by $\{(x_1,\cdots,x_n): x_j\ge 0, j=2,\cdots,k\}$. With slight abuse of notation, we denote the neighborhood of $p$ in this larger Riemannian manifold by $U$.
    
    For $\ep>0$ sufficiently small enough, define
    \[\overline F_B =\{x_1= \ep (x_1^2+\cdots + x_n^2)^2 \}.\]
    
    Notice that $\overline{F}_B$ and $F_B$ touches in second order at $p$, and meets $F_j$, $2\le j\le k$, orthogonally. Take the foliation $\overline{F}_B^s$ given by Lemma \ref{lemma.local.foliation.1} such that $\overline{F}_B^s$ is orthogonal to $F_j$,  and define $s$ to be the function in $U$ such that $s(q)$ is the value for which $q\in \overline{F}_B^{s(q)}$. Since $F_B$ is strictly mean convex and agrees to $\overline F_B$ in second order, the mean curvature of $\overline F_B^s$ is lower bounded by $c_0>0$ for $s\in [0,\ep]$, by shrinking $\ep>0$ if necessary. Then $\nabla s=\psi \nu$ for some function $\psi$, where $\nu$ is the unit normal vector field of $\overline{F_B}^{s(q)}$. Note that $\psi(p)=1$, and $\nu$ is tangential to $F_j$, $j=2,\cdots,k$.
    
    Define a vector field $X$ on $M^*$ by letting $X(q)=\phi(s(q))\nu(q)$, where $\phi(s)$ is a cutoff function defined by $\phi(s)=e^{1/(s-\ep)}$ when $0\le s\le \ep$, $\phi= 0$ when $s\ge \ep.$ In $U$, $X$ is a vector field tangential to $\partial M$ along $F_j$, $j\ge 2$, and is inward pointing on $F_B$, provided that $\ep$ is sufficiently small.
    
    Near $p$, take a new coordinate system $\{x_j\}_{j=1}^n$ in the polyhedron enclosed by faces $\overline{F}_B,F_2,\cdots,F_n$, given by Lemma \ref{lemma.local.foliation.2}. Let $\{e_i=\partial_i/|\partial_i|\}$ be the unit frame. Note that at $p$, $\bangle{e_1,e_j}=0$, $j\ge 2$. Define a bilinear form $Q$ on $T M^*$ by letting $Q(u,v)(q)=\bangle{\nabla_u X,v}(q)$. We calculate the components of $Q$. By straightforward calculation, we have that $Q(e_1,e_1)=\phi'\psi$, and for $i,j\ge 2$, 
    $Q(e_i,e_j)=-\phi A^{\overline{F}_B}(e_i,e_j), Q(e_i,e_1)=0, Q(e_1,e_i)=\phi\bangle{\nabla_\nu\nu, e_i}.$
    Here $A^{\overline{F}_B}$ is the second fundamental form of the hypersurface $\overline{F}_B$ with respect to $\nu$.


    Since $\overline{F}_B$ is a small $C^2$ perturbation of $F_B$, by taking $\ep\in (0,\ep_0)$ and the neighborhood $U$ sufficiently small, we have $|\bangle{\nabla_\nu \nu,e_i}|,|A^{\overline{F}_B^s}|<K$ for some constant $K$ depending only on $(M,g)$. In particular, for small $\ep$ and $U$, and $2\le i,j\le n$,
    $|Q(e_i,e_j)|,|Q(e_i,e_1)|,|Q(e_1,e_i)|\le K$. Also, by construction of $\phi$, it is straightforward to check that $\phi'\le -\frac{1}{\ep^2}\phi$. Finally, since $\psi(p)=1$, by possibly taking $U$ sufficiently small, we have that $\psi\ge \frac{1}{2}$ in $U$.
    
    We verify that for any $q\in U$, any $(n-1)$-dimensional subspace $P\subset T_q M$, $\tr_P Q<0$. Let $c_0>0$ be a lower bound of the mean curvature of $\overline{F}_B^s$ for $s\in [0,\delta)$. If $P=T_q \overline{F}_B^s$, then $\tr_P Q\le -c_0<0$. If $P\not\subset T_q^{\overline{F}_B^s}$, since $P$ and $T_q\overline{F}_B^s$ are two $(n-1)$ dimensional subspaces of $T_q M\simeq \RR^n$, there is an orthonormal basis $v_1,\cdots,v_{n-1}$ of $P$, such that $v_1,\cdots,v_{n-2}\in T_q \overline{F}_B^s$, $v_{n-1}\not\in T_q\overline{F}_B^s$. Take orthogonal decomposition $v_{n-1}=\cos \theta v_0+\sin\theta \nu$, $v_0\in T_q \overline{F}_1^s$ and $v_0\perp v_j$, $j=1,\cdots,n-2$. In particular, $v_0,\cdots,v_{n-2}$ is an orthonormal basis of $T_q \overline{F}_B^s$, and $v_0\perp \nu$. We express $v_0$ as a linear combination of $\{e_2,\cdots,e_n\}$: $v_0=\sum_{j=2}^n a_j e_j$. Notice that $\{e_j\}_{j=2}^n$ is not orthonormal in $U$, but $\bangle{e_i,e_j}(p)=\cos\gamma_{ij}$. So if we take the neighborhood $U$ small enough, $|a_j|<2$, for each $j=2,\cdots,n$. 
    \begin{equation}
        \begin{split}
            \tr_P Q &= \sum_{i=1}^{n-1} Q(v_i,v_i)= \sum_{i=0}^{n-1} Q(v_i,v_i) +\sin^2\theta (Q(\nu,\nu)-Q(v_0,v_0)) \\
                    &\qquad\qquad    +\sin\theta\cos\theta (Q(\nu,v_0)+Q(v_0,\nu))\\
                    &= -\phi H^{\overline{F}_B^s} + \sin^2\theta \left(\phi'\psi+\phi A^{\overline{F}_B^s}(\nu_0,\nu_0)\right)+\sin\theta\cos\theta Q(\nu,v_0)\\
                    &\le -c_0 \phi +\sin^2\theta \phi \left(-\frac{1}{2\ep^2}+K\right)+\sin\theta\cos\theta \sum_{j=2}^n a_j Q(\nu,e_j)\\
                    &\le -c_0 \phi+ \phi\left(-\frac{\sin^2\theta}{2\ep^2} +K\sin^2\theta+2(n-1)K|\sin\theta\cos\theta|\right)\\
                    &\le -c_0\phi+\phi\left(-\frac{1}{2\ep^2}+K+\frac{2(n-1)^2 K^2}{c_0}\right)\sin^2\theta+\phi\frac{c_0}{2}\cos^2\theta\\
                    &\le -\frac{c_0}{2} \phi,
        \end{split}
    \end{equation}
    for all $\theta \in [0,2\pi]$, provided we take $\ep\in (0,\ep_0)$ and $\ep_0>0$ is small enough (depending on $K,n,c_0$). This shows that $\tr_P Q<0$ for all $(n-1)$ dimensional subspace $P$. In particular, $\delta V (X)<0$ whenever $\spt V\cap U\ne \emptyset$. This finishes the proof.

\end{proof}

We proceed to the proof of Theorem \ref{theo.strong.maximum.principle}. The idea here is greatly inspired by the strong maximum principle for elliptic equations: assuming that $F_B,F_T$ are only weakly mean convex, if $\Sigma$ touches, say $F_B$, but do not entirely coincide with it, then we can deform $F_B$ to a strictly mean convex surface $\tilde{F}_B$, violating Proposition \ref{proposition.weak.maximum.principle}. This has been carried out for varifolds without boundary in \cite{SolomonWhite1989strong}. Suppose $p\in \spt\Sigma\cap \partial F_B$, and in a neighborhood $U$ of $p$ in $M$, there is a diffeomorphism $\Phi: \{(x_1,\cdots,x_n):x_j\ge 0,j=1,\cdots,k\}\rightarrow U$ with $\Phi(\{x_1=0\})\subset F_B$. As in the proof of Proposition \ref{proposition.weak.maximum.principle}, we extend the metric $g$ to $\{(x_1,\cdots,x_n): x_j\ge 0,j=2,\cdots,k\}$, such that $F_B$ is still orthogonal to the lateral faces $\Phi(\{x_j=0\})$, $j=2,\cdots,k$. For the rest of the argument, we describe the open set $U$ by the coordinates $\{x_j\}_{j=1}^n$ via $\Phi$.

For regularity purposes, we define the following domains, over which we will construct our free boundary hypersurface:
\[D_r=\left\{(x_2,\cdots,x_n): x_2,\cdots,x_k\ge 0, \sum_{j=2}^k (x_j+\frac{r}{n-1})^2+\sum_{j=k+1}^n x_j^2 \le r^2\right\}.\]
And similarly,
\[\Gamma_r=\left\{(x_2,\cdots,x_n): x_2,\cdots,x_k\ge 0, \sum_{j=2}^k (x_j+\frac{r}{n-1})^2+\sum_{j=k+1}^n x_j^2 =r^2\right\},\]
and $T_r$ be the set inside $B_r(0)$, where at least one of $x_j$, $j=2,\cdots,k$, is equal to $0$. Notice that $\Gamma_r$ meets each lateral face $\{x_j=0\}$, $2\le j\le n$, at an acute interior angle. We then define the space $C_0^{2,\alpha}(D_r)$ to be functions $u\in C^{2,\alpha}(D_r)$ such that $u=0$ on $\Gamma_r$. For $u\in C^{2,\alpha}(D_r)$, let $F_u$ be the hypersurface defined by $x_1=u(x_2,\cdots,x_n)$ in $M$. We prove the following foliation result:

\begin{lemm}\label{lemma.existence.local.foliation}
    There exists $\ep_0>0$, such that for any $r,s\in (-\ep_0,\ep_0)$, $f\in C^{2,\alpha}(D_r)$ with $|f|_{C^{2,\alpha}(D_r)}<\ep_0$, and $t\in [-r/2,r/2]$, there exists a function $u=u_{r,s,f,t}\in C^{2,\alpha}(D_r)$, such that $u=f+t$ on $\Gamma_r$, and the hypersurface $F_{u}$ is a free boundary hypersurface in $M$ whose mean curvature is $s$ with respect to the outward unit normal. Moreover, for each fixed choice of $r,s,f$, $\{F_u\}_{t\in [-r/2,r/2]}$ is a gives a foliation of a neighborhood of $p$.
\end{lemm}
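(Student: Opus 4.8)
The plan is to realize $F_u$ as the graph $\{x_1=u(x')\}$, $x'=(x_2,\dots,x_n)$, where $u$ solves a quasilinear mixed boundary value problem on $D_r$: $\mathcal{M}_g[u]=s$ in $D_r$, the free boundary condition $\mathcal{B}_j[u]=0$ on $T_r\cap\{x_j=0\}$ for $j=2,\dots,k$, and the Dirichlet condition $u=f+t$ on $\Gamma_r$. Here $\mathcal{M}_g$ is the mean-curvature operator — quasilinear, uniformly elliptic, with coefficients controlled by $g$ and $Dg$ — and $\mathcal{B}_j[u]=0$ says the $g$-normal of the graph is tangent to $\{x_j=0\}$; in the coordinates of Lemma \ref{lemma.local.foliation.2}, where $g_{1j}$ vanishes on the faces $F_B$ and $F_j$ (they meet orthogonally), this collapses to the \emph{homogeneous} oblique equation $\sum_{k\ge2}g^{jk}(x',u)\,\partial_k u=0$ on $\{x_j=0\}$ — genuinely oblique since the coefficient of $\partial_j u$ is $g^{jj}>0$, and with $\mathcal{B}_j[0]=0$ exactly. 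I would solve this system for $u-(f+t)\in C^{2,\alpha}_0(D_r)$.

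I would first rescale to a fixed domain: since $D_r=rD_1$, the dilation $x=ry$ replaces $g$ by $g^{(r)}(y)=g(ry)$, $s$ by $rs$, and the Dirichlet datum by $f(r\,\cdot)+t$, all of which converge as $r\to0$, uniformly in the relevant H\"older norms, to the frozen model problem: constant metric $\bar g=g(p)$, zero prescribed mean curvature, free boundary on $T_1$, constant datum. The flat slice $u\equiv0$ solves this model exactly — in the flat wedge metric $\bar g$ the hyperplane $\{y_1=0\}$ is totally geodesic and, by construction of the coordinates, orthogonal to each lateral face — and its linearization is a mixed Dirichlet--oblique problem for a constant-coefficient elliptic operator on $D_1$. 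That linear problem is injective by the maximum principle (as $\Gamma_1\ne\emptyset$) and, by the Schauder theory for such problems on polyhedral domains in Appendix B, an isomorphism onto the corresponding product of H\"older spaces. This is exactly where the hypothesis that every interior angle of $P_0$ is $\le\pi/2$ is used: it controls the opening angles of the edges of $D_1$ along which two faces $\{y_i=0\},\{y_j=0\}$ meet, which is what the Schauder estimates up to those edges require; the shift by $r/(n-1)$ in the definition of $D_r$ serves the analogous purpose for the edges where $\Gamma_r$ meets the lateral faces, keeping those angles acute.

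Existence then comes from the implicit function theorem based at $u\equiv0$ in the model. For $r<\epsilon_0$ the true rescaled problem differs from the model only by operator-coefficient errors of size $O(r)$, a right-hand side $rs=O(r\epsilon_0)$, a mean-curvature error $\mathcal{M}_{g^{(r)}}[0]=O(r)$, and Dirichlet data of size $O(\epsilon_0)$, while the free-boundary condition stays homogeneous ($\mathcal{B}^{g^{(r)}}_j[0]=0$) — all uniformly in $r$. Hence the linearization of the true problem is still an isomorphism with uniformly bounded inverse, and the implicit function theorem produces, for each $(r,s,f,t)$ with $|s|,|f|_{C^{2,\alpha}},|t|,r<\epsilon_0$, a unique small solution depending $C^1$-smoothly on $(s,f,t)$; undoing the dilation gives $u=u_{r,s,f,t}\in C^{2,\alpha}(D_r)$ with $u=f+t$ on $\Gamma_r$ and $F_u$ a free boundary hypersurface of mean curvature $s$. (Equivalently one may run a continuity method in $s$ and $t$ with the a priori estimates of Appendix B.)

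For the foliation property, fix $r,s,f$ and set $w=\partial_t u_{r,s,f,t}$; differentiating the three equations in $t$ shows $w$ solves the homogeneous linearized problem $L_u w=0$ in $D_r$, $\mathcal{B}'_j[u]w=0$ on $T_r$, $w=1$ on $\Gamma_r$. Since $\diam D_r=O(r)$, the solution operator of this linear problem is $C^0$-close to the model one, whose solution with these data is $w\equiv1$; thus $w\to1$ uniformly as $r\to0$, and in particular $w=\partial_t u>0$ on $\overline{D_r}$ for $r<\epsilon_0$ (after shrinking $\epsilon_0$ if needed). Therefore $t\mapsto u_{r,s,f,t}$ is strictly increasing, the graphs $F_{u_{r,s,f,t}}$ are pairwise disjoint, and as $t$ ranges over $[-r/2,r/2]$ they sweep out a full neighborhood of $p$, giving the claimed foliation. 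The main obstacle throughout is the solvability and $C^{2,\alpha}$-up-to-the-edges regularity of the mixed Dirichlet--oblique problem on the non-smooth domain $D_r$ — choosing $C^{2,\alpha}_0(D_r)$ correctly and invoking the acute-angle hypothesis in the Schauder estimates of Appendix B; everything else is a routine uniform perturbation argument.
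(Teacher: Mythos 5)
Your proposal follows essentially the same route as the paper's proof: rescale to the unit-scale domain, apply the implicit function theorem around the flat model slice using the mixed Dirichlet/oblique (Neumann) Schauder theory of Appendix B (which is exactly where the angle $\le \pi/2$ hypothesis enters), and obtain the foliation from positivity of $\partial_t u$, which is close to its model value $1$ for small $r$ and $f$. The only cosmetic discrepancy is the bookkeeping for $t$: under the dilation the graph function and Dirichlet datum scale by $r^{-1}$, so the constant part becomes $t/r\in[-1/2,1/2]$, which is not small; as in the paper one should therefore linearize at the exact constant slices $u\equiv t/r$ (solutions of the model by translation invariance) rather than treat $f+t$ as a small perturbation of $u\equiv 0$ --- an immediate fix that does not affect the rest of your argument.
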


\begin{proof}
    We use the implicit function theorem. Consider the rescaled manifold $M_r=r^{-1}(M-p)$. Clearly $M_r$ is a Riemannian polyhedron, and we can define the corresponding domains $D_1,\Gamma_1$, and the diffeomorphism $\Phi:D_1\rightarrow M_r\cap B_1(0)$ analogously. Define a map
    \[h:\RR\times \RR\times C_0^{2,\alpha}(D_1)\times C^{2,\alpha}(D_1)\rightarrow C^{0,\alpha}(D_1)\]
    by letting $h(r,t,v,f)=H_{v+f+t}-sr$, where $H_{v+f+t}$ is the mean curvature of $F_{v+f+t}$ with respect to outward unit normal. Also,
    \[\xi: \RR\times \RR\times C_0^{2,\alpha}(D_1)\times C^{2,\alpha}(D_1)\rightarrow C^{1,\alpha}(T_1)\]
    by letting $\xi(r,t,v,f)=\bangle{\eta_{M},\eta_{F_{v+f+t}}}$, where $\eta_M, \eta_{F_{v+f+t}}$ are the outward unit conormal of $M$, $F_{v+f+t}$ along $T_1$, respectively. Finally, define $\Theta = h\times \xi$.
    
    It is checked in the appendix of \cite{white1987space} that $\Theta$ is a $C^1$ map between the Banach spaces. Since when $r\rightarrow 0$, $M_r$ converges in $C^{2,\alpha}$ Cheeger-Gromov sense to an Euclidean polyhedron, hence we have the following linearized operator:
    \[D_4(0,t,0,0)(v)=Lv=\left(\Delta v, \pa{v}{\eta},\right).\]
    Clearly $v\mapsto Lv$ has trivial kernel in $C_0^{2,\alpha}(D_1)$. Notice that $D_1=B_1(0)\cap (W_2\times [0,\infty)^{k-2}\times \RR^{n-k-2})$ is an Euclidean polyhedron, where we assumed that the dihedral angle of $W_2$ is less than or equal to $\pi/2$. By the regularity theory developed in Appendix B, choosing $\alpha\in (0,1)$ properly,
    \[\|v\|_{2,\alpha,D_1}\le C(\|\Delta v\|_{0,\alpha,D_1}+\|v_\eta\|_{T_1}).\]
    
    Thus, by the implicit function theorem, for each $t\in [-1/2,1/2]$ and sufficiently small $r$ and $\|f\|_{2,\alpha,D_1}$, there exists a function $v=v_{r,f,t}$ such that $F_v$ meets $\partial M$ orthogonally and has mean curvature $sr$. Define $u_{r,s,f,t}=v_{r,f,t}+f+t$. Note that when $r=0$, $f=0$, $u=t$ for each $t\in [-1/2,1/2]$. Since $\Theta$ is a $C^1$ map, $\pa{u}{t}>0$ for small $r$, $f$. Thus $F_u$ gives a foliation in a neighborhood of $p$.
\end{proof}

Now we prove Theorem \ref{theo.strong.maximum.principle}.

\begin{proof}[Proof of Theorem \ref{theo.strong.maximum.principle}]
    It suffices to prove that if  $\spt \Sigma$ contains a point $p\in F_B$, it contains a neighborhood of $p$ in $F_B$. By \cite{SolomonWhite1989strong}, we only need to consider the case when $p\in \partial F_B$. For $r$ sufficiently small, consider a small neighborhood $\Phi(D_r\times [-r,r])$ of $p$. Assume, for the sake of contradiction, that $\spt \Sigma$ and $F_B$ does not coincide. Then there is $r$ sufficiently small, such that $\Gamma_r\times \{0\}$ is not entirely contained in $\spt \Sigma$. Take a function $f\in C^{2,\alpha}(\Gamma_r)$, $f\ge 0$ (but not identically $0$), and such that $\spt f\cap \spt \Sigma=\emptyset$. Now extend $f$ to $C^{2,\alpha}(D_r)$, and by slight abuse of notation, we denote the extension also by $f$. 
    
    Taking $r$, $\|f\|_{2,\alpha,D_r}$ small, we can apply Lemma \ref{lemma.existence.local.foliation} and find a local foliation $F_{u_{r,s,f,t}}$. Fix a choice of $r$. Since $\spt \Sigma$ lies strictly above $F_B$ over $\Gamma_r\cap \spt f$, by possibly replacing $f$ with $\ep f$ where $\ep>0$ is small enough, $\spt \Sigma$ also lies above $F_{u_{r,s,f,0}}$ over $\Gamma_r$. Fix this choice of $f$. Let $u_{s,t}=u_{r,s,f,t}$. Since $u_{0,0}\ge 0$ (but not identically 0) on $\Gamma_r$, by the Hopf maximum principle for minimal surfaces, $u_{0,0}>0$ in $D_r\setminus \Gamma_r$. In particular, $u_{0,0}(0)>0$. Now fix a choice of $s>0$ such that $u_{s,0}(0)>0$. 
    
    Consider the foliation $\{F_{u_{r,s,f,t}}\}_{t\in [-1/2,1/2]}$ for $r,s,f$ chosen as above. Let $u_t=u_{r,s,f,t}$. We see that each leaf $F_{u_t}$ is mean convex with respect to the outward unit normal, and $F_{u_0}$ lies above $\spt \Sigma$ over $p$, and at the same time, lies below $\spt \Sigma$ over $\Gamma_r$. Define $t_0$ be the smallest value of $t$ for which $F_{u_t}$ intersects $\spt \Sigma$. Then $t_0<0$. We observe that $\spt \Sigma$ and $F_{u_{t_0}}$ has an intersection in $D_r\setminus \Gamma_r$, but $F_{u_{t_0}}$ is strictly mean convex. This contradicts Proposition \ref{proposition.weak.maximum.principle}.
    
\end{proof}

\subsection{Curvature estimates for free boundary area minimizing hypersurfaces}
We now establish curvature estimates for free boundary minimizing hypersurfaces in a Riemannian polyhedron. This will be used in a compactness argument later in the proof of Theorem \ref{theo.dihedra.rigidity}. The proof is a rescaling argument together with the regularity theory in Theorem \ref{theo.regularity.of.free.boundary.minimizing.surface}.

\begin{theo}\label{theo.curvature.estimate}
    Suppose $(M,g)$ satisfies the same assumptions as in Theorem \ref{theo.dihedra.rigidity}, $(\Sigma,\partial \Sigma)\subset (M,\partial M)$ is a properly embedded free boundary area minimizing hypersurface. Then
    \[\sup |A_\Sigma|\le C(M,g),\]
    where $C>0$ is a constant depending only on $(M,g)$. Moreover, for a compact family of choices of $C^{2,\alpha}$ metrics $g$, the constant $C$ can be chosen uniformly.
\end{theo}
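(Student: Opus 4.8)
The plan is to prove this curvature estimate by a standard blow-up/contradiction argument, using the regularity theory of Theorem \ref{theo.regularity.of.free.boundary.minimizing.surface} (and its Riemannian, interior-plus-boundary analogue) as the key local ingredient. Suppose the estimate fails. Then there is a sequence of points $p_i \in \Sigma_i$ (where $\Sigma_i$ is free boundary area minimizing in $(M, g_i)$, with $g_i$ ranging over the given compact family of $C^{2,\alpha}$ metrics) with $\lambda_i := |A_{\Sigma_i}|(p_i) \to \infty$. A point-selection argument (Schoen-style, or the standard ``worst point'' extraction of White) lets us assume $p_i$ is, up to a bounded factor, the point of largest second fundamental form in a ball whose $\lambda_i$-rescaled radius goes to infinity. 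Since the $g_i$ lie in a compact family, after passing to a subsequence $g_i \to g_\infty$ in $C^{2,\alpha}$, and since $M$ is compact we may assume $p_i \to p_\infty \in M$.

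Next I would rescale. Set $\tilde M_i := \lambda_i (M - p_i)$ with metric $\tilde g_i := \lambda_i^2 g_i$, and let $\tilde\Sigma_i$ be the corresponding rescaling of $\Sigma_i$. The rescaled surfaces remain area minimizing with free boundary. There are three cases according to the limiting local model of $M$ near $p_\infty$: (i) $p_\infty$ is an interior point, so $\tilde M_i \to \RR^n$; (ii) $p_\infty$ lies on the smooth part of a single face, so $\tilde M_i \to$ a half-space $\RR^n_+$; (iii) $p_\infty$ lies on a $k$-dimensional edge stratum, so $\tilde M_i \to$ a polyhedral cone $C_0 \times \RR^{n-k}$ of the type appearing in Theorem \ref{theo.regularity.of.free.boundary.minimizing.surface} (here the prism hypothesis that dihedral angles of $P_0$ are $\le \pi/2$ is used to keep the limiting wedge angle $\theta_0 \in (0,\pi)$, indeed $\le \pi/2$). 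By the monotonicity formula and the compactness theory for area minimizing currents with free boundary, $\tilde\Sigma_i$ converges (subsequentially, in the local flat and varifold sense, with $C^{1,\alpha}_{loc}$ convergence on compact subsets away from where regularity could fail) to a nonzero area minimizing current $\tilde\Sigma_\infty$ in the limit model, with free boundary, and with $0 \in \spt \tilde\Sigma_\infty$.

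Now the dimensional constraint $n \le 7$ enters: in cases (i) and (ii) a nonflat area minimizing cone would have a singularity, impossible for $n-1 \le 6$ (Simons); in case (iii) Theorem \ref{theo.regularity.of.free.boundary.minimizing.surface} forces the tangent cone at $0$ to be a flat half-space/polyhedral slab $C_0 \times \RR^{n-k-1}$. Either way, the density of $\tilde\Sigma_\infty$ at $0$ is $1$ (the model value), so by the $\ep$-regularity of Theorem \ref{theo.regularity.of.free.boundary.minimizing.surface}/\ref{theo.regularity.allard.type}, $\tilde\Sigma_\infty$ is a $C^{1,\alpha}$ graph near $0$, hence smooth with $|A_{\tilde\Sigma_\infty}|(0) = 0$. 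But by the graphical $C^{1,\alpha}$-with-estimates convergence, $|A_{\tilde\Sigma_i}|(0) \to |A_{\tilde\Sigma_\infty}|(0) = 0$, whereas by construction $|A_{\tilde\Sigma_i}|(0) = 1$ for all $i$ — contradiction. This proves the bound, and since the whole argument only used that $g_i$ ranged over a fixed compact family (entering through $C^{2,\alpha}$-closedness and uniform geometry bounds), the constant $C$ is uniform over such a family, which is exactly the ``moreover'' clause.

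**The main obstacle** I expect is making the blow-up convergence rigorous near $p_\infty$ when $p_\infty$ is a boundary (especially an edge) point: one must verify that the rescaled domains $\tilde M_i$ converge in the pointed $C^{2,\alpha}$ sense to the correct polyhedral model, that the free boundary condition passes to the limit, and — most delicately — that $\tilde\Sigma_i$ subconverges to a \emph{nonzero} minimizing current rather than vanishing or losing mass at $0$; this requires a uniform lower density bound at free boundary points, which in turn rests on the monotonicity formula for free boundary minimizers in locally convex domains as developed in \cite{EdelenLi2019regularity}. The interior case is classical, the single-face case is the Grüter-Jost free boundary theory, and the edge case is precisely where Theorem \ref{theo.regularity.of.free.boundary.minimizing.surface} is designed to apply; organizing these three cases uniformly (e.g.\ via the general cone $C = \Phi(C_0 \times \RR^{n-k})$ formalism, with $k$ allowed to be $0$ or $1$ to recover the interior and single-face cases) is the bookkeeping heart of the proof.
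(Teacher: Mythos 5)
Your blow-up scaffolding (rescale at points of curvature concentration, pass to a limiting minimizer in the polyhedral model, use $n\le 7$) is the same skeleton as the paper's proof, but the step that is supposed to produce the contradiction is not valid. You deduce $|A_{\tilde\Sigma_\infty}|(0)=0$ from the facts that the tangent cone of the blow-up limit at $0$ is the flat model and that $\ep$-regularity makes $\tilde\Sigma_\infty$ a $C^{1,\alpha}$ graph near $0$. Regularity at a point does not force the second fundamental form to vanish there: every regular point of a smooth minimal hypersurface has a multiplicity-one plane (or flat wedge) as tangent cone and satisfies the $\ep$-regularity hypotheses at small scales, yet $|A|$ is generically nonzero at that point. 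What the contradiction actually requires is a global Bernstein/Liouville theorem: the blow-up limit is a \emph{complete} free boundary area minimizing hypersurface in the model $C=W_0\times[0,\infty)^{k-2}\times\RR^{n-k}$ (or $\RR^n$, or a half-space), and every such hypersurface is planar, so $|A|\equiv 0$, contradicting $|A_{\tilde\Sigma_\infty}|(0)=1$. This is exactly the content of Corollary \ref{corollary.stable.bernstein} in the paper, and it is where the work lies: one first shows smallness of the area excess at a definite scale for \emph{every} minimizer through the vertex (Lemma \ref{lemma.curvature.estimates.local.epsilon}, via monotonicity and compactness), applies it to all rescalings $r^{-1}\Sigma$, invokes the Allard-type Theorem \ref{theo.regularity.allard.type} with uniform $C^{1,\alpha}$ estimates, upgrades to $C^{2,\alpha}$ by the Neumann Schauder theory of Appendix B, and concludes by scaling that $\sup_{B_{r\rho\beta/2}}|A_\Sigma|\le c_2 r^{-1}\to 0$ as $r\to\infty$. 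Your argument only examines the behavior of $\tilde\Sigma_\infty$ at small scales around $0$, which cannot rule out a nonflat complete minimizer that happens to be regular at the origin.

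A secondary gap: $C^{1,\alpha}$ graphical convergence does not give $|A_{\tilde\Sigma_i}|(0)\to|A_{\tilde\Sigma_\infty}|(0)$; convergence of second fundamental forms needs local $C^2$ convergence. The paper gets this from the normalization $|A_{\tilde\Sigma_i}|\le 1$: the rescaled surfaces are locally graphs solving a uniformly elliptic equation with Neumann boundary condition, so the Schauder estimates of Appendix B give locally uniform $C^{2,\alpha}$ bounds, hence $C^{2,\alpha}$ subconvergence, and also identify the smooth limit with the current limit (so the limit is indeed minimizing and Corollary \ref{corollary.stable.bernstein} applies). With the Bernstein step supplied and the convergence upgraded in this way, your outline becomes the paper's argument; the uniformity over a compact family of metrics then follows as you say.
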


We separate the proof into a few steps. Recall that the rescale limit of $M$ near a boundary point takes the form $C=W_0\times [0,1)^{k-2}\times \RR^{n-k}$, here $W_0\subset \RR^2$ is a wedge region with opening angle not larger than $\pi/2$. Recall also that $C_0=W_0\times [0,1)^{k-2}$.

\begin{lemm}\label{lemma.curvature.estimates.local.epsilon}
    Suppose $n\le 7$. For any $\ep>0$, there exists a constant $\beta\in (0,1)$ such that the following holds: for any free boundary area minimizing hypersurface $\Sigma\subset C\cap B_1(0)$, $0\in \Sigma$,
    \begin{equation}\label{equation.smallness.area.excess}
        \cH^{n-1}(\Sigma\cap B_\beta(0))\le (1+\ep)\cH^{n-1}((C_0\times \RR^{n-k-1})\cap B_\beta(0)).
    \end{equation}
\end{lemm}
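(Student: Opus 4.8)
The plan is to argue by contradiction and compactness, reducing matters to the regularity of free boundary area minimizers at the vertex, Theorem~\ref{theo.regularity.of.free.boundary.minimizing.surface}. Write $\vartheta_0:=\cH^{n-1}((C_0\times\RR^{n-k-1})\cap B_1(0))$ for the area of the flat model through the vertex, and for an area minimizing current $\Sigma$ put $\Theta_\Sigma(\rho):=\cH^{n-1}(\Sigma\cap B_\rho(0))/\rho^{n-1}$; since $C_0\times\RR^{n-k-1}$ is a cone, the asserted inequality at scale $\beta$ reads exactly $\Theta_\Sigma(\beta)\le(1+\ep)\vartheta_0$. If the lemma fails for some $\ep_0>0$, then for each $j\ge 2$ there is a free boundary area minimizing hypersurface $\Sigma_j\subset C\cap B_1(0)$ with $0\in\Sigma_j$ and $\Theta_{\Sigma_j}(1/j)>(1+\ep_0)\vartheta_0$, and I will reach a contradiction.

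First I would pass to a limit. Writing $\Sigma_j=\partial^*E_j$ for a Caccioppoli set $E_j$ and comparing $E_j$ with the admissible competitor $E_j\setminus B_\rho(0)$ (which agrees with $E_j$ near $\partial B_1(0)$) gives $\cH^{n-1}(\Sigma_j\cap B_\rho(0))\le\cH^{n-1}(\partial B_\rho(0))$ for a.e.\ $\rho\in(0,1)$; in particular $\cH^{n-1}(\Sigma_j)\le\cH^{n-1}(\partial B_1(0))$ for all $j$. By the compactness theorem for free boundary area minimizing currents in $C$ (as in \cite{EdelenLi2019regularity}) a subsequence converges, $\Sigma_j\to\Sigma_\infty$, where $\Sigma_\infty$ is a free boundary area minimizing hypersurface in $C\cap B_1(0)$; and, being minimizers, $\|\Sigma_j\|\to\|\Sigma_\infty\|$ as Radon measures. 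Next, the free boundary monotonicity formula in the nearly Euclidean cone $C$ provides $\Lambda=\Lambda(n,\Gamma)$ so that $\rho\mapsto e^{\Lambda\rho}\Theta_{\Sigma_j}(\rho)$ is nondecreasing on $(0,1]$; hence $\Theta_{\Sigma_j}(\rho)\ge e^{-\Lambda\rho}(1+\ep_0)\vartheta_0$ for every $\rho\in(1/j,1)$. Letting $j\to\infty$ along radii with $\|\Sigma_\infty\|(\partial B_\rho(0))=0$, and then $\rho\to 0$, the measure convergence yields $\Theta_{\Sigma_\infty}(0):=\lim_{\rho\to 0}\Theta_{\Sigma_\infty}(\rho)\ge(1+\ep_0)\vartheta_0$; in particular $0\in\spt\Sigma_\infty$.

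Finally I would contradict this by regularity. Since $n\le 7$ and $C=\Phi(C_0\times\RR^{n-k})$ with $\Gamma\le\delta^2$, Theorem~\ref{theo.regularity.of.free.boundary.minimizing.surface} applies to $\Sigma_\infty$ at $0$: near $0$ it is the graph of a $C^{1,\alpha}$ function over a copy of $C_0\times\RR^{n-k-1}$ inside $C_0\times\RR^{n-k}$. Consequently the tangent cone of $\Sigma_\infty$ at $0$ is a free boundary area minimizing cone in the model cone $C_0\times\RR^{n-k}$ which is the graph of a \emph{linear} function $L$ over $C_0\times\RR^{n-k-1}$; orthogonality of this graph along the faces of $C_0$, each of which contains the vertex, forces $L$ to vanish on $C_0$, so this tangent cone is an isometric (rotated) copy of $C_0\times\RR^{n-k-1}$ and therefore has density exactly $\vartheta_0$. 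This contradicts $\Theta_{\Sigma_\infty}(0)\ge(1+\ep_0)\vartheta_0$, and the lemma follows.

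The step I expect to be the main obstacle is the last one: it invokes the full strength of the Edelen--Li boundary regularity (equivalently, the classification of free boundary area minimizing cones in dimension $n\le 7$), which is exactly where the dimensional restriction is genuinely used. The uniform mass bound, the compactness together with convergence of masses, and the weighted monotonicity formula on the piecewise-smooth cone $C$ are routine given the corresponding results of \cite{EdelenLi2019regularity}, though some care is needed near the edges of $\partial C$.
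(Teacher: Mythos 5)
Your argument is correct and takes essentially the same route as the paper: argue by contradiction, use the free boundary monotonicity formula to propagate the excess at the small scales $\beta_j\to 0$ up to a fixed scale, pass to a limiting free boundary minimizer with convergence of mass, and contradict the $C^{1,\alpha}$ regularity of Theorem \ref{theo.regularity.of.free.boundary.minimizing.surface}, which forces the density at $0$ to equal $\cH^{n-1}((C_0\times\RR^{n-k-1})\cap B_1(0))$. Your additional care about the possible tilt of the tangent plane (only a rotation of the $\RR^{n-k}$ factor survives the Neumann condition, and such a rotation preserves the density) is a point the paper leaves implicit in asserting $T_0\Sigma=C_0\times\RR^{n-k-1}$.
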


\begin{proof}
    Suppose the contrary, that there exists a sequence of free boundary area minimizing hypersurfaces $\Sigma_j\subset C\cap B_1(0)$, $0\in \Sigma_j$, and a sequence $\beta_j\rightarrow 0$, such that
    \[\cH^{n-1}(\Sigma_j\cap B_{\beta_j}(0))\ge (1+\ep)\cH^{n-1}((C_0\times \RR^{n-k-1})\cap B_{\beta_j}(0)).\]
    Since $\Sigma_j$ is of free boundary, and that the position vector field is tangential to $\partial C$, we the monotonicity formula holds for $\Sigma_j$ for balls centered at $0$. In particular, for each $\sigma\in [\beta_j,1]$, 
    \[\cH^{n-1}(\Sigma_j\cap B_{\sigma}(0))\ge (1+\ep)\cH^{n-1}((C_0\times \RR^{n-k-1})\cap B_{\sigma}(0)).\]
    However, by standard convergence theory for free boundary area minimizing hypersurfaces, $\Sigma_j$ converge (as integral currents) to $\Sigma$. By the upper semi-continuity of density and the fact that $\Sigma$ is area minimizing, we find that 
    \[\cH^{n-1}(\Sigma\cap B_\sigma(0))=\lim_{j\rightarrow \infty} \cH^{n-1}(\Sigma_j\cap B_\sigma(0)),\]
    for a dense set of $\sigma\in (0,1)$. On the other hand, by Theorem \ref{theo.regularity.of.free.boundary.minimizing.surface}, $\Sigma$ is $C^{1,\alpha}$ regular up to its corners, and $T_0\Sigma= C_0\times \RR^{n-k-1}$, contradiction.
\end{proof}

\begin{coro}\label{corollary.stable.bernstein}
Suppose $n\le 7$, $C=W_0\times [0,1)^{k-2}\times \RR^{n-k}$, and $\Sigma\subset C$ is a free boundary area minimizing hypersurface. Then $\Sigma$ is part of a hyperplane in $\RR^n$.
\end{coro}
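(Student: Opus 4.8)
The plan is to combine the local area-excess bound of Lemma \ref{lemma.curvature.estimates.local.epsilon} with the Allard-type regularity theorem (Theorem \ref{theo.regularity.allard.type}) applied at all scales. First I would fix $\ep$ to be the constant $\ep(\theta_0,n)$ from Theorem \ref{theo.regularity.allard.type}, and let $\beta \in (0,1)$ be the corresponding constant produced by Lemma \ref{lemma.curvature.estimates.local.epsilon}. Given the free boundary area minimizing hypersurface $\Sigma \subset C = W_0 \times [0,1)^{k-2} \times \RR^{n-k}$, pick any point $q \in \Sigma$. Since $C$ is a cone (dilation-invariant about the apex of $W_0 \times [0,\infty)^{k-2}$ crossed with the linear factor $\RR^{n-k}$), for every radius $R > 0$ the rescaled surface $R^{-1}(\Sigma - \text{(base point)})$ is again a free boundary area minimizer in $C$ passing through $0$; so the hypothesis $0 \in \Sigma \cap B_1(0)$ in Lemma \ref{lemma.curvature.estimates.local.epsilon} can be arranged at every scale. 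Applying the lemma at scale $R$ and then rescaling back gives
\[
\cH^{n-1}(\Sigma \cap B_{\beta R}(q)) \le (1+\ep)\,\cH^{n-1}\big((C_0 \times \RR^{n-k-1}) \cap B_{\beta R}(0)\big)
\]
for all $R>0$ (using that the density ratio of the model $C_0 \times \RR^{n-k-1}$ is scale-invariant, being a cone). Hence Theorem \ref{theo.regularity.allard.type} applies on every ball $B_{\beta R}(q)$, yielding a $C^{1,\alpha}$ graphical representation of $\Sigma$ over $(C_0 \times \RR^{n-k-1}) \cap B_{\rho \beta R}(q)$, with the scale-invariant Hölder estimate \eqref{theo.regularity.quantitative.estimates}. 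Rescaling that estimate from radius $\rho\beta R$ back to radius $1$ and letting $R \to \infty$ forces $Du$ to be constant, so $\Sigma$ is a piece of an affine hyperplane; since $\Sigma$ is a free boundary surface in $C$ with boundary on $\partial C$, that hyperplane must be of the form $W_0 \times [0,\infty)^{k-2} \times L$ for a linear subspace $L \subset \RR^{n-k}$, i.e. part of a hyperplane in $\RR^n$.

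An alternative, perhaps cleaner, route avoids rescaling at infinity: the uniform area-ratio bound above, valid at all scales and all points, says the density ratio $\Theta(\Sigma, q, r)$ is everywhere at most $(1+\ep)$ times the model, hence (by monotonicity and the fact that the tangent cone at $0$ is the model $C_0 \times \RR^{n-k-1}$ with density exactly the model's) the density is \emph{constant} in $r$ and in $q$. Constancy of the monotone quantity forces $\Sigma$ to be invariant under dilations about each of its points, which on a connected set means $\Sigma$ is a cone about every point, hence affine; then the free boundary condition pins down the affine subspace as above. I would present whichever of these is shortest given the monotonicity formula already invoked in the proof of Lemma \ref{lemma.curvature.estimates.local.epsilon}.

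The main obstacle I anticipate is purely bookkeeping about the cone structure: one must check carefully that $C$ really is dilation-invariant (it is, since $W_0$ is a wedge with apex at the origin and $[0,\infty)^{k-2} \times \RR^{n-k}$ is a cone) and that the base point of the rescaling can be taken to be the given point $q \in \Sigma$ rather than the apex — this is what lets Lemma \ref{lemma.curvature.estimates.local.epsilon} be applied with $0 \in \Sigma$ at that point. One also needs the model density ratio $\cH^{n-1}((C_0 \times \RR^{n-k-1}) \cap B_r(0))/r^{n-1}$ to be genuinely independent of $r$, which again holds because $C_0 \times \RR^{n-k-1}$ is a cone through the origin; with that in hand the excess bound is scale-free and the argument closes. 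No new analytic input beyond Theorems \ref{theo.regularity.of.free.boundary.minimizing.surface}, \ref{theo.regularity.allard.type} and Lemma \ref{lemma.curvature.estimates.local.epsilon} is required.
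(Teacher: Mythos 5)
The step you flag as ``purely bookkeeping'' is in fact where the argument breaks. You rescale about an arbitrary point $q\in\Sigma$ and assert that $R^{-1}(\Sigma-q)$ is again a free boundary minimizer \emph{in $C$} with $0\in\Sigma$, so that Lemma \ref{lemma.curvature.estimates.local.epsilon} applies at every scale around $q$. But $C=W_0\times[0,\infty)^{k-2}\times\RR^{n-k}$ is dilation-invariant only about points of its spine $\{0\}\times\{0\}^{k-2}\times\RR^{n-k}$; if $q$ is an interior point of $C$, or lies on a single face rather than on the spine, then $R^{-1}(C-q)\neq C$, and the hypothesis ``$0\in\spt\Sigma$'' in Lemma \ref{lemma.curvature.estimates.local.epsilon} and Theorem \ref{theo.regularity.allard.type} refers to the cone vertex: the proof of the lemma runs the monotonicity formula centered there, using that the position vector field is tangential to $\partial C$, which fails for balls centered at a general $q$. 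So the excess bound $\cH^{n-1}(\Sigma\cap B_{\beta R}(q))\le(1+\ep)\cH^{n-1}((C_0\times\RR^{n-k-1})\cap B_{\beta R})$ is not justified for such $q$. The same objection hits your alternative route, where in addition a one-sided bound $\Theta\le(1+\ep)\cdot(\text{model})$ with $\ep$ the \emph{fixed} Allard constant does not force the monotone density ratio to be constant; at best one could send $\ep\to0$ (Lemma \ref{lemma.curvature.estimates.local.epsilon} does hold for every $\ep$), and even then only about the vertex.

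The repair is to rescale about the vertex only, which is exactly what the paper does: normalize $0\in\Sigma$, set $\Sigma_r=r^{-1}\Sigma$ (again a free boundary minimizer in $C$ containing $0$, since dilation about the vertex preserves $C$), apply Lemma \ref{lemma.curvature.estimates.local.epsilon} and Theorem \ref{theo.regularity.allard.type} to each $\Sigma_r$ at the fixed scale $\beta$, and transfer the uniform estimate back to $\Sigma$ on $B_{\rho\beta r}(0)$; since these balls exhaust $C$ as $r\to\infty$, every point of $\Sigma$ is eventually covered and no off-spine base points are needed. With that correction your endgame is fine and mildly different from the paper's: the paper upgrades the $C^{1,\alpha}$ graph to $C^{2,\alpha}$ via Appendix B and the Schauder estimate, obtaining $|A_\Sigma|\le c_2 r^{-1}$ on $B_{r\rho\beta/2}(0)$ and letting $r\to\infty$, whereas you conclude directly from the scale-invariance of the H\"older estimate \eqref{theo.regularity.quantitative.estimates}, whose seminorm for $Du$ decays like $r^{-\alpha}$ after rescaling back; this saves the elliptic-regularity step, but you should phrase the limit in terms of the oscillation of the unit normal of $\Sigma$ (controlled by the oscillation of $Du_r$ regardless of the graphing plane), since the reference subspace $\RR^{n-k-1}$ produced by Theorem \ref{theo.regularity.allard.type} may a priori depend on $r$.
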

\begin{proof}
    Without loss of generality assume $0\in \Sigma$, and $\nu(0)=(0,\cdots,0,1)$. For a real number $r>0$, consider the rescaled surface $\Sigma_r=r^{-1}\Sigma$. By Lemma \ref{lemma.curvature.estimates.local.epsilon}, there is a constant $\beta$ such that \eqref{equation.smallness.area.excess} holds for each $\Sigma_r$. By Theorem \ref{theo.regularity.allard.type}, there exist $\rho,\alpha>0$ such that $\Sigma_r\cap B_{\rho\beta}(0)$ is given as a graph $u_r$ with $[Du_r]_{0,\alpha,B_{\rho\beta}}<c_1$, for some constant $c_1$ independent of $r$. Since $u_r$ solves the minimal surface equation with Neumann boundary condition, by Appendix B, we have that $u_r\in C^{2,\alpha}(\overline{C})$. Thus, the Schauder estimate implies that
    \[|D^2 u|<c_2,\quad \text{in }B_{\rho\beta/2}(0).\]
    By scaling, this implies that 
    \[\sup_{B_{r\rho\beta/2}(0)}|A_{\Sigma}|<c_2r^{-1}.\]
    Taking $r\rightarrow \infty$, we have that $A_\Sigma=0$ everywhere.
\end{proof}

We now prove Theorem \ref{theo.curvature.estimate}.
\begin{proof}
    Suppose the statement is false, and there exists a sequence of free boundary area minimizing hypersurfaces $\Sigma_j$ such that $\lambda_j=\sup |A_{\Sigma_j}|\rightarrow \infty$. Note that each $\Sigma_j$ is a $C^{2,\alpha}$ regular hypersurface, thus $\sup |A_{\Sigma_j}|$ is achieved, with $|A_{\Sigma_j}|(x_j)=\lambda_j$. By possibly taking a further subsequence, we assume that $p_j\rightarrow p\in M$. Define the rescale $\eta_j$ on $M$ by letting $\eta_j(z):=\lambda_j(z-p_j)$. We then obtain a sequence of embedded area minimizing free boundary hypersurfaces
    \[(\Sigma'_j,\partial \Sigma'_j)=(\eta_j(\Sigma_j),\eta_j(\partial \Sigma_j))\subset (\eta_j (M), \partial (\eta_j M)).\]
    Moreover, $|A_{\Sigma'_j}|=1$, and $|A_{\Sigma'_j}|\le 1$ everywhere.
    
    The manifold $\eta_j (M)$, equipped with the induced Riemannian metric, converge, in the sense of $C^{2,\alpha}$ Cheeger-Gromov to a polyhedron $C\subset \RR^n$, where up to a rigid motion of $\RR^n$, $C= W_0\times [0,\infty)^{k-2}\times \RR^{n-k}$. Here $W_0$ is a two dimensional wedge whose opening angle is not larger than $\pi/2$. Since $\sup_{x\in \Sigma'_j} |A_{\Sigma'_j}|\le 1$ everywhere, locally $\Sigma'_j$ can be represented as the graph of a $C^2$ function satisfying an elliptic PDE with uniformly bounded coefficients and Neumann boundary condition. Thus, $\Sigma_j'$ converges locally smoothly as $C^{2,\alpha}$ graphs to a limit hypersurface $\Sigma_\infty$. Note that $\Sigma_\infty$ must coincide with the current limit of $\Sigma'_j$, which is a free boundary area minimizing hypersurface in $C$ (which exists by compactness of area minimizing currents). By Corollary \ref{corollary.stable.bernstein}, $\Sigma_\infty$ is planar. This contradicts the fact that $|A_{\Sigma'_j}|(0)=1$.
\end{proof}

\begin{rema}
    With a standard point picking trick, one can also obtain the following local curvature estimate: suppose $\Sigma$ is free boundary area minimizing hypersurface in $B_p(R)$, where $p\in M$, we have that 
    \[\sup_{x\in B_p(R/2)}|A_\Sigma|(x)\le C(p,R,M).\]
\end{rema}
\begin{rema}
    If one replaces the area minimizing assumption in Theorem \ref{theo.curvature.estimate} by stability, it is unclear whether a similar curvature estimates hold. For example, consider a domain $C=W_0\times \RR$ in $\RR^3$, where $W_0=\{(r,\theta): r\ge 0, \theta\in (0,\theta_0]\}$ is a non-obtuse wedge ($\theta_0\le \pi/2$). Then the plane $\Sigma :=\{\theta=\theta_1\}$ in $C$ is stable with respect to tangential variations along $\partial C$. However, an Allard type theorem fails for surfaces that are close to $\Sigma$, see \cite[Example 3.6]{EdelenLi2019regularity}. In particular, the usual blow up argument in the proof of curvature estimates fails in this situation.
    
\end{rema}

\section{Rigidity and splitting of minimal slicing}
In this section we are going to prove Theorem \ref{theo.dihedra.rigidity}. We have already seen that Theorem \ref{theo.dihedra.rigidity} follows directly from the Gauss-Bonnet theorem when $n=2$. Suppose Theorem \ref{theo.dihedra.rigidity} holds for all $(n-1)$ dimensional Riemannian polyhedron, over-prism of type $P$. We are going to prove that the same statement holds for dimension $n$. Consider the variational problem \eqref{problem.variation} and let $\Omega$ be the minimizer of it. Let $\Sigma=\Omega\cap \mathring{M}$. By Theorem \ref{theo.strong.maximum.principle}, we have either $\Sigma$ is disjoint from $F_T$ and $F_B$, in which case it is a free boundary area minimizing hypersurface, or $\Sigma$ entirely coincide with either $F_T$ (or $F_B$), in which case the face $F_T$ (or $F_B$) itself is a free boundary area minimizing hypersurface. In either case, by Theorem \ref{theo.regularity.of.free.boundary.minimizing.surface}, $\Sigma$ is $C^{2,\alpha}$ up to its corners. Moreover, $\Sigma$ is a Riemannian polyhedron, over-prism of type $P_1$, where $P_1=W_0\times [0,1]^{n-2}$. Note that since $\Sigma$ meets $\partial M$ orthogonally, its dihedral angle is everywhere equal to the constant dihedral angle given by $P_1$.

Since $\Sigma$ is free boundary and minimizing, the stability inequality implies that
\begin{equation}\label{equation.stability.inequality}
    Q(f,f)=\int_\Sigma |\nabla f|^2-(|A_\Sigma|^2+\Ric(\nu,\nu))f^2 dV-\int_{\partial \Sigma} \secondfund_{\partial M}(\nu,\nu)f^2 dS\ge 0,
\end{equation}
for any smooth function $f$ on $\Sigma$. Here $\secondfund_{\partial M}$ is the second fundamental form of $\partial M$ taken with respect to outward unit normal vector field.

By the Gauss equation, we have
\begin{equation}
    \Ric(\nu,\nu)+|A_\Sigma|^2=\frac{1}{2}(R_M-R_\Sigma+|A_\Sigma|^2),    
\end{equation}
where $R_M,R_\Sigma$ are the scalar curvature of $M,\Sigma$, respectively. The second variation form therefore becomes 
\begin{equation}
    Q(f,f)=\int_\Sigma |\nabla f|^2-\frac{1}{2}(R_M-R_\Sigma+|A_\Sigma|^2)f^2 dV-\int_{\partial \Sigma} \secondfund_{\partial M}(\nu,\nu)f^2 dS.
\end{equation}

Since $\Sigma$ is stable, the principal eigenvalue of the variational problem (see section 2 of \cite{MaximoNunesSmith2017freeboundary}, or \cite{Schoen2006minimal}) associated to the second variation form satisfies:
\begin{equation}
    \lambda_1=\inf_{\varphi\in L^2(\Sigma)}\frac{Q(\varphi,\varphi)}{\int_\Sigma \varphi^2 dV}\ge 0.
\end{equation}
The associated eigenfunction $\varphi\in W^{1,2}(\Sigma)$ satisfies, in the weak sense, the elliptic equation
\begin{equation}\label{equation.first.eigenvalue.stability.PDE}
    \begin{cases}
        \Delta_\Sigma \varphi+\frac{1}{2}(R_M-R_\Sigma+|A_\Sigma|^2)\varphi=-\lambda_1\varphi \quad&\text{ in }\Sigma,\\
        \pa{\varphi}{\eta}=\secondfund(\nu,\nu)\varphi\quad&\text{ on }\partial \Sigma.
    \end{cases}
\end{equation}

Here $\eta$ is the outward unit conormal vector field along $\partial \Sigma$. By assumption, the dihedral angles of adjacent faces of $\Sigma$ are all not larger than $\pi/2$. We then apply Proposition \ref{proposition.appendix.regularity} and conclude that the solution to \eqref{equation.first.eigenvalue.stability.PDE} is $C^{2,\alpha}$ to the corners. To do so, we verify that in the case that when two adjacent faces of $\Sigma$ meet orthogonally, the compatibility condition \eqref{equation.appendix.compatibility.condition} is satisfied. Notice that this condition is necessary for a solution to have H\"older continuous second derivatives.

\begin{lemm}\label{lemma.compatibility.condition}
    Suppose $F_i,F_j$ are two adjacent faces of $M$ meeting orthogonally, where $\Sigma\cap F_i,\Sigma\cap F_j$ are non-empty. Let $e_i,e_j$ be the outward normal vector field of $F_i,F_j$ in $M$. Then 
    \[\nabla_{e_j}(\secondfund_{F_i}(\nu,\nu))=\nabla_{e_i}(\secondfund_{F_j}(\nu,\nu)).\]
    holds along the intersection $F_i\cap F_j\cap \overline{\Sigma}$.
\end{lemm}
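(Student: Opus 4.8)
The plan is to reduce the identity to a purely local computation near a point $p\in F_i\cap F_j\cap\overline{\Sigma}$; relabel so that $i=1$, $j=2$. Refining the construction of Lemma \ref{lemma.local.foliation.2}, choose coordinates $\{x_k\}_{k=1}^n$ near $p$ with $F_1=\{x_1=0\}$, $F_2=\{x_2=0\}$, $E:=F_1\cap F_2=\{x_1=x_2=0\}$, such that $\partial_1$ is the inward unit normal of $F_1$ along $F_1$ and $\partial_2$ the inward unit normal of $F_2$ along $F_2$; thus $g_{1k}=\delta_{1k}$ on $F_1$, $g_{2k}=\delta_{2k}$ on $F_2$, the metric is block diagonal at $p$, and $e_1=-\partial_1$, $e_2=-\partial_2$ along $E$. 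Because $\Sigma$ meets both faces orthogonally, $\nu$ is tangent to $F_1$ along $\Sigma\cap F_1$ and to $F_2$ along $\Sigma\cap F_2$; in particular $\nu(p)\in T_pE$, while $e_1,e_2\in T_p\Sigma$ with $e_2\in T_p(\Sigma\cap F_1)$ and $e_1\in T_p(\Sigma\cap F_2)$, so both derivatives in the statement are well-defined ``corner derivatives.'' Throughout, let $N$ be a smooth extension of $\nu$ to a neighborhood of $p$ that is tangent to $F_1$ near $F_1$ and to $F_2$ near $F_2$, and use indices $a,b,c\in\{3,\dots,n\}$.

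Since $e_i=-\partial_i$ on $F_i$ and $\nu$ is tangent to $F_i$ along $\Sigma\cap F_i$, one has $\secondfund_{F_i}(\nu,\nu)=\sigma\langle\nabla_\nu e_i,\nu\rangle=-\sigma\langle\nabla_\nu\partial_i,\nu\rangle=-\sigma\,\phi_i\big|_{\Sigma\cap F_i}$, where $\sigma=\pm1$ is the (fixed) sign convention for $\secondfund$ and $\phi_i:=\langle\nabla_N\partial_i,N\rangle$ — valid for any extension $N$. As $e_2|_p\in T_p(\Sigma\cap F_1)\subset T_p\Sigma$ and $\phi_1|_{\Sigma\cap F_1}$, $\phi_1|_\Sigma$ agree on $\Sigma\cap F_1$, we get $\nabla_{e_2}\big(\secondfund_{F_1}(\nu,\nu)\big)=-\sigma\,e_2(\phi_1)=\sigma\,\partial_2\phi_1$ at $p$, and similarly $\nabla_{e_1}\big(\secondfund_{F_2}(\nu,\nu)\big)=\sigma\,\partial_1\phi_2$; hence the claim reduces to $\partial_2\phi_1=\partial_1\phi_2$ at $p$. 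Using $\langle\nabla_{\partial_i}N,N\rangle=\tfrac12\partial_i|N|^2=0$ and $\nabla_N\partial_i-\nabla_{\partial_i}N=[N,\partial_i]=-(\partial_iN^k)\partial_k$, one finds $\phi_i=-(\partial_iN^k)N_k$; differentiating, the second-derivative-of-$N$ terms cancel (partials commute) and the terms involving $(\partial N)g(\partial N)$ cancel by symmetry of $g$, leaving
\[\partial_2\phi_1-\partial_1\phi_2=\big[(\partial_2N^k)(\partial_1g_{kl})-(\partial_1N^k)(\partial_2g_{kl})\big]N^l\qquad\text{at }p.\]

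It remains to show the right side vanishes at $p$. Since $N^l(p)=\nu^l(p)=0$ for $l\in\{1,2\}$, only $l\ge3$ contributes; the normalization gives $\partial_jg_{1k}(p)=0$ for $j\neq1$ and $\partial_jg_{2k}(p)=0$ for $j\neq2$, while $N^1\equiv0$ on $F_1$ and $N^2\equiv0$ on $F_2$ give $\partial_2N^1(p)=\partial_1N^2(p)=0$; so the expression collapses to $\sum_{a,b}\nu^a\big[(\partial_2N^b)\mu^1_{ba}-(\partial_1N^b)\mu^2_{ba}\big]$ with $\mu^j_{ab}:=\partial_jg_{ab}(p)$. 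Next substitute $\partial_iN^b(p)=-v_i^b-\Gamma^b_{ia}(p)\nu^a$, where $v_i:=\nabla_{e_i}\nu\in T_p\Sigma$ is the variation of the unit normal along $\Sigma$ and $\Gamma^b_{ia}(p)=\tfrac12h^{bc}\mu^i_{ac}$ ($h$ the metric on $T_pE$): the terms quadratic in $\mu$ cancel after relabelling indices, leaving (up to sign) $\sum_{a,b}\nu^a\big(v_1^b\mu^2_{ab}-v_2^b\mu^1_{ab}\big)$. Finally, differentiating $\langle\nu,\partial_1\rangle=0$ along $\Sigma\cap E$ yields $\langle\nabla_X\nu,e_1\rangle=\tfrac12\mu^1(\nu,X)$ for $X\in T_p(\Sigma\cap E)$; combined with the symmetry of the second fundamental form of $\Sigma$ (applied to $e_1$ and $X$) and with $v_i\perp\nu$, this forces $v_i^b=\tfrac12h^{bc}\mu^i_{ca}\nu^a-\tfrac12\mu^i(\nu,\nu)\,\nu^b$ for $b\ge3$. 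Plugging this in, the $h^{-1}\mu^1\mu^2$-part cancels by symmetry of the indices, and the remaining part is $-\tfrac12\mu^1(\nu,\nu)\mu^2(\nu,\nu)+\tfrac12\mu^2(\nu,\nu)\mu^1(\nu,\nu)=0$, which completes the proof. The main obstacle is precisely this last stretch: one must carry the normal-variation terms $\nabla_{e_i}\nu$ through the computation and extract, from the orthogonality of $\Sigma$ to the faces \emph{along the entire intersection} (not merely at $p$) together with the symmetry of $A_\Sigma$, exactly the structural form of $v_i$ that makes everything collapse to zero.
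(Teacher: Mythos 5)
Your reduction to $\partial_2\phi_1=\partial_1\phi_2$ and the subsequent algebra are internally consistent, but the coordinate system on which the whole computation rests does not exist in general, and this is a genuine gap. You require a single $C^2$ chart near $p$ with $F_1=\{x_1=0\}$, $F_2=\{x_2=0\}$, $g_{1k}=\delta_{1k}$ at \emph{every} point of $F_1$ and $g_{2k}=\delta_{2k}$ at \emph{every} point of $F_2$, i.e.\ $\partial_1$ is the unit normal along all of $F_1$ while simultaneously $\partial_2$ is the unit normal along all of $F_2$. But differentiating $g_{12}\equiv0$ on $F_1$ in the direction $\partial_2$ (tangent to $F_1$) gives $\partial_2g_{12}(p)=0$, and differentiating $g_{22}\equiv1$ on $F_2$ in the direction $\partial_1$ (tangent to $F_2$) gives $\partial_1g_{22}(p)=0$; hence $\secondfund_{F_1}(e_2,e_2)(p)=\pm\Gamma_{22,1}(p)=\pm\tfrac12\bigl(2\partial_2g_{12}-\partial_1g_{22}\bigr)(p)=0$, and symmetrically $\secondfund_{F_2}(e_1,e_1)(p)=0$. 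So your chart can only exist when each face is geodesic in the direction of the other face's normal along the edge, which is false for general weakly mean convex $C^{2,\alpha}$ faces meeting orthogonally (think of a sphere and a plane through its center in $\RR^3$); this is exactly why Lemma \ref{lemma.local.foliation.2} only normalizes $g(\partial_i,\partial_j)$ for the face indices and not the full rows $g_{ik}$ on $F_i$. The gap is not cosmetic: the two families of vanishing derivatives you invoke ($\partial_jg_{1k}(p)=0$ for $j\neq1$ together with $\partial_jg_{2k}(p)=0$ for $j\neq2$), and also the identification $\secondfund_{F_i}(\nu,\nu)=-\sigma\phi_i$ along $\Sigma\cap F_i$ which you then differentiate in the direction $e_j$, all use the two-face normalization. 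In any admissible chart (say Fermi only off $F_1$) the discarded terms reappear and are precisely of the size $\secondfund_{F_1}(e_2,\cdot)$ and $\secondfund_{F_2}(e_1,\cdot)$, so your final cancellation is not established.

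For contrast, the paper's proof avoids any simultaneous normalization along the two faces: it only fixes a frame at the single point $p$ (extending $e_i(p),e_j(p),\nu(p)$ to a frame whose covariant derivatives vanish at $p$), applies the Codazzi equation on each face to rewrite $\nabla_{e_j}\bigl(\secondfund_{F_i}(\nu,\nu)\bigr)$ as $\nabla_\nu\bangle{\nabla_{e_j}e_i,\nu}$ (the ambient curvature corrections from Codazzi are $R(e_j,\nu,\nu,e_i)$ and $R(e_i,\nu,\nu,e_j)$, which agree by the pair symmetry of the curvature tensor and drop out of the difference), and concludes from the torsion-freeness $\nabla_{e_i}e_j=\nabla_{e_j}e_i$. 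To salvage your computational route you would have to work in a legitimate chart and carry the extra face-curvature terms explicitly through the formulas for $\partial_i N^b$, $\Gamma^b_{ia}$ and $v_i^b$, showing they cancel; as written, the argument assumes them away.
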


\begin{proof}
    Fix a point $p\in F_i\cap F_j\cap \overline{\Sigma}$. Extend $e_i(p),e_j(p),\nu(p)$ to be a geodesic normal coordinate frame in a neighborhood of $p$, such that at $p$, all covariant derivatives are zero. We apply the Codazzi equation on the hypersurface $F_i\subset M$, and obtain
    \[\nabla \secondfund_{F_i}(e_j,\nu,\nu)=\nabla \secondfund_{F_i}(\nu,e_j,\nu).\]
    Since the local coordinates is normal,
    \[\nabla_{e_j}(\secondfund_{F_i}(\nu,\nu))=\nabla \secondfund_{F_i}(e_j,\nu,\nu)=\nabla\secondfund_{F_i}(\nu,e_j,\nu)=\nabla_\nu\bangle{\nabla_{e_i}e_j,\nu}\]
    holds at $p$. Similarly, $\nabla_{e_i}(\secondfund_{F_j}(\nu,\nu))=\nabla_\nu\bangle{\nabla_{e_j}e_i,\nu}$ at $p$. Notice that $\nabla_{e_i}e_j=\nabla_{e_j}e_i$, hence the conclusion of lemma holds.
\end{proof}

Given Lemma \ref{lemma.compatibility.condition}, the solution $\varphi$ to \eqref{equation.first.eigenvalue.stability.PDE} is in $C^{2,\alpha}(\overline{\Sigma})$, by virtue of the Proposition \ref{proposition.appendix.regularity} in Appendix B.

Let $g_1$ be the induced metric on $\Sigma$. Using the induction hypothesis, we prove the following property of $\Sigma$.

\begin{prop}\label{proposition.sigma.infinitesimally.rigid}
    The hypersurface $\Sigma$, equipped with the induced metric $g_1$, is isometric to an Euclidean prism. Moreover, $\Sigma\subset M$ is infinitesimally rigid: we have that
    \[\Ric_M(\nu,\nu)=0,\quad |A_\Sigma|=0\text{ on }\Sigma,\]
    and $\secondfund_{\partial M}(\nu,\nu)=0$ on $\partial \Sigma$.
\end{prop}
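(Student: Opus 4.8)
The plan is to feed the first eigenfunction of the stability operator into a conformal deformation of $(\Sigma,g_1)$, arrange that the deformed metric satisfies hypotheses (1)--(3) of Theorem \ref{theo.dihedra.rigidity} in dimension $n-1$, apply the induction hypothesis to conclude that $(\Sigma,\tilde g_1)$ is a flat Euclidean prism, and then extract the asserted pointwise rigidity from the equality case of the conformal computation. We use throughout that, by stability, $\lambda_1\ge 0$, and that the first eigenfunction $\varphi$ of \eqref{equation.first.eigenvalue.stability.PDE} may be taken positive on $\overline\Sigma$; by Lemma \ref{lemma.compatibility.condition} and Proposition \ref{proposition.appendix.regularity} we have $\varphi\in C^{2,\alpha}(\overline\Sigma)$, so the conformal factor below is $C^{2,\alpha}$ up to the corners. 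We also use that $\Sigma$ is connected.

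\emph{The conformal deformation: interior.} Set $\tilde g_1=\varphi^{\frac{2}{n-2}}g_1$; since $m:=\dim\Sigma=n-1$ this is the power $\varphi^{\frac{2}{m-1}}$, not the conformal-Laplacian power $\varphi^{\frac{4}{m-2}}$. Substituting the interior equation $\Delta_\Sigma\varphi=-\tfrac12(R_M-R_\Sigma+|A_\Sigma|^2)\varphi-\lambda_1\varphi$ into the standard conformal change formula for the scalar curvature, the $R_\Sigma$ terms cancel and one obtains
\begin{equation*}
\tilde R_\Sigma=\varphi^{-\frac{2}{n-2}}\Bigl(R_M+|A_\Sigma|^2+2\lambda_1+\tfrac{n-1}{n-2}\,\tfrac{|\nabla_\Sigma\varphi|^2}{\varphi^2}\Bigr)\ge 0 ,
\end{equation*}
using $R_M\ge 0$, $|A_\Sigma|^2\ge 0$, $\lambda_1\ge 0$. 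The point of the exponent $\tfrac{2}{n-2}$ is exactly that it kills the intrinsic curvature term while keeping the coefficient $\tfrac{n-1}{n-2}>0$ of the gradient term positive. (For $n=3$, i.e.\ $m=2$, the same identity holds with $\tilde R_\Sigma=2\tilde K$.)

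\emph{The conformal deformation: boundary and angles.} Since $\Sigma$ meets $\partial M$ orthogonally, the outward unit conormal $\eta$ of $\partial\Sigma\subset\Sigma$ coincides along $\partial\Sigma$ with the outward unit normal of $\partial M\subset M$, and tracing the second fundamental form of $\partial M$ over $T\partial M=T\partial\Sigma\oplus\RR\nu$ gives $H^\Sigma_{\partial\Sigma}=H_{\partial M}-\secondfund_{\partial M}(\nu,\nu)$, where $H^\Sigma_{\partial\Sigma}$ is the mean curvature of $\partial\Sigma$ in $(\Sigma,g_1)$ and $H_{\partial M}\ge 0$ by hypothesis (2). Using the Robin condition $\partial_\eta\varphi=\secondfund_{\partial M}(\nu,\nu)\varphi$ of \eqref{equation.first.eigenvalue.stability.PDE} together with $(m-1)\tfrac{1}{m-1}=1$, the conformal change formula for the boundary mean curvature produces the crucial cancellation
\begin{equation*}
\tilde H^\Sigma_{\partial\Sigma}=\varphi^{-\frac{1}{n-2}}\bigl(H^\Sigma_{\partial\Sigma}+\secondfund_{\partial M}(\nu,\nu)\bigr)=\varphi^{-\frac{1}{n-2}}H_{\partial M}\ge 0 .
\end{equation*}
Finally, conformal changes preserve angles, so the dihedral angles of $(\Sigma,\tilde g_1)$ are unchanged and hence are still equal to, in particular $\le$, those of the model prism $P_1$. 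Thus $(\Sigma,\tilde g_1)$ is a Riemannian polyhedron, over-prism of type $P_1$, with a $C^{2,\alpha}$ metric satisfying conditions (1)--(3) of Theorem \ref{theo.dihedra.rigidity} in dimension $n-1$.

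\emph{Invoking induction and reading off rigidity.} By the induction hypothesis $(\Sigma,\tilde g_1)$ is isometric to a Euclidean prism; in particular it is flat, so $\tilde R_\Sigma\equiv 0$ (and its faces are totally geodesic, so $\tilde H^\Sigma_{\partial\Sigma}\equiv 0$). Plugging $\tilde R_\Sigma\equiv 0$ into the displayed formula, each nonnegative summand vanishes: $R_M=0$ on $\Sigma$, $|A_\Sigma|\equiv 0$, $\lambda_1=0$, and $\nabla_\Sigma\varphi\equiv 0$. Since $\Sigma$ is connected, $\varphi$ is a positive constant, so $g_1=c\,\tilde g_1$ for some $c>0$; hence $(\Sigma,g_1)$ is homothetic to a Euclidean prism, i.e.\ isometric to a Euclidean prism, and in particular $R_\Sigma\equiv 0$. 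As $\varphi$ is constant, the Robin condition forces $\secondfund_{\partial M}(\nu,\nu)=0$ on $\partial\Sigma$. Finally the Gauss equation $|A_\Sigma|^2+\Ric_M(\nu,\nu)=\tfrac12(R_M-R_\Sigma+|A_\Sigma|^2)$, evaluated with $|A_\Sigma|\equiv 0$, $R_M\equiv 0$ on $\Sigma$ and $R_\Sigma\equiv 0$, gives $\Ric_M(\nu,\nu)\equiv 0$. This is the claimed infinitesimal rigidity. The only step that is not a routine Schoen--Yau-type computation is the boundary term, where the extrinsic relation $H^\Sigma_{\partial\Sigma}=H_{\partial M}-\secondfund_{\partial M}(\nu,\nu)$ must dovetail precisely with the Robin boundary condition of $\varphi$; the genuinely delicate analytic inputs --- the $C^{2,\alpha}$ regularity of $\Sigma$ (Theorem \ref{theo.regularity}) and of $\varphi$ (Lemma \ref{lemma.compatibility.condition} and Appendix B) up to the corners --- have already been secured.
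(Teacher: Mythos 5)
Your proposal is correct and follows essentially the same route as the paper's proof: the conformal deformation $\varphi^{\frac{2}{n-2}}g_1$ by the positive first eigenfunction, the boundary cancellation coming from orthogonality of $\Sigma$ with $\partial M$ together with the Robin condition, the induction hypothesis applied to the deformed metric, and the tracking of equalities to extract the pointwise rigidity. The only deviations are cosmetic: your interior formula carries $2\lambda_1$ where the paper writes $\lambda_1$ (immaterial since $\lambda_1\ge 0$ and it vanishes in the equality case), and you spell out the final steps $\secondfund_{\partial M}(\nu,\nu)=0$ via the Robin condition and $\Ric_M(\nu,\nu)=0$ via the Gauss equation a bit more explicitly than the paper does.
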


\begin{proof}
    Let $\varphi$ be a positive function on $\Sigma$, under the conformal change of metric 
    \[g_2=\varphi^{\frac{2}{n-2}}g_1,\]
    the scalar curvature changes by
    \begin{equation}\label{equation.conformal.change.scalar.curvature}
        R(g_2)=\varphi^{-\frac{n}{n-2}}\left(-2\Delta\varphi+R(g_1)\varphi+\frac{n-1}{n-2}\frac{|\nabla\varphi|^2}{\varphi}\right);
    \end{equation}
    Where $\Delta$ and $\nabla$ are taken with respect to the metric $g_1$. The mean curvature of $\partial \Sigma\subset \Sigma$ with respect to the outward unit conormal vector field changes via
    \begin{equation}\label{equation.conformal.change.mean.curvature}
        H_{\partial \Sigma}(g_2)=\varphi^{-\frac{1}{n-2}}\left(H_{\partial \Sigma}(g_1)+\frac{1}{\varphi}\pa{\varphi}{\eta}\right).
    \end{equation}
    
    Now we choose $\varphi>0$ to be the solution to \eqref{equation.first.eigenvalue.stability.PDE}. Then the scalar curvature satisfies
    \begin{equation}\label{equation.conformal.change.scalar.curvature.simplified}
        R(g_2)=\varphi^{-\frac{n}{n-2}}\left((R_M+|A_\Sigma|^2+\lambda_1)\varphi+\frac{n-1}{n-2}\frac{|\nabla\varphi|^2}{\varphi}\right)\ge 0.
    \end{equation}
    
    The boundary mean curvature becomes
    \[H_{\partial \Sigma}(g_2)=\varphi^{-\frac{1}{n-1}}(H_{\partial \Sigma}(g_1)+\secondfund(\nu,\nu)).\]
    Let $\{e_j\}_{j=1}^{n-2}$ be an orthonormal frame in an open set of $\partial \Sigma$. Since $\Sigma$ meets $\partial M$ orthogonally, the conormal vector $\eta$ of $\partial \Sigma$ in $\Sigma$ is the same as the conormal vector of $\partial M$ in $M$. Therefore $e_j$, $j=1,\cdots, n-2$, $\nu$ and $\eta$ forms an orthogonal basis in an open neighborhood of $\partial M$. Therefore, we verify that
    \begin{equation}
            H_{\partial \Sigma}(g_1)+\secondfund(\nu,\nu)=\sum_{j=1}^{n-2}\bangle{\nabla_{e_j}e_j,\eta}+\bangle{\nabla_\nu \nu,\eta}=\overline{H}_{\partial M}.
    \end{equation}
    Here $\overline{H}_{\partial M}$ represents the mean curvature of $\partial M$ in $M$, with respect to the outward unit normal vector field $\nu$. We therefore conclude that
    \begin{equation}\label{equation.conformal.change.mean.curvature.simplified}
        H_{\partial \Sigma}(g_2)=\varphi^{-\frac{1}{n-2}}\overline{H}_{\partial M}\ge 0.
    \end{equation}
    
    Observe that the dihedral angles of $\Sigma$ are equal to the dihedral angles of $M$, since $\Sigma$ meets $\partial M$ orthogonally. Moreover, the conformal deformation does not change dihedral angles. We therefore conclude, by \eqref{equation.conformal.change.scalar.curvature.simplified} and \eqref{equation.conformal.change.mean.curvature.simplified}, that $(\Sigma,g_2)$ is an over-prism manifold of dimension $(n-1)$ with nonnegative scalar curvature, weakly mean convex faces, and everywhere non-obtuse dihedral angles. By induction, $(\Sigma,g_2)$ is isometric to an Euclidean rectangular solid. Tracking equalities in \eqref{equation.conformal.change.scalar.curvature.simplified} and \eqref{equation.conformal.change.mean.curvature.simplified}, we conclude that, with respect to metric $g_1$:
    \[R_M=0,\quad A_\Sigma=0,\quad \lambda_1=0,\quad \nabla\varphi=0 \quad \text{ on }\Sigma,\]
    and $\overline{H}_{\partial M}=0$ on $\partial \Sigma$.
    
    In particular, $\varphi$ is a constant function. This implies that $(\Sigma,g_1)$ is also isometric to a flat Euclidean rectangular solid. The fact that $(\Sigma,g_1)$ is infinitesimally rigid then follows.
\end{proof}

We then proceed to the proof of Theorem \ref{theo.dihedra.rigidity}. The basic idea is to conformally deform the metric locally near the infinitesimally rigid hypersurface $\Sigma$, and solve the variational problem \eqref{problem.variation} in a new metric. As a result, we are able to conclude that each side of $\Sigma$ contains a dense collection of flat Euclidean rectangular solids. This important technique first appeared in \cite{CarlottoChodoshEichmair2016effective}, and were also used in \cite{ChodoshEichmairMoraru2018splitting}. Though these papers are written only for $3$ dimensional manifolds, we observe that the idea within can be extended in higher dimensions, as long as the minimizer to the relevant variational problem is regular.

\begin{proof}[Proof of Theorem \ref{theo.dihedra.rigidity}]
    Let $(M^n,g)$ be an over-prism polyhedron as in the assumption of Theorem \ref{theo.dihedra.rigidity}, $\Sigma^{n-1}\subset M$ be a free boundary area minimizing hypersurface. Suppose $M_+\subset M$ is a region separated by $\Sigma$ containing $F_T$. By the strong maximum principle (Theorem \ref{theo.strong.maximum.principle}), either $\Sigma$ is disjoint with $F_T$ and $F_B$, or $\Sigma$ completely coincides with $F_T$ or $F_B$. In the latter case, $\Sigma=F_T$ (or $F_B$), and itself is an area minimizing hypersurface. 
    
    Without loss of generality assume that $\Sigma\ne F_T$ so $M_+\ne \emptyset$. Fix $r_0>0$ small to be determined later. Fix a point $p\in M_+$, such that $\dist_g(p,\Sigma)\in (1.5r_0,2.5r_0)$ and $\dist_g(p,\partial M)>4r_0$. Denote by $r(x)$ the distance function to $p$ with respect to the metric $g$. We take $r_0$ small enough, such that the function $\pa{r}{\nu}>0$ in $\Sigma\cap B_{3r_0}(p)$.  
    
    \textbf{Claim:} There exists an $\ep>0$ and a family of Riemannian metrics $\{g(t)\}_{t\in (0,\ep)}$ on $M$ in the conformal class of $g$, with the following properties:
    \begin{enumerate}
        \item $g(t)\rightarrow g$ smoothly as $t\rightarrow 0$;
        \item $g(t)=g$ on $M\setminus B_{3r_0}(p)$;
        \item $g(t)\le g$ as metrics on $M$, with strict inequality on $ B_{3r_0}(p)\setminus B_{r_0}(p)$.
        \item $R(g(t))>0$ on $ B_{3r_0}(p)\setminus B_{r_0}(p)$.
        \item Surface $\Sigma$ is weakly mean convex and strictly mean convex at one interior point with respect to the metric $g(t)$, and the unit normal vector field pointing outward from $M_+$.
    \end{enumerate}
    
    The construction of these conformal metrics follows from an extension of the Appendix J of \cite{CarlottoChodoshEichmair2016effective}, which we describe here for the sake of completeness.
    
    Let $f\in C^\infty(\RR)$ be a non-positive function supported in $[0,3]$, such that in $(1,3)$, 
    \[f(s)=-\exp \left(\frac{16n+8}{s}\right).\]
    This is to make sure that $f$ satisfies the inequalities $f'(s)>0$, $(4n-1)f'(s)+sf''(s)<0$ for $s\in (1,3)$.
    
    Let $r_0<\inj (M,g)$ be small enough to guarantee that in any geodesic ball $B_{3r_0}(q)$, the inequality $\Delta_g (\dist^2_g(\cdot,q))<8n$ is satisfied. On $M$, define $v(x)=r_0^2 f(r(x)/r_0)$. In $B_{3r_0}(p)\setminus B_{r_0}(p)$, we then check that
    \begin{multline}
        \Delta_g v=r_0 f'\left(\frac{r}{r_0}\right)\Delta_g r+f''\left(\frac{r}{r_0}\right)\\
            <(4n-1)\frac{r_0}{r} f'\left(\frac{r}{r_0}\right)+f''\left(\frac{r}{r_0}\right)<0.
    \end{multline}
    
    The metric $g(t)$ is then defined as $g(t)=(1+tv)^{\frac{4}{n-2}}g$. We verify that conditions (1)-(5) are satisfied by $\{g(t)\}$. Conditions (1)-(3) are straightforward. To see (4), we note that
    \[R(g(t))=(1+tv)^{-\frac{n+2}{n-2}}\left(\frac{-4(n-1)t}{n-2}\Delta_g v +R(g) (1+tv) \right)>0.\]
    To see (5), we notice that along $\Sigma$, 
    \[H_{\Sigma,g(t)}=(1+tv)^{-\frac{n}{n-2}}\left(H_{\Sigma,g}+\frac{2(n-1)t}{n-2}\pa{v}{\nu}\right)\ge 0.\]
    The claim is proved.
    
    To proceed, observe that $(M_+,g(t))$ is an overcubic manifold that satisfies all the conditions satisfied by $(M,g)$. Now $M_+$ has top face $F_T$ and bottom face $\Sigma$. We consider the variational problem in $M_+$ analogous to \eqref{problem.variation}:
    \begin{multline*}\label{problem.variation.2}
        I=\inf\{\cH^{n-1}(\partial \Omega\cap \mathring{M_+})_{g(t)}:\\
         \text{ is an open set of finite perimeter containing }F_T, \Omega\cap \Sigma=\emptyset.\}
    \end{multline*}
    By the strong maximum principle (Theorem \ref{theo.strong.maximum.principle}) and the regularity theory (Theorem \ref{theo.regularity.of.free.boundary.minimizing.surface}), $I$ is achieved by an open set $\Omega_t$. Consider $\Sigma_t=\Omega_t\cap \mathring{M}$. Then $\Sigma_t$ is $C^{2,\alpha}$ to its corners. Notice that $\Sigma_t$ is disjoint from $\Sigma$ by the convexity condition (5) of the metric $g(t)$. Also $\Sigma_t$ must intersect $B_{3r_0}(p)$, otherwise we would have:
    \[|\Sigma|_{g}\le |\Sigma_t|_{g}= |\Sigma_t|_{g_t}\le |\Sigma|_{g_t}<|\Sigma|_g,\]
    contradiction (where the last inequality comes from condition (4)). Also, Proposition \ref{proposition.sigma.infinitesimally.rigid} implies that $R(g(t))\equiv 0$ on $\Sigma_t$. Since $R(g(t))>0$ in $B_{3r_0}(p)\setminus B_{r_0}(p)$, we conclude that $\Sigma_t\cap B_{r_0}(p)\ne \emptyset$. 
    
    As $t\rightarrow 0$, consider the family of hypersurfaces $\{\Sigma_t\}_{t\in (0,\ep)}$. By the curvature estimate (Theorem \ref{theo.curvature.estimate}), $\Sigma_t$ subsequentially converges in $C^{2,\alpha}$ to a free boundary area minimizing surface $\Sigma'\subset M_+$. Moreover, $\Sigma'\cap B_{r_0}(p)\ne \emptyset$. In particular, $\Sigma'$ and $\Sigma$ are disjoint.
    
    This argument, carried out on each of the free boundary minimizing hypersurface, with varying choices of the small radius $r_0$ and point $p$, implies that the whole region $M_+$ (and $M\setminus M_+$ likewise) contains a collection of free boundary area minimizing hypersurfaces $\{\Sigma^{\rho}\}_{\rho\in \cA}$, where $\cup_{\rho\in \cA} \Sigma^{\rho}$ is dense in $M_+$. By Proposition \ref{proposition.sigma.infinitesimally.rigid}, each $\Sigma^{\rho}$ is isometric to an Euclidean rectangular solid, and is also infinitesimally flat in $M$. We also observe that $\partial \Sigma^\rho$ is also dense in the boundary. Precisely, for any point $q\in \partial M^+$ and any $\ep>0$, there is a free boundary area minimizing surface $\Sigma^\rho$ that intersects $B_\ep(q)\cap M_+$, $\rho\in \cA$. By the curvature estimate (Theorem \ref{theo.curvature.estimate}), $\Sigma^\rho$ must also intersects $\partial M^+$ in some $B_{C\ep}(q)$, for some uniform constant $C$ (independent of the choice of $q$ and $\ep$).
    
    Pick a hypersurface $\Sigma^\rho$. For notational simplicity, assume without loss of generality that $\rho=0$ and $\Sigma^\rho=\Sigma$. Extend the unit normal vector field $\nu$ of $\Sigma$ to a smooth vector field $X$, such that $X$ is tangential on $\partial M$. Let $\phi_t$ be the one-parameter family of diffeomorphisms generated by $\phi_t$. For $\rho$ sufficiently small, $\Sigma^\rho$ can be represented as a graph
    \[\{\phi_{u^\rho(x)}(x):x\in \Sigma\}.\]
    Fix a point $x_0\in \Sigma$. Arguing as (2) in \cite{liu2013three-manifolds}, the functions $\{u^\rho(x)/u(x_0)\}$ converges in $C^{2,\alpha}(\overline{\Sigma})$, as $\rho\rightarrow 0$, to a function $u$ that satisfies
    \[(\nabla_\Sigma^2 u)(X,Y)+Rm_M (\nu,X,Y,\nu)=0,\quad \text{for all tangential vectors } X,Y.\]
    Taking trace, we have $\Delta_\Sigma u=0$. Also, since each $\Sigma^\rho$ meets $\partial M$ orthogonally, we have $\pa{u}{\eta}=0$ on $\partial \Sigma$. We therefore conclude that $u=0$ on $\Sigma$, and hence $Rm_M(\nu,X,Y,\nu)=0$. Using the Gauss-Codazzi equations on $\Sigma$, and the fact that $|A_\Sigma|=0$, we then obtain that $Rm_M=0$ on $\overline{\Sigma}$. By density of $\{\Sigma^\rho\}$, we conclude that $Rm_M=0$ in $M_+$. By Proposition \ref{proposition.sigma.infinitesimally.rigid}, $\secondfund=0$ on each face of $\partial\Sigma^\rho$. Since $\Sigma^\rho$ is dense in $\partial M_+$, we conclude that each face of $\partial M_+$ is totally geodesic. This implies that $M_+$ is isometric to an Euclidean prism. Combined with a similar argument for $M\setminus M_+$, we conclude that $(M,g)$ is isometric to an Euclidean prism.
\end{proof}

\begin{rema}
    We remark that the geometric argument in section 4 is very robust, and can potentially be used for Conjecture \ref{conj:dihedral.rigidity} for different polytopes, as long as the corresponding variational problem produces a $C^{2,\alpha}$ hypersurface. For instance, if the minimizer to \eqref{problem.variation.capillary} is $C^{2,\alpha}$ to its corners, then one can establish Conjecture \ref{conj:dihedral.rigidity} for $n$-dimensional simplices.
\end{rema}

\section{Application to the positive mass theorem}
In this section we observe that Conjecture \ref{conj:dihedral.rigidity} implies the positive mass theorem in a rather straightforward manner. In fact, it would be enough to assume Conjecture \ref{conj:dihedral.rigidity} holds for a \textit{single} Euclidean polyhedron $P$:

\begin{conj}\label{conj.dihedral.rigidity.for.P}
Suppose Conjecture \ref{conj:dihedral.rigidity} holds for $P$. That is, for any Riemannian polyhedron $(M,g)$ admitting a degree one map onto $P$, conditions (1)-(3) in Conjecture \ref{conj:dihedral.rigidity} imply that $(M,g)$ is flat. 
\end{conj}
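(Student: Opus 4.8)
For a general polytope the statement is just Conjecture \ref{conj:dihedral.rigidity} specialized to the single $P$, and in that generality it is open; the point being isolated here is only that this weak-looking special case already suffices for the positive mass theorem (Section 5). In the range treated in the present paper it is, however, a theorem: when $P=P_0\times[0,1]^{n-1}$ is an Euclidean prism with $2\le n\le 7$ it is nothing other than Theorem \ref{theo.dihedra.rigidity}. The only point to verify in reducing to that theorem is that the hypothesis ``$M$ admits a degree one map onto $P$ compatible with conditions (1)--(3)'' already forces the over-prism structure of Definition \ref{definition.riemann.prism}: for the dihedral comparison in (3) along $F_i\cap F_j$ to be meaningful one needs $\Phi$ to carry the face poset of $M$ into that of $P$ preserving incidences, and a standard multiplicativity-of-degree argument for maps of manifolds with corners then shows that, after discarding the components on which $\Phi$ drops dimension, the restriction of $\Phi$ to each $k$-face of $M$ is a degree one map onto a $k$-face of $P$. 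Once this is in place, Theorem \ref{theo.dihedra.rigidity} gives that $(M,g)$ is isometric to a flat Euclidean prism, a fortiori flat.

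\textbf{Plan for a general $P\in\cP_n$.} For $P$ in the class $\cP_n$ of the outline --- in particular for an arbitrary $n$-simplex --- I would carry out the same induction on $n$ as in the proof of Theorem \ref{theo.dihedra.rigidity}, with the area functional replaced by the capillary energy \eqref{problem.variation.capillary}, whose critical level sets meet the lateral faces $F_j$ at the model contact angles $\gamma_j$ rather than orthogonally. First I would realize a minimizer $\Omega$ of \eqref{problem.variation.capillary} in the class $\cC$ and prove, via an angle-weighted version of the strong maximum principle (Theorem \ref{theo.strong.maximum.principle}), that $\Sigma=\partial\Omega\cap\mathring M$ is disjoint from the barrier faces $F_T,F_B$, so that $\Sigma$ is a genuine capillary minimal hypersurface carrying a degree one map onto an $(n-1)$-dimensional leaf similar to a fixed $P_{n-1}\in\cP_{n-1}$. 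Next I would establish $C^{2,\alpha}$ regularity of $\Sigma$ up to its corners; then, from the second variation of the capillary functional --- whose boundary integrand records the geodesic curvature of $\partial M$ together with the contact-angle terms --- I would extract a stability inequality and run the conformal change exactly as in Proposition \ref{proposition.sigma.infinitesimally.rigid}, so that by the inductive hypothesis $\Sigma$ is flat and infinitesimally rigid ($\Ric_M(\nu,\nu)=0$, $A_\Sigma=0$, and the relevant boundary term vanishes). Finally I would feed this into the Carlotto--Chodosh--Eichmair local conformal perturbation argument of Section 4 \cite{CarlottoChodoshEichmair2016effective} to produce a dense family of flat capillary slices, concluding $Rm_M\equiv0$ and that every face of $\partial M$ is totally geodesic, hence $(M,g)$ is a flat Euclidean polyhedron.

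\textbf{The main obstacle.} The genuinely missing ingredient --- and the reason the statement remains a conjecture outside the prism case --- is the regularity step above: one needs interior \emph{and} boundary $C^{2,\alpha}$ regularity, up to the Lipschitz edges, for energy-minimizing \emph{capillary} hypersurfaces in a polyhedron whose boundary is only piecewise $C^{2,\alpha}$. The free-boundary case ($\gamma_j=\pi/2$) is exactly what Theorem \ref{theo.regularity.of.free.boundary.minimizing.surface} supplies in dimensions $\le 7$, but for general contact angles the sharpest available result \cite{DePhilippisMaggi15regularity} gives only codimension-two partial regularity in $C^{1,1}$ domains, which is insufficient here because the induction requires $\Sigma$ itself to reappear as a sufficiently smooth polyhedron still satisfying (1)--(3). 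Closing this gap --- either by proving the expected sharp up-to-the-corner regularity for the capillary problem under the non-obtuseness assumption on $P_0$, or by pushing the slicing through the anticipated codimension-$\ge 3$ singular sets in the spirit of \cite{SchoenYau2017positive} --- would upgrade the present conjecture to a theorem for all $P\in\cP_n$ with $n\le 7$.
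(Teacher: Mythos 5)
You have correctly recognized that this statement is an assumption rather than a result proved in the paper: it is Conjecture \ref{conj:dihedral.rigidity} specialized to a single polyhedron $P$, established here only when $P$ is a prism (where it is exactly Theorem \ref{theo.dihedra.rigidity}), and your plan for general $P\in\cP_n$ --- capillary slicing via \eqref{problem.variation.capillary} with up-to-the-corner regularity of capillary minimizers as the missing ingredient --- is precisely the paper's own heuristic and stated obstacle in Section 1.2. The one caution is your reduction step: the face-by-face degree one condition of Definition \ref{definition.riemann.prism} is built into the hypothesis (in Conjecture \ref{conj:dihedral.rigidity} the manifold is $P$ itself with a new metric, so faces map to faces tautologically), and it should not be presented as a consequence of a bare degree one map on $M$ via a ``multiplicativity of degree'' argument, which by itself does not force $\Phi$ to respect the face structure.
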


We will see that this is enough to imply the positive mass theorem for an $n$-dimensional asymptotically flat manifold. \footnote{In fact, it suffices to assume the dihedral angles of $M$ is everywhere equal to the corresponding angles of $P$, as we did in Theorem \ref{theo.dihedra.rigidity}.}

\begin{theo}
    Suppose Conjecture \ref{conj.dihedral.rigidity.for.P} holds for a certain Euclidean polyhedron $P$. Then the positive mass theorem holds: Suppose $(X^n,g)$ is asymptotically flat manifold, then the ADM mass on each end of $X$ is nonnegative.
\end{theo}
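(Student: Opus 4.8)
The plan is to argue by contradiction, using the assumed rigidity to rule out negative mass. Suppose the positive mass theorem fails, so there is an asymptotically flat $(S^n,g_0)$ with $R(g_0)\ge 0$ one of whose ends $E$ has ADM mass $m<0$. First I would invoke the standard reduction of Schoen--Yau and Lohkamp: after a small perturbation supported near infinity on $E$, which keeps $R\ge 0$ and (by continuity of the mass) keeps the mass strictly negative, one may replace $g_0$ by a metric $g$ on $S$ which, in a fixed asymptotic chart on $E$, is \emph{exactly Euclidean} outside a ball $B_R(0)$, with $R(g)\ge 0$ everywhere and $R(g)>0$ at some point of $B_R(0)$. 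In particular $(S^n,g)$ is not flat.

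Next I would place a large scaled copy of the fixed polyhedron around the core. After translating the chart so that $0$ lies in the interior of $P$, set $M=\rho P$ with $\rho$ large enough that $\overline{B_R(0)}\subset\mathring M$ (so that $\partial M\subset\{|x|>R\}$), equipped with the restriction of $g$. Then $(M,g)$ is a Riemannian polyhedron (its boundary is flat, hence smooth, near $\partial M$, and its faces meet transversely), and the dilation $x\mapsto x/\rho$ is a degree one map $M\to P$ whose restriction to each $k$-face is a degree one map onto a $k$-face of $P$. Condition (1) of Conjecture \ref{conj:dihedral.rigidity} is inherited from $g$. Since every face and every edge of $M$ lies in $\{|x|>R\}$ where $g$ agrees with the Euclidean metric, each face of $M$ is a flat piece of a hyperplane, hence totally geodesic and in particular weakly mean convex, giving (2); and every dihedral angle of $(M,g)$ equals the corresponding Euclidean dihedral angle of $\rho P$, which equals that of $P$, so (3) holds with equality.

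Conjecture \ref{conj.dihedral.rigidity.for.P} now forces $(M,g)$ to be isometric to a flat Euclidean polyhedron, so $R(g)\equiv 0$ on $M$. But $\overline{B_R(0)}\subset\mathring M$ contains a point where $R(g)>0$, a contradiction. Hence no end can carry negative mass, which is the positive mass theorem. A variant avoids Lohkamp's gluing and uses only the Schoen--Yau density theorem: reduce instead to $g$ harmonically flat near infinity, $g=u^{\frac{4}{n-2}}\delta$ with $u=1+a|x|^{2-n}$ and $a<0$; then for a face $F$ of $M=\rho P$ the conformal change formula gives $H_{F,g}=u^{-\frac{2}{n-2}}\frac{2(n-1)}{n-2}\,u^{-1}\,\partial_\nu u$ with $\nu$ outward, and since $\nabla u=(2-n)a|x|^{-n}x$ while convexity of $P$ (with $0$ in its interior) forces $x\cdot\nu>0$ on $F$ and $(2-n)a>0$, we get $\partial_\nu u>0$, so every face is strictly mean convex; dihedral angles are unchanged by the conformal factor, so again (1)--(3) hold, while $M$ contains an open set on which $g$ is the Schwarzschild metric with $a\neq 0$, whose Riemann tensor does not vanish identically, so $(M,g)$ is again not flat.

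I expect the only genuinely delicate ingredient to be the first step: the density/reduction theorem producing a metric that is exactly Euclidean (or exactly harmonically flat) near infinity while preserving $R\ge 0$ and a strictly negative mass. This is standard, but it is where all the analytic work hides. Everything after that is elementary: inheritance of $R\ge 0$, total geodesy of flat faces, conformal invariance of dihedral angles, and---in the variant---the purely convex-geometric sign of $\partial_\nu u$ on the faces of a convex polytope containing the origin.
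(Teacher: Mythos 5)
Your main argument is essentially the paper's proof: use the Schoen--Yau/Lohkamp reduction to get a non-flat metric with $R\ge 0$ that is exactly Euclidean outside a compact set, enclose that set by a large rescaled copy of $P$ lying entirely in the flat region (so faces are totally geodesic and dihedral angles are exactly the Euclidean ones), and let the assumed rigidity force flatness, a contradiction. The one point to tighten is that $M$ is the region of $S$ bounded by $\partial(\rho P)$ and contains the compact core $K$, which may carry topology and lies outside the asymptotic chart, so the degree one map is not literally the dilation $x\mapsto x/\rho$ but, as in the paper, a map sending $K$ to an interior point of $P$ and $M\setminus K$ onto its complement; your harmonically flat variant is a reasonable alternative, provided you account for the fact that the density theorem gives $u=1+a|x|^{2-n}+O(|x|^{1-n})$ rather than exact Schwarzschild, so the sign of $\partial_\nu u$ on the faces (and the non-flatness) must be checked to leading order for $\rho$ large.
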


\begin{proof}
    \footnote{The proof we give here assumes that $X$ has one end. The general case can be handled similarly by cutting the other ends along strictly mean convex hypersurfaces of $X$.}Suppose $X\setminus K$ is diffeomorphic to $\RR^n\setminus B_r(0)$ with $K$ compact, and the ADM mass is negative. By the work of \cite{SchoenYau1979positivity} and an observation of Lohkamp, there exists a non-flat metric $\tilde{g}$ on $X$ with $R(\tilde{g})\ge 0$, and $\tilde{g}$ is isometric to the Euclidean metric on $X\setminus K$. Since $P$ is an Euclidean polyhedron, one can take a sufficiently large rescale of $P$, which we also denote by $P$, such that $\partial P$ lies entirely inside $\RR^n\setminus B_R(0)$. Now $\partial P$, isometrically embedded as a boundary in $X\setminus K$, bounds a Riemannian polyhedron $M$. Observe that $M$ admits a degree one map onto $P$ by simply taking $K$ to $\{0\}$ and $M\setminus K$ to $P\setminus \{0\}$. However, $(M,\tilde{g})$ satisfies all 3 conditions in Conjecture \ref{conj.dihedral.rigidity.for.P}, thus $\tilde{g}$ should be flat, contradiction.
\end{proof}

\begin{rema}
	Our approach in the proof of ridigity statement of Theorem \ref{theo.dihedra.rigidity} implies the following alternative proof of the rigidity statement of the positive energy theorem: after the Lohkamp reduction, place a large cube $C$ in $X$ such that $g$ is flat in $X\setminus C$. Then identify opposite sides of $C$ and obtain a metric $\tilde g$ on $T^n \# K$, where $K$ is a closed manifold. Find a two-sided area minimizing hypersurface $\Sigma$ in $T^n\# K$ which is the Poincar\'e duality of a generator of $H^1(T^n)$. The Schoen-Yau dimension descent argument implies that $\Sigma$ is intrinsically flat and $\Ric(\nu,\nu)=0$ on $\Sigma$. Now an analogous metric deformation argument as in the proof of Theorem \ref{theo.dihedra.rigidity} implies that $T^n\# K$ contains a dense collection of intrinsically flat, infinitesimally rigid flat $T^{n-1}$, and thus $\tilde g$ itself is flat.
\end{rema}

\appendix

\section{Bending construction of hypersurfaces}\label{appendix.section.bending.construction}

In this section we review a bending construction due to Gromov. For more details, we refer the readers to of \cite[Section 11.3]{Gromov2018Metric}. Our objective is the following proposition, which reduces the Conjecture \ref{conj:dihedral.rigidity} to the statement of Theorem \ref{theo.dihedra.rigidity}, by fixing the dihedral angles between faces of a polyhedron.

\begin{prop}\label{prop.bending.construction}
Assume $(M^n,g)$ is a Riemannian polyhedron with a $C^\infty$ metric $g$, such that
\begin{enumerate}
    \item Each face of $M$ is weakly mean convex;
    \item There exists a smooth function $\alpha$ defined on the $(n-2)$-skeleton $E$ of $M$, which is constant along each edge $F_i\cap F_j$, such that for any point $q\in E$, the dihedral angle $q$ is less than or equal to $\alpha(q)$.
\end{enumerate}
Then there exists a Riemannian polyhedron $M'\subset M$, such that with the induced metric, $M'$ satisfies that
\begin{enumerate}
    \item Each face of $M'$ is weakly mean convex;
    \item The dihedral angle at every $q\in E$ is equal to $\alpha(q)$.
\end{enumerate}
\end{prop}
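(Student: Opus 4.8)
The plan is to realise Gromov's bending entirely inside a small tube around the $(n-2)$-skeleton $E$, leaving $M$ unchanged elsewhere, so that $M'$ agrees with $M$ off this tube and the problem reduces to a purely local construction near each edge. Fix an edge $E_{ij}=F_i\cap F_j$ with dihedral angle $\gamma_{ij}(q)\le\alpha(q)$. I would bend only one of the two faces, say $F_i$, pushing it \emph{into} $M$ near $E_{ij}$, so that the new edge is moved to the interior of $M$ (this is forced: if $M'\subset M$ shared a corner with $M$, the dihedral angle there could not increase) and the new dihedral angle along it equals $\alpha(q)$. Concretely, in the Fermi-type coordinates of Lemma \ref{lemma.local.foliation.2} I would arrange $F_i=\{x_i=0\}$, $F_j=\{x_j=0\}$ with $x_i>0$ pointing into $M$, and replace $F_i$ near $E_{ij}$ by the normal graph $\{x_i=\phi(x_j,x')\}$, where $\phi\ge 0$, $\phi\equiv 0$ for $x_j\ge L$ (so the new face matches $F_i$ away from $E$), and $\phi$ is \emph{convex in $x_j$} on $[0,L]$, equal to a small constant $c$ along $\{x_j=0\}$ and flattening out to first and second order at $x_j=L$ — the model being $\phi=c\,\chi(x')(1-x_j/L)_+^2$, suitably smoothed at $x_j=L$. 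The new dihedral angle along the new edge $\{x_i=c,\ x_j=0\}$ is $\gamma_{ij}(q)+\arctan(2c/L)+O((c/L)^2)$, opened by $\approx 2c/L$; one then iterates finitely many times, readjusting $c,L$, to reach $\alpha(q)$ exactly.

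The point on which everything turns — and the step I expect to be the main obstacle — is that the bent face $\widetilde F_i$ stays weakly mean convex. Computing the mean curvature with respect to the outward normal, $H_{\widetilde F_i}=H_{F_i}+\big(\Delta_{F_i}\phi+(|A_{F_i}|^2+\Ric_M(\nu,\nu))\phi\big)+O(\|\phi\|_{C^2}^2)$, the crucial sign being that pushing a face \emph{into} the domain adds a term whose leading part is $+\Delta_{F_i}\phi$ (up to the sign conventions of the linearisation appearing in \eqref{formula.second.variation}, fixed so that this model is correct). Where $H_{F_i}>0$ a sufficiently small bend is harmless; the only danger is on the minimal set $\{H_{F_i}=0\}$. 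There I would exploit that $\phi$ is convex in $x_j$ and supported in $\{x_j\le L\}$: then $\Delta_{F_i}\phi\gtrsim\partial^2_{x_j}\phi\gtrsim c/L^2$ on the support, which dominates both the bounded zeroth-order term $(|A_{F_i}|^2+\Ric_M(\nu,\nu))\phi\gtrsim -Kc$ once $L$ is small, and the quadratic error once $c$ is small relative to $L$. Hence $H_{\widetilde F_i}\ge H_{F_i}\ge 0$, so weak mean convexity is preserved; since the angle opened at each step is $\approx 2c/L$ and can be made arbitrarily small, finitely many iterations attain $\alpha(q)$ while staying inside the fixed tube.

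Finally I would globalise. Carrying the model bending out along every edge with pairwise essentially disjoint supports modifies each face near the open edges; what remains is to reconcile the deformations near the lower-dimensional strata of $E$, where three or more faces (each possibly already bent) meet. I would handle this by downward induction on the dimension of the stratum, repeating the same construction in Fermi-type coordinates adapted to that stratum — Lemma \ref{lemma.local.foliation.2} again producing coordinates in which all prescribed angles along the incident edges are realised simultaneously — using that every bend is $C^0$-small and supported in a neighbourhood one is free to shrink. The result is a Riemannian polyhedron $M'\subset M$ with weakly mean convex faces and dihedral angle exactly $\alpha(q)$ at each $q\in E$. The two hazards to watch are the mean-convexity control on the minimal set (handled by the convex thin-collar profile) and accumulation of errors over the finitely many increments and the strata-by-strata matching (handled by smallness of supports); neither should obstruct the argument, but the sign bookkeeping in the first is the genuinely delicate point.
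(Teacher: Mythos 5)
Your basic mechanism is the right one, and it matches the linearization the paper (following Gromov) uses: an inward push $\phi$ changes the mean curvature of the face by $\Delta_{F}\phi+\bigl(|A_{F}|^2+\Ric_M(\nu,\nu)\bigr)\phi$ up to quadratic errors, and the conormal slope of $\phi$ at the edge is what opens the dihedral angle. The gap is exactly at the step you flag as delicate. With the profile $\phi=c\,\chi(x')(1-x_j/L)_+^2$ the claimed bound $\Delta_{F_i}\phi\gtrsim \partial_{x_j}^2\phi\gtrsim c/L^2$ on the support is false: on the transition region of the cutoff the good term is only $2c\chi/L^2$, which degenerates as $\chi\to 0$, while the tangential Hessian contribution $c\,(\Delta_{x'}\chi)(1-x_j/L)^2$ is of size $c$ with no sign, so weak mean convexity is not controlled there by your estimate. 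Worse, wherever $\chi<1$ the angle is opened only by $\arctan(2c\chi/L)$, so the conclusion that the dihedral angle equals $\alpha(q)$ at \emph{every} point of the skeleton fails on the transition region; since equality is required along the whole edge, you cannot cut off in the directions along an edge at all. Removing the cutoff pushes the entire difficulty into the two issues you defer: near the ends of an edge the tube around $E_{ij}$ meets the other edges $F_i\cap F_k$ of the same face, and bending $F_i$ there changes those angles too unless the displacement has the correct (vanishing or prescribed) conormal derivative along them; and the ``downward induction on strata'' is named but not carried out — at an $(n-3)$-stratum one must re-bend faces that are already bent without destroying the exact angles just achieved along the incident open edges, a compatibility condition your proposal never formulates. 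Finally, reaching $\alpha(q)$ ``exactly'' after finitely many small increments needs an argument (each bend creates a face with $|A|\sim c/L^2$, so the constants entering the next step change); the paper closes the analogous step with the implicit function theorem.

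For comparison, the paper's proof avoids the patching problem by bending a whole face at a time rather than a tube around each edge: it flows a face $F_0$ along a vector field $X$ transverse to $F_0$ and tangential to \emph{all} other faces, displacing by a single function $f$ on $F_0$ with $\Delta f\ge 2\Lambda|f|$ and $\pa{f}{\eta}\le -2\Lambda|f|$ (an exponential-type profile, as in the proof of Theorem \ref{theo.dihedra.rigidity}), which by the linearized formulas preserves weak mean convexity and opens the angles along all of $\partial F_0$ simultaneously; the implicit function theorem hits the prescribed angles, tangency of $X$ to the other faces leaves the remaining angles untouched, and one then inducts over the faces. If you want to keep a localized scheme, you would need to prescribe, near each lower stratum, a single displacement whose conormal derivatives along all incident edges are controlled simultaneously (the coordinates of Lemma \ref{lemma.local.foliation.2} make this feasible), and replace the quadratic collar by an exponential profile so that $\Delta\phi\ge 2\Lambda\phi$ holds pointwise on its whole support.
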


Gromov's idea of proof of Proposition \ref{prop.bending.construction} is by induction, which we very briefly describe as follows. Take one face $F_0 \subset M$. We fix a vector field $X$ in $M$, such that $X$ is transversal to $\Sigma_0$ and points inward $M$, but tangential to all the other faces of $M$. Let $\phi=\phi(x,t)$ be the flow generated by the vector field $X$. Let $f$ be a smooth function defined on $\Sigma_0$. Define
\[F_f=\{\phi(p,f(p):x\in F_0)\}.\]
By the tubular neighborhood theorem, $\Sigma_f$ is isotopic to $\Sigma_0$ when $\|f\|_{C^0}$ is small enough. Hence by replacing the face $F_0$ by $F_f$ and keeping all the other faces the same, we may obtain a new polyhedron $M'\subset M$. It remains to check the conditions on mean curvature and on dihedral angle. Let $H_f$ and $\theta_f$ be the mean curvature (with respect to outward unit normal) and the dihedral angle of the surface $F_f$. The linearized operator of $H_f$ and $\theta_f$ is then given by 
\begin{equation}
        \begin{split}
            &\partial_t|_{t=0} H_{(tf)} = \Delta f + (\Ric_M(\nu,\nu)+|A|^2)f,\\
            &\partial_t|_{t=0} \cos\theta_{(tf)} = -\pa{f}{\eta} + \bangle{\nu,\nabla_{\nu} X} f.
        \end{split}
\end{equation}

Denote $\Lambda>0$ be an upper bound of $\Ric_M(\nu,\nu)+|A|^2$ and $\bangle{\nu,\nabla_{\nu}X}$. Then one may choose a function $f$ such that $|f|_{C^0}$ is  sufficiently small, but $\Delta f\ge 2\Lambda |f|$, $\pa{f}{\eta}\le -2\Lambda |f|$. (See, for instance, the function $f$ in the proof of Theorem \ref{theo.dihedra.rigidity}.) By the implicit function theorem, we then may strictly increase the mean curvature and the dihedral angle the same time. Notice that during this process, we do not change the dihedral angle between other faces.

Once we have fixed the dihedral angle between $F_0$ and other faces, we may repeat this process and inductively increase the dihedral angle between all pairs of faces. 

Note that when $M$ is over-prism of type $P$ with $\Phi: M\to P$ the degree one map, and $p\in M$ is on an edge such that the dihedral angle at $p$ is strictly less than that of $\Phi(p)$, the bending construction will produce $M'$ whose boundary mean curvature in a neighborhood of $p$ is strictly mean convex. In particular, Theorem \ref{theo.dihedra.rigidity} and the bending construction implies the strong form of dihedral rigidity, where we only assume dihedral angles everywhere less than or equal to that of $P$.

\subsection{Application to dihedral rigidity in dimension three}
As an application of Proposition \ref{prop.bending.construction}, we may extend the main results of \cite{Li2017polyhedron} to more general polyhedra of cone or prism types. In Theorem 1.4 and 1.5 of \cite{Li2017polyhedron}, for the purpose of regularity of the capillary functional minimizer, an extra condition on the dihedral angles was needed. Let us briefly recall these results. Given an Euclidean flat cone or prism $P$ and a Riemannian polyhedron $(M^3,g)$ diffeomorphic to $P$, let $F_j$, $j=1,\cdots,k$ be the side faces of $M$, $F_j'$ be the side faces of $P$. Let $B'$ be the base face of $P$. Denote the (constant) dihedral angle between $F_j'$ and $B'$ by $\gamma_j$. Then in Theorem 1.4 and 1.5 of \cite{Li2017polyhedron}, it was assumed that
\begin{align}
   & |\pi-(\gamma_j+\gamma_{j+1})|<\measuredangle(F_j,F_{j+1}), \label{assumption.angle.between.side.faces}\\
    &\gamma_j\le \frac{\pi}{2},\quad j=1,\cdots, k. \label{assumption.angle.non.obtuse}
\end{align}

\begin{theo}[Theorem 1.4 of \cite{Li2017polyhedron}]
Assume $P$ is an Euclidean simplex or prism, $(M^3,g)$ is a Riemannian polyhedron diffeomorphic to $P$, such that \eqref{assumption.angle.between.side.faces} and \eqref{assumption.angle.non.obtuse} holds. Then $(M,g)$ cannot simultaneously satisfy that $R(g)\ge 0$ in the interior of $M$, each face of $M$ is mean convex, and that the dihedral angles of $M$ is everywhere less than the corresponding dihedral angle of $P$.
\end{theo}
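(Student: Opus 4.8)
The plan is to replace the free boundary slicing of the main text by the \emph{capillary} slicing of \cite{Li2017polyhedron}. Fix the diffeomorphism $\Phi\colon M\to P$. Since $P$ is a cone or a prism it has a base face $B'$, side faces $F_1',\dots,F_k'$ (and, in the prism case, a top face); write $B=\Phi^{-1}(B')$, $F_j=\Phi^{-1}(F_j')$, and recall $\gamma_j$ is the Euclidean angle between $F_j'$ and $B'$, so $\gamma_j=\pi/2$ when $P$ is a prism. First I would minimize the capillary energy
\[
I=\inf\Big\{\cH^{2}(\partial^{*}\Omega\cap\mathring M)-\sum_{j=1}^{k}\cos\gamma_j\,\cH^{2}(\partial^{*}\Omega\cap F_j)\Big\}
\]
over Caccioppoli sets $\Omega\subset M$ in the analogue for $P$ of the class $\cC$ of \eqref{problem.variation.capillary} (competitors separating the vertex, resp.\ the top face, of $P$ from $B'$; in the prism case this is exactly \eqref{problem.variation}). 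Compactness for sets of finite perimeter produces a minimizer $\Omega$; the first variation formula shows $\Sigma:=\partial^{*}\Omega\cap\mathring M$ has zero mean curvature in $\mathring M$ and meets each side face $F_j$ at the constant contact angle $\gamma_j$; and a maximum principle in the spirit of Theorem~\ref{theo.strong.maximum.principle}, now in the capillary setting, uses the weak mean convexity of the horizontal face(s) to confine $\Sigma$ to $\mathring M\cup\bigcup_j F_j$, so that $\partial\Sigma\subset\bigcup_j F_j$.

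The main difficulty — and the reason for imposing \eqref{assumption.angle.between.side.faces} — is the regularity of $\Sigma$ up to the edges $F_j\cap F_{j+1}$ of $M$: a priori the capillary minimizer could develop a singularity, or run along a face, near such an edge, and the partial regularity known for capillary energy minimizers in general domains (cf.\ \cite{DePhilippisMaggi15regularity}) does not preclude this. The inequality $|\pi-(\gamma_j+\gamma_{j+1})|<\measuredangle(F_j,F_{j+1})$ is exactly the condition that the two prescribed contact angles are compatible with the dihedral wedge of $M$ along that edge, in the sense that a flat sector of interior angle strictly between $0$ and $\pi$ can meet $F_j$ and $F_{j+1}$ at the angles $\gamma_j$ and $\gamma_{j+1}$; granting this, one identifies the tangent object of $\Sigma$ at an edge point with such a half-plane and bootstraps $\Sigma$ to a piecewise-$C^{2,\alpha}$ polygonal surface. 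Since degree one is preserved under the relevant current convergence, exactly as in the proof of Theorem~\ref{theo.dihedra.rigidity}, $\Sigma$ carries a degree one map onto a slice $P_0$ of $P$; hence $\Sigma$ is a connected $2$-dimensional Riemannian polyhedron with $\chi(\Sigma)\le 1$, with at least as many corners as $P_0$ has vertices (one lying over each vertex of $P_0$, on an edge $F_j\cap F_{j+1}$ of $M$), and all corner angles in $(0,\pi)$.

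It remains to run, on $\Sigma$, the Gauss--Bonnet argument used for the base case $n=2$ of Theorem~\ref{theo.dihedra.rigidity}. Testing the capillary stability inequality of the minimizer with $f\equiv 1$ gives
\[
0\ \ge\ \int_{\Sigma}\big(\Ric_M(\nu,\nu)+|A_\Sigma|^{2}\big)\,dV+\int_{\partial\Sigma}q\,dS,
\]
where $q$ is the capillary boundary integrand; substituting the Gauss equation $\Ric_M(\nu,\nu)+|A_\Sigma|^{2}=\tfrac12 R_M+\tfrac12|A_\Sigma|^{2}-K_\Sigma$ and applying Gauss--Bonnet to the polygon $\Sigma$ (with corner angles $\beta_i$) yields
\[
2\pi\,\chi(\Sigma)\ \ge\ \int_{\Sigma}\big(\tfrac12 R_M+\tfrac12|A_\Sigma|^{2}\big)\,dV+\int_{\partial\Sigma}\big(\kappa_g+q\big)\,dS+\sum_i(\pi-\beta_i).
\]
Along $\partial\Sigma\cap F_j$ the weak mean convexity of $F_j$ gives $\kappa_g+q\ge 0$ — for $\gamma_j=\pi/2$ this is the identity $\kappa_g+q=\overline{H}_{\partial M}\ge 0$ used in Proposition~\ref{proposition.sigma.infinitesimally.rigid}, and the general case is its capillary counterpart from \cite{Li2017polyhedron} — so together with $R_M\ge 0$ and $|A_\Sigma|^{2}\ge 0$ this leaves $\sum_i(\pi-\beta_i)\le 2\pi\,\chi(\Sigma)\le 2\pi$. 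On the other hand, spherical trigonometry in the link of an edge point shows that a corner of $\Sigma$ over $F_j\cap F_{j+1}$ has interior angle
\[
\cos\beta_i=\frac{\cos\measuredangle(F_j,F_{j+1})+\cos\gamma_j\cos\gamma_{j+1}}{\sin\gamma_j\,\sin\gamma_{j+1}},
\]
and the same formula with $\measuredangle(F_j,F_{j+1})$ replaced by the larger Euclidean dihedral angle of $P$ computes the corresponding angle $\beta_i^{0}$ of $P_0$; since cosine is decreasing, the strict inequality forces $\beta_i<\beta_i^{0}$, whence $\sum_i(\pi-\beta_i)\ge\sum_i(\pi-\beta_i^{0})=2\pi$ with strict inequality at every corner, the middle equality being the sum of exterior angles of the polygon $P_0$. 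This contradicts the previous line, so no such $(M,g)$ exists. I expect the boundary and corner regularity of $\Sigma$ to be the real obstacle, which is precisely why \eqref{assumption.angle.between.side.faces} cannot be dropped at this stage; it is removed afterwards by combining this theorem with the bending construction of Proposition~\ref{prop.bending.construction}.
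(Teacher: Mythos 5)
Your proposal follows essentially the same route as the source of this statement: the paper only quotes Theorem 1.4 from \cite{Li2017polyhedron}, and the argument there (also sketched around \eqref{problem.variation.capillary} in Section 1.2 here) is exactly your capillary minimization separating the vertex/top face from the base, the confinement of the minimizer by a maximum principle, and then the stability inequality with $f\equiv 1$ combined with the Gauss equation, Gauss--Bonnet, and the spherical-trigonometry comparison of corner angles to reach $\sum_i(\pi-\beta_i)>2\pi$ against $\sum_i(\pi-\beta_i)\le 2\pi\chi(\Sigma)\le 2\pi$. The one step you assert rather than prove --- the $C^{2,\alpha}$ regularity of the capillary minimizer up to the edges $F_j\cap F_{j+1}$ --- is precisely the technical core of \cite{Li2017polyhedron} and the reason the hypothesis \eqref{assumption.angle.between.side.faces} is imposed, so your outline is faithful to the actual proof rather than an alternative to it.
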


Notice that Proposition \ref{prop.bending.construction} enables us to increase the dihedral of a Riemannian polyhedron, and at the same time do not decrease face mean curvature and the interior scalar curvature. Therefore the lower bound in condition \eqref{assumption.angle.between.side.faces} may, without loss of generality, be dropped.

\begin{theo}
Let $P$ be a simplex or a prism in $\RR^3$, $g_0$ is the Euclidean metric on $P$ satisfying \eqref{assumption.angle.non.obtuse}. Suppose $g$ a Riemannian metric on $P$, such that $R(g)\ge 0$ in the interior of $M$, each face of $M$ is weakly mean convex, and the dihedral angles of $(P,g)$ is everywhere not larger than the corresponding dihedral angle of $(P,g_0)$. Then up to a scaling, $(P,g)$ is isometric to $(P,g_0)$.
\end{theo}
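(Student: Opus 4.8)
The plan is to derive this from the three-dimensional results of \cite{Li2017polyhedron} after using the bending construction of Appendix~\ref{appendix.section.bending.construction} to normalize the dihedral angles.

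\textbf{Step 1: bending.} Assuming $g$ is smooth, apply Proposition~\ref{prop.bending.construction} with $\alpha$ equal to the (edgewise constant) function on the $1$-skeleton of $P$ recording the dihedral angles of $(P,g_0)$. This produces a sub-polyhedron $M'\subset P$, diffeomorphic to $P$ and carrying the restricted metric, whose faces are weakly mean convex and whose dihedral angles everywhere equal those of the Euclidean simplex. Because bending merely repositions faces inward, the interior metric — in particular the bound $R(g)\ge 0$ — and the degree-one map onto $P$ are unaffected. I would in fact perform the bending in a one-parameter family $\{M'_s\}_{s\in(0,s_0]}$ with face-displacement functions of the form $\epsilon_s\phi$, $\epsilon_s\to 0$, so that $M'_s\to P$ in $C^\infty$ as $s\to 0$; this is needed at the rigidity stage.

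\textbf{Step 2: applying \cite{Li2017polyhedron}.} View $P$ as a cone over a base face $B$, taking — as recalled in the discussion above — $B$ to be the face opposite the vertex nearest to it, so that the dihedral angles $\gamma_j$ between the remaining faces $F_j$ and $B$ all lie in $(0,\pi/2]$. Since the dihedral angles of $(M'_s,g)$ coincide with those of $(P,g_0)$, and $\gamma_j+\gamma_{j+1}\le\pi$, the condition \eqref{assumption.angle.between.side.faces} reduces to the Euclidean inequality $\measuredangle_{g_0}(F_j,F_{j+1})>\pi-\gamma_j-\gamma_{j+1}$; this is exactly the spherical angle-sum inequality applied to the link of $P$ at the common vertex $w$ of $F_j$, $F_{j+1}$ and $B$, whose three angles are $\measuredangle_{g_0}(F_j,F_{j+1})$, $\gamma_j$ and $\gamma_{j+1}$. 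Hence Theorem~1.5 of \cite{Li2017polyhedron} applies to each $(M'_s,g)$ and yields that it is, up to scaling, isometric to a flat Euclidean simplex; in particular $g$ is flat on $\mathring{M'_s}$ and each bent face of $M'_s$ is totally geodesic in $(M'_s,g)$.

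\textbf{Step 3: descent and rigidity; the main obstacle.} Let $s\to 0$. Since the $\mathring{M'_s}$ exhaust $\mathring P$, flatness on each gives $\mathrm{Rm}_g\equiv 0$ on $\mathring P$; and since the displacement functions tend to $0$ in $C^\infty$, the bent faces converge in $C^2$ to the faces of $P$, so the vanishing of their second fundamental forms passes to the limit and every face of $(P,g)$ is totally geodesic. A simply connected flat polyhedron with totally geodesic faces meeting along its edges at the dihedral angles of a Euclidean simplex and admitting a degree-one map to $P$ develops isometrically onto a Euclidean simplex; therefore $(P,g)$ is isometric to $(P,c\,g_0)$ for some $c>0$. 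The proof is mostly an assembly of cited facts, so the substance lies in the two verifications above: that the angle hypotheses of \cite[Thm.~1.5]{Li2017polyhedron} — principally the existence of a base face making all $\gamma_j\le\pi/2$, which relies on the elementary but genuinely necessary Euclidean input, and which is delicate for nearly degenerate simplices — survive the normalization, and that the bending family is controlled well enough in $C^2$ for the infinitesimal flatness of each slice to pass to the limit. I expect the first of these to be the main obstacle.
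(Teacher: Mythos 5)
Your Steps 1 and 2 are exactly the paper's argument: bend via Proposition \ref{prop.bending.construction} so that the dihedral angles become identically those of $(P,g_0)$, take the base face opposite the vertex of least height so that all $\gamma_j\in(0,\pi/2]$, and invoke Theorem 1.5 of \cite{Li2017polyhedron}. Your explicit check that \eqref{assumption.angle.between.side.faces} then reduces to the spherical angle-sum inequality at the base vertex common to $F_j$, $F_{j+1}$ and $B$ is correct and makes precise what the paper only asserts ("the lower bound may be dropped"); contrary to your closing remark, this is not where the difficulty lies.

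The genuine gap is in Step 3. If the dihedral angle of $(P,g)$ is strictly smaller than the Euclidean one along some edge (the only case where anything needs proving), then no family $M'_s$ of bent polyhedra whose angles are \emph{exactly} Euclidean can converge to $P$ in $C^\infty$, nor can their faces converge in $C^2$ to the faces of $P$: along such an edge the bent face must be rotated by the fixed angle deficit relative to the original face, so while the displacement can be confined to shrinking collars of the edges (hence $\mathrm{Rm}_g\equiv 0$ on $\mathring P$ does follow by exhaustion), the second fundamental forms of the bent faces blow up in those collars, and the limit faces would have a crease. Thus neither "every face of $(P,g)$ is totally geodesic'' near the edges nor the assertion, used silently in your development argument, that the faces of $(P,g)$ meet at the Euclidean angles, is obtained by your limiting argument; the latter is essentially the conclusion you are trying to prove. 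A way to close the loop, consistent with Appendix \ref{appendix.section.bending.construction}, is to use a \emph{single} bending and observe that the construction makes the displaced face strictly mean convex wherever the displacement is nonzero (since $\partial_t H_{(tf)}=\Delta f+(\Ric(\nu,\nu)+|A|^2)f\ge \Lambda|f|$ with the chosen $f\le 0$). The full rigidity conclusion of \cite[Thm.~1.5]{Li2017polyhedron} applied to the bent polyhedron forces all its faces to be totally geodesic, so the bending must have been trivial; hence the dihedral angles of $(P,g)$ already equal those of $(P,g_0)$ everywhere, and Theorem 1.5 of \cite{Li2017polyhedron} applies directly to $(P,g)$, its hypothesis \eqref{assumption.angle.between.side.faces} holding by your spherical-link inequality.
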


\section{Elliptic regularity in Riemannian polyhedron}
We prove some regularity results for second order elliptic equations in a Riemannian polyhedron. For the purpose of this paper, we only consider the case of a Riemannian prism, as in Definition \ref{definition.riemann.prism}. Let $(M^n,g)$ be a Riemannian prism with (closed) faces $\{F_i\}$. We assume that each face $F_i$ is a $C^{2,\alpha}$ hypersurface, and each pair of adjacent faces $F_i,F_j$ intersects transversely at constant angle. The Riemannian metric $g$ is also assumed to be $C^{2,\alpha}$ in $\overline{M}$. Consider an elliptic equation
\begin{equation}\label{equation.for.regularity.appendix}
    \begin{cases}
        Lu=0\quad &\text{in }M,\\
        \pa{u}{\nu}=qu\quad &\text{on }\partial M.
    \end{cases}
\end{equation}
Here, we assume that $L$ is a linear elliptic operator, such that the second order part is Laplacian. (Later we will apply this appendix to study regularity of minimal surfaces in $M$, and regularity of solutions to \eqref{equation.first.eigenvalue.stability.PDE}.) We also assume that $q$ is a function, $C^{1,\alpha}$ when restricted on each face of $M$, and satisfies the compatibility condition 
\begin{equation}\label{equation.appendix.compatibility.condition}
    \paop{\nu_j}(q|_{F_i})=\paop{\nu_i}(q|_{F_j})
\end{equation}
along the intersection $F_i\cap F_j$, for each pair of \textit{orthogonal} adjacent faces $F_i,F_j$.

Since $(M,g)$ is a Lipschitz domain, by classical theory in elliptic PDE, we know that any weak solution to \eqref{equation.for.regularity.appendix} is in $C^{0,\alpha}(\overline{M})$, and is locally $C^{2,\alpha}$ in the interior of $M$ and on the smooth part of $\partial M$ (namely, on the $(n-1)$ dimensional strata of $\partial M$). Hence it suffices to consider \eqref{equation.for.regularity.appendix} on the corners of $M$ (namely, on $d$ dimensional strata of $\partial M$ with $0\le d\le n-2$). Since the regularity theory is local in nature, we can fix a point $p$ at the corner. A fundamental observation to utilize the dihedral angle condition is to write the equation in the coordinate system constructed in Lemma \ref{lemma.local.foliation.2}. 

Precisely, suppose a neighborhood of $p$ in $M$ is diffeomorphic to the domain $\Omega=\{(x_1,\cdots,x_n):x_j\ge 0, j=1,\cdots,k\}$,
for some integer $k\in [2,n]$. (In the case where $k=1$, $p$ is on the smooth part of the boundary, and the solution is automatically $C^{2,\alpha}$.) Then $p$ lies on the intersection of $k$ faces of $M$, say $F_1,\cdots,F_k$. Note that $F_i$ meets $F_j$ along an interior constant dihedral angle $\gamma_{ij}$. Moreover, by definition \ref{definition.euclid.prisms}, for all but at most one pair of $(i,j)$, $i\le j$, $\gamma_{ij}=\pi/2$. Denote $\theta_0=\gamma_{12}$. Then $\theta_0\in (0,\pi/2]$. Take the coordinate system $(x_1,\cdots,x_n)$ constructed in Lemma \ref{lemma.local.foliation.2}. Then $\partial_i$ is the outward unit normal vector for $F_i$. We further make a change of coordinates
\[x_2'=(\cot\theta_0) x_1+\frac{1}{\sin\theta_0} x_2.\]
By slight abuse of notation, we will write $x_2$ for $x_2'$. As a consequence, $g(\partial_i,\partial_j)=0$ along each $F_i$, whenever $i\ne j$ and $1\le i,j\le k$. To illustrate the advantage of this coordinate system, we note that an Euclidean prism will be given by $W_2\times [0,\infty)^{k-2}\times \RR^{n-k}$, where $W_2$ is a wedge region on the $x_1,x_2$ plane. The equation \eqref{equation.for.regularity.appendix} can be written in the following form:

\begin{equation}\label{equation.appendix.euclidean.prism}
    \begin{cases}
        a_{ij}u_{ij}+b_i u_i+cu = f \quad &\text{in }\Omega\cap B_1,\\
        \pa{u}{\nu}=0 \quad &\text{on }\partial \Omega \cap B_1,\\
        u=g \quad &\text{on }\Omega\cap \partial B_1.
    \end{cases}
\end{equation}
where the coefficients satisfies
\begin{equation}\label{equation.appendix.euclidean.assumptions}
    \begin{aligned}
        &\Lambda^{-1}|\xi|^2 \le a_{ij}\xi^i\xi^j\le \Lambda|\xi|^2\\
        &a_{ij},b_i,c \in C^{0,\alpha}(\overline{\Omega\cap B_1}) \quad \text{with}\quad |a_{ij}|_{0,\alpha}, |b_i|_{0,\alpha},|c|_{0,\alpha}\le \Lambda.\\
        & a_{ij}=0 \text{ on } \{x_i=0\}, \text{whenever }j\ne i, 1\le i,j\le k.
    \end{aligned}
\end{equation}

We note that the equations in our geometric argument satisfies this property:

\begin{rema}
    In local coordinates, the Laplacian operator takes the form 
    \[\Delta_g u= \frac{1}{\sqrt{\det g}}\partial_i\left(\sqrt{\det g}g^{ij} \partial_j u\right)=g^{ij} u_{ij}+\text{lower order terms}.\]
    Since $g_{ij=0}$ on $F_i$ for $1\le i,j\le k$, $i \ne j$, \eqref{equation.appendix.euclidean.assumptions} is satisfied.
\end{rema}

\begin{rema}
    We will see that the minimal surface equation also satisfies the assumptions \eqref{equation.appendix.euclidean.assumptions}. By Theorem \ref{theo.regularity.of.free.boundary.minimizing.surface}, a free boundary area minimizing hypersurface in $M$ is locally a $C^{1,\alpha}$ graph over its tangent plane. The minimal surface equation takes the form
    \[F(g_{ij},u,u_i)u_{ij}+\text{lower order terms}=0\]
    with Neumann boundary conditions. Here $F$ is a smooth function depending analytically on the metric $g_{ij}$, the graphical function $u$ and its first derivatives. In particular, if $u_i=0$ along $x_i=0$, and $j\ne i$, $F_{ij}=0$.
\end{rema}

\begin{prop}\label{proposition.appendix.regularity}
    Suppose $u\in C^{1,\alpha}(\overline{\Omega\cap B_1})\cap W^{2,2}(\Omega\cap B_1)$ is a weak solution to \eqref{equation.appendix.euclidean.prism}, where the coefficient satisfies \eqref{equation.appendix.euclidean.assumptions}, and $f\in C^{0,\alpha}(\overline{\Omega\cap B_1})$, $g\in C^0(\overline{\Omega})$. Then $u\in C^{2,\alpha}(\overline{\Omega\cap B_{1/2}})$.
\end{prop}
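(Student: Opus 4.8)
Since the assertion is local and, away from the edges, reduces to classical interior and oblique-derivative Schauder theory (the top-order operator is elliptic and $a_{ij},b_i,c,f$ are $C^{0,\alpha}$), it suffices to prove the $C^{2,\alpha}$ estimate in a neighbourhood of an edge point $p\in\{x_1=\cdots=x_k=0\}$. In the coordinates of Lemma~\ref{lemma.local.foliation.2}, normalised as set up in this appendix so that $g(\partial_i,\partial_j)=0$ on the faces and the model domain is $\Omega=W_2\times[0,\infty)^{k-2}\times\RR^{n-k}$ with $W_2\subset\RR^2_{x_1,x_2}$ a planar wedge of interior opening angle $\theta_0\in(0,\pi/2]$, I would attack the problem in two moves: first strip off the right-angle faces by reflection, reducing to a single planar wedge of angle $\theta_0\le\pi/2$ crossed with a Euclidean factor; then, for $\theta_0=\pi/2$ reflect once more to an interior problem, and for $\theta_0<\pi/2$ run a Schauder-type iteration against the model Neumann problem on the wedge, whose solutions decay one order better than generic precisely because $\pi/\theta_0>2$.

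\textbf{Step 1: removing the right-angle faces.} We may assume $(a_{ij})$ symmetric. For $i=3,\dots,k$ in turn I reflect $u$ evenly across $\{x_i=0\}$. The homogeneous Neumann condition $\partial u/\partial x_i=0$ on $\{x_i=0\}$ makes the even extension $\tilde u$ lie in $C^{1,\alpha}\cap W^{2,2}_{\mathrm{loc}}$; for the reflected equation to retain the form \eqref{equation.appendix.euclidean.prism} one extends $a_{ii},c,a_{jk},b_j$ ($j,k\ne i$) evenly and $a_{ij}$ ($j\ne i$) oddly, and the hypothesis $a_{ij}|_{\{x_i=0\}}=0$ from \eqref{equation.appendix.euclidean.assumptions} is exactly what makes this odd extension $C^{0,\alpha}$; ellipticity is preserved since reflection is a Euclidean isometry, and the structural condition \eqref{equation.appendix.euclidean.assumptions} persists relative to the remaining faces. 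The single anomalous term is $b_i\,\partial_i u$: since we only need a regularity statement, I would move it to the right-hand side, where its even extension is an a priori $C^{0,\alpha}$ function (as $u\in C^{1,\alpha}$). After $k-2$ such reflections we reach the case $k=2$: a wedge $W_2\times\RR^{n-2}$ of angle $\theta_0\le\pi/2$, homogeneous Neumann data on $\{x_1=0\}$ and $\{x_2=0\}$, $a_{12}=0$ on both faces, and $u\in C^{1,\alpha}$. If $\theta_0=\pi/2$, reflect once more across $\{x_1=0\}$ and then across $\{x_2=0\}$: the domain becomes all of $\RR^n$, the equation has $C^{0,\alpha}$ coefficients and right-hand side, and interior $W^{2,p}$ theory followed by interior Schauder yields $u\in C^{2,\alpha}$ for every $\alpha\in(0,1)$.

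\textbf{Step 2: the wedge $\theta_0<\pi/2$.} Freeze the coefficients at $p$. Since $a_{12}(p)=0$, a diagonal rescaling of $(x_1,x_2)$ turns the frozen top-order operator into the Euclidean Laplacian, so the model is $\Delta w=0$ on $W_2\times\RR^{n-2}$ with homogeneous Neumann data on the two faces. Separating variables, the edge-translation-invariant homogeneous solutions are spanned by $r^{m\pi/\theta_0}\cos(m\pi\vartheta/\theta_0)$, $m\ge0$, where $(r,\vartheta)$ are polar coordinates across the wedge; because $\theta_0<\pi/2$ the smallest positive homogeneity is $\pi/\theta_0>2$. Hence, for $\alpha\le\alpha_0(\theta_0):=\min\{\pi/\theta_0-2,\tfrac12\}$, the only model solutions of growth $o(|x|^{2+\alpha})$ are the polynomial ones — constants, the linear functions $x_{k+1},\dots,x_n$, and quadratic harmonic polynomials in the edge variables alone — all of which are smooth; equivalently, solutions of the frozen Neumann problem on a ball decay one order better than generic. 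With this decay in hand, the standard Campanato-space iteration goes through: freezing coefficients on small balls around points near $p$, comparing $u$ with the solution of the corresponding frozen Neumann problem with the same boundary data, and estimating the error via the $C^{0,\alpha}$-moduli of $a_{ij},b_i,c,f$, one obtains $\inf_{\ell\ \mathrm{affine}}\,r^{-n}\int_{B_r(x)\cap\Omega}|Du-\ell|^2\le Cr^{2(1+\alpha)}$ for $x$ near $p$; tangential difference quotients along the $\RR^{n-2}$-directions supply regularity in the edge variables. By the Campanato characterisation $Du\in C^{1,\alpha}$, i.e. $u\in C^{2,\alpha}$, up to the edge, and patching with the interior and smooth-face estimates completes the proof, with $\alpha$ chosen in $(0,\alpha_0(\theta_0))$.

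\textbf{The main obstacle.} Everything in Step 1 and the $\theta_0=\pi/2$ case is routine bookkeeping; the real work is Step 2 — making the Campanato/blow-up iteration rigorous on the non-smooth product wedge $W_2\times\RR^{n-2}$, i.e. establishing the Schauder-type a priori estimate and unique solvability for the \emph{model} Neumann problem on the wedge (the decay coming from the spectral gap $\pi/\theta_0>2$ of the Neumann eigenvalues on $[0,\theta_0]$) and controlling all perturbation terms, including the anomalous $b_i\,\partial_i u$ contributions generated in Step 1. This is also precisely where the standing hypothesis $\theta_0\le\pi/2$ is indispensable: if $\theta_0>\pi/2$ then $\pi/\theta_0<2$ and a solution may carry a genuine $r^{\pi/\theta_0}$ singularity at the edge, so only $C^{1,\pi/\theta_0-1}$ regularity survives and the proposition is false as stated.
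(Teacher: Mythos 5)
Your Step 1 is exactly the paper's first move: even reflection of $u$ across the orthogonal faces $\{x_3=0\},\dots,\{x_k=0\}$, with the coefficients $a_{ij}$ ($j\ne i$) extended with a sign so that the vanishing condition in \eqref{equation.appendix.euclidean.assumptions} makes the extensions $C^{0,\alpha}$; the paper handles the first-order terms by a signed extension of $b_i$ rather than by moving $b_i u_i$ to the right-hand side as you do, but the effect is the same, and both reductions land on the wedge $W=W_2\times\RR^{n-2}$ with homogeneous Neumann data on the two remaining faces. Where you genuinely diverge is the residual two-dimensional corner: the paper does not run a Campanato/blow-up scheme at all; it bisects $W_0$ into two sectors $W_1,W_2$ of opening $\theta_0/2\le\pi/4$, views $\bar u$ as solving a mixed (Dirichlet--Neumann) problem on each sector, and quotes Theorem 1 of \cite{Azzam1982mixedboundaryconditions} to get $C^{2,\alpha}$ up to the corner. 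Your route instead proves the wedge estimate from scratch via the spectral gap $\pi/\theta_0>2$ of the Neumann problem on the sector (with $\theta_0=\pi/2$ handled by a further double reflection). This is a legitimate alternative in the Kondrat'ev/Liouville-plus-iteration tradition; it is more self-contained and has the virtue of making explicit the restriction $\alpha<\alpha_0(\theta_0)$, which the paper leaves implicit (elsewhere it says only that $\alpha$ is ``chosen properly''). What the paper's route buys is brevity: the entire content of your Step 2 is exactly what is outsourced to Azzam's theorem.

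Two loose ends in your Step 2 deserve attention, since they are the parts your sketch asserts rather than proves. First, the hypothesis \eqref{equation.appendix.euclidean.assumptions} only kills $a_{ij}$ for $1\le i,j\le k$ on the corresponding faces; at an edge point $p$ with $x_j>0$ for $j\ge 3$ (or for indices $j>k$ anywhere) the frozen matrix can have nonzero couplings $a_{1j}(p),a_{2j}(p)$ between the wedge directions and the edge directions, so a diagonal rescaling of $(x_1,x_2)$ alone does not produce the Laplacian; you need an additional shear in the edge variables (or must treat the resulting tangentially-oblique boundary condition perturbatively), and this has to be woven into the comparison argument. Second, the classification of sub-quadratic-plus-$\alpha$ model solutions should include $\vartheta$-independent harmonic polynomials such as $x_1^2+x_2^2-2x_j^2$, not only polynomials in the edge variables; this does not affect the conclusion (all admissible homogeneities below $\pi/\theta_0$ are polynomial and smooth), but the Liouville statement and the solvability/decay estimates for the model Neumann problem on wedge balls are precisely the technical core you have flagged but not carried out.
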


\begin{proof}
    By the standard regularity theory, one has $C^{2,\alpha}$ regularity for $u$ in the interior of $\Omega\cap B_1$ and along the smooth part of $\partial\Omega\cap B_1$. We first rewrite the equation as
    \[a_{ij}u_{ij}=f-b_i u_i-cu,\]
    with the same boundary conditions.
    
    Denote $W=W_2\times \RR^{n-2}$. Consider the even extension $\bar{u}$ of $u$, defined in $W\cap B_1$, by letting 
    \[\bar{u}(x_1,\cdots,x_n)=u(x_1,x_2,|x_3|,\cdots,|x_k|, x_{k+1},\cdots, x_n).\] 
    Since $u\in C^{1,\alpha}(\overline{\Omega\cap B_1})$ and $\pa{u}{\nu}=0$ on $\partial W\cap B_1$, $\bar{u}\in C^{1,\alpha}(\overline{W}\cap B_1)$. We prove that $\bar{u}$ is the weak solution to an elliptic equation with $C^{0,\alpha}$ coefficients in $W\cap B_1$. Fix a point $x=(x_1,\cdots,x_n)$. For each pair of $(i,j)$, denote an integer $\tau_{ij}\in \{0,1,2\}$ as follows: if $i=j$, then $\tau_{ij}=0$; otherwise, $\tau_{ij}$ is the number of the elements $k\in \{i,j\}$ where $x_k<0$. Define 
    \[\bar{a}_{ij}(x_1,\cdots,x_n)=(-1)^{\tau_{ij}(x_1,\cdots,x_n)} a_{ij}(x_1,x_2,|x_3|,\cdots,|x_k|,x_{k+1},\cdots,x_n).\]
    Notice that $\bar{a}_{ij}$ is a $C^{0,\alpha}$ function in $B_1$. In fact, across each face $x_k=0$, either $\bar{a}_{ij}$ is an even extension of $a_{ij}$, or $\bar{a}_{ij}$ is an odd extension with $a_{ij}=0$ along $x_k$. 
    
    Similarly, for each $i$, define $\tau_i\in \{0,1\}$ as follows: if $x_i\ge 0$ then $\tau_i=0$; otherwise let $\tau_i=1$. Then define
    \[\bar{b}_i(x_1,\cdots,x_n)=(-1)^{\tau_i(x_1,\cdots,x_n)}b(x_1,x_2,|x_3|,\cdots,|x_k|,x_{k+1},\cdots,x_n).\]
    We observe that the function $\bar{b}_i\bar{u}_i$ is a $C^{0,\alpha}$ function in $B_1$. Indeed, along each $x_k=0$, either $i\ne k$ and $\bar{b}_i \bar{u}_i$ is an even extension, or $i=k$ and $u_i=0$ along $x_i=0$.
    
    Then we define $\bar{c},\bar{f},\bar{g}$ as the even extension of $c,f,g$. Thus $\bar{u}$ is a weak solution to 
    \[\bar{a}_{ij} \bar{u}_{ij}=\bar{f}-\bar{b_i}\bar{u}_i-\bar{c}\bar{u} \text{ in }W\cap B_1, \quad \bar{u}=\bar{g}\text{ on }W\cap \partial B_1, \quad \pa{u}{\nu}=0 \text{ on }\partial W\cap B_1.\]
    
    If the opening angle of $W_2$ is equal to $\pi/2$, then by evenly extending $\bar u$ across $x_1, x_2$ as before, we conclude that $\bar u$ is the weak solution to an elliptic equation with H\"older continuous coefficients in $B_1$, and thus is $C^{2,\alpha}$ in $B_{1/2}(0)$. Thus $u\in C^{2,\alpha}(\overline \Omega\cap B_{1/2}(0))$.
    
    Now suppose that the opening angle of $W_2$ is less than $\pi/2$. Without loss of generality we assume that $\bar u(0)=0$ and $D\bar u(0)=0$, otherwise consider $\bar u - \bar u(0) - D\bar u(0)\cdot x$ instead. For each $k\ge 3$, the function $D_{x_k} \bar u $ is the weak solution to an elliptic equation in the form of \eqref{equation.appendix.euclidean.assumptions}. Thus, by \cite[Theorem 4.2]{Liberman1988holder}, $D_k \bar u\in C^{1,\alpha}(\overline W)$.  Therefore, for any $\phi\in C_0^1(B_{1/4}(0))$, 
    \[\int_{B_{1/4(0)}\cap \overline W} \sum_{i,j=1}^2 \bar a_{ij} D_{x_i} \bar u D_{x_j} \phi = -\int_{B_{1/4}(0)\cap \overline W} \bar F\phi,\]
    where $F$ satisfies that $F\in C^{0,\alpha}(B_{1/4}(0)\cap \overline W)$ and  $F(0)=0$. We not take $\phi = \eta(x_1,x_2) \zeta(x_3,\cdots,x_n)$, where $\eta$ and $\zeta$ are both compactly supported. Thus, the above gives
    \[\int_{B_{1/4}(0)\cap \overline W} \sum_{i,j=1}^2 \bar a_{ij} (D_{x_i} \bar u) (D_{x_j} \eta ) \zeta = \int_{B_{1/4}(0)\cap \overline W} F\eta\zeta.\]
    Since $\bar a_{ij}, D_{x_i} \bar u , F$ are $C^{0,\alpha}$, we can let $\zeta$ approximate the Dirac delta. Thus, letting $v(x_1,x_2) = \bar u(x_1,x_2,0,\cdots,0)$, $a'_{ij}(x_1,x_2) = \bar a_{ij}(x_1,x_2,0,\cdots, 0)$, $F'(x_1,x_2) = F(x_1,x_2,0,\cdots,0)$, we have that $v$ satisfies that
    \[ \int_{B_{1/4}(0)\cap W_2} a'_{ij} D_i v D_j \eta = \int_{B_{1/4(0)}\cap W_2} F'\eta,\quad \forall \eta\in C_0^1(B_{1/4}). \]
    Moreover, $v(0)=Dv(0)=0$. Since the opening angle of $W_2$ is strictly less than $\pi/2$, there is no quadratic terms in the Fourier expansion of a Neumann harmonic function on $W_2$. Using $a'_{ij}(0,0)=\delta_{ij}$ and $a'_{ij}\in C^{0,\alpha}(B_{1/4}(0))$, standard Campanato estimates (see, e.g. \cite[Section 4]{Liberman1988holder}) implies that $v\in C^{2,\alpha}(B_{1/4}(0)\cap W_2)$, with the decay
    \[|v|_{0,B_{r}(0)\cap W_2}\le c(W_2) r^{2+\alpha} |v|_{0,B_{1/4}(0)\cap W_2},\]
    for some $\alpha>0$ that depends only on $W_2$. This shows that $\bar u\in C^{2,\alpha}(B_{1/4}(0)\cap W)$, and thus $u\in C^{2,\alpha}(B_{1/4}(0)\cap \overline \Omega)$.
\end{proof}

We are now ready to conclude the regularity needed for the paper.

\begin{coro}
    Assume $3\le n\le 7$. Let $(M^n,g)$ be an over cubic Riemannian manifold with $g$ a $C^{2,\alpha}$ metric, and the dihedral angle is everywhere $\pi/2$. Let $\Sigma^{n-1}$ be a properly embedded volume minimizing hypersurface of $M$ with free boundary. Then $\Sigma$ is a $C^{2,\alpha}$ graph over its tangent plane everywhere.
\end{coro}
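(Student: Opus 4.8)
The plan is to read this off from Theorem~\ref{theo.regularity.of.free.boundary.minimizing.surface} and Proposition~\ref{proposition.appendix.regularity}. Since $3\le n\le 7$, Theorem~\ref{theo.regularity.of.free.boundary.minimizing.surface} (in its Riemannian form, valid by the locality of the argument of \cite{EdelenLi2019regularity} under the $\pi/2$ dihedral angle hypothesis) applies to the free boundary area minimizing hypersurface $\Sigma$ and shows that near every point $p\in\Sigma$, including points on the corners of $\Sigma$, the surface is a $C^{1,\alpha}$ graph over its tangent plane $T_p\Sigma$. In the interior of $\Sigma$ and along the smooth $(n-1)$-dimensional strata of $\partial M$, the minimal surface equation with a $C^{2,\alpha}$ metric and a Neumann free boundary condition bootstraps this immediately to $C^{2,\alpha}$ by classical elliptic theory, so the only thing left to treat is a corner point $p$, lying on faces $F_1,\dots,F_k$ of $M$.

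At such a point I would use the coordinates $(x_1,\dots,x_n)$ of Lemma~\ref{lemma.local.foliation.2}: because every dihedral angle equals $\pi/2$, no shearing change of variables is needed, $F_i=\{x_i=0\}$, and $g(\partial_i,\partial_j)=0$ on $F_i$ for $i\ne j$, $1\le i,j\le k$; the tangent cone of $M$ at $p$ is $[0,\infty)^k\times\RR^{n-k}$ and $T_p\Sigma$ is a cubical corner $[0,\infty)^{k'}\times\RR^{n-1-k'}$ (for some $k'\le k$, after a rotation fixing the walls). Writing $\Sigma$ near $p$ as a graph $x_1=u(x_2,\dots,x_n)$ over this corner, the minimal surface equation becomes a quasilinear equation
\[
F(g_{ij},u,Du)\,u_{ij}+\text{(first order terms)}=0 \quad\text{in }\Omega\cap B_1,\qquad \pa{u}{\nu}=0\quad\text{on }\partial\Omega\cap B_1,
\]
with $\Omega$ a cubical domain and $F$ smooth in its arguments. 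Freezing $u$ and setting $a_{ij}(x):=F(g_{ij}(x),u(x),Du(x))$, which is $C^{0,\alpha}$ since $g$ is $C^{2,\alpha}$ and $u\in C^{1,\alpha}$, one checks exactly as in the Remark preceding Proposition~\ref{proposition.appendix.regularity} that on $\{x_i=0\}$ both $g_{ij}=0$ ($j\ne i$) and $u_i=0$ (the Neumann condition), hence $a_{ij}=0$ there for $i\ne j$, $1\le i,j\le k$. Thus the structural hypotheses \eqref{equation.appendix.euclidean.assumptions} are satisfied, and Proposition~\ref{proposition.appendix.regularity}, applied to the rewritten linear equation \eqref{equation.appendix.euclidean.prism}, gives $u\in C^{2,\alpha}(\overline{\Omega}\cap B_{1/2})$. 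Since $p$ was arbitrary and the conclusion is local, $\Sigma$ is a $C^{2,\alpha}$ graph over its tangent plane everywhere.

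The one genuinely load-bearing point — and the place where both geometric hypotheses are consumed — is the verification that the leading coefficient $a_{ij}$ vanishes on the face $\{x_i=0\}$ for $i\ne j$: the $\pi/2$ dihedral angle kills the off-diagonal metric coefficients on the faces, while the free boundary (Neumann) condition kills the transversal first derivative $u_i$ on $F_i$, and only together do these force $a_{ij}=0$, which is exactly what makes the even reflection across the faces $\{x_i=0\}$ produce an equation with $C^{0,\alpha}$ coefficients (so that Azzam's mixed boundary value estimates, invoked inside Proposition~\ref{proposition.appendix.regularity}, apply). A minor technical item is to confirm that the $C^{1,\alpha}$ graph has $W^{2,2}$ regularity so as to qualify as a weak solution in the sense required by Proposition~\ref{proposition.appendix.regularity}; this is routine, e.g. by a difference-quotient estimate on the even extension, or directly from the local $W^{2,2}$ bounds in \cite{EdelenLi2019regularity}.
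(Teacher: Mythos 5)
Your proposal is correct and follows essentially the same route the paper intends: Theorem \ref{theo.regularity.of.free.boundary.minimizing.surface} gives $C^{1,\alpha}$ graphicality, and then the minimal surface equation, written in the coordinates of Lemma \ref{lemma.local.foliation.2} with frozen $C^{0,\alpha}$ coefficients, satisfies the structural condition \eqref{equation.appendix.euclidean.assumptions} precisely because the $\pi/2$ angles kill $g_{ij}$ on the faces and the Neumann condition kills $u_i$ there, so Proposition \ref{proposition.appendix.regularity} upgrades to $C^{2,\alpha}$ — exactly the argument sketched in the two remarks preceding that proposition. Your identification of the load-bearing coefficient-vanishing step and the minor $W^{2,2}$ verification matches the paper's intended proof.
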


\begin{coro}
    Let $M$ be as above. Suppose $u\in W^{1,2}(M)$ is a weak solution to 
    \[\begin{cases}\Delta_g u + cu=f \quad&\text{in }M,\\ \pa{u}{\nu}=q_ju \quad&\text{on }F_j.\end{cases}\]
    Here $c,f\in C^{0,\alpha}(\overline{M})$, and $q_j\in C^{1,\alpha}(\overline{F_j})$ satisfy the compatibility condition 
    \[\paop{\nu_j}q_i=\paop{\nu_i}q_j\]
    along the intersection $F_i\cap F_j$. Then $u\in C^{2,\alpha}(\overline{M})$.
\end{coro}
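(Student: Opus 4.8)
The assertion is local, and on the interior of $M$ and on the top-dimensional strata of $\partial M$ it is the classical interior, resp.\ oblique-derivative, Schauder theory; so it suffices to establish $C^{2,\alpha}$-regularity in a neighbourhood of a point $p$ lying on the intersection of $k\ge 2$ faces $F_1,\dots,F_k$. Since in the present setting every pair of adjacent faces of $M$ is orthogonal, Lemma \ref{lemma.local.foliation.2} provides a chart identifying a neighbourhood of $p$ with $\Omega\cap B_1$, $\Omega=[0,\infty)^k\times\RR^{n-k}$, in which $\{x_i=0\}$ lies on $F_i$, $\partial_i$ is the outward unit conormal of $F_i$ along $F_i$, $g(\partial_i,\partial_j)=0$ on $F_i$ for $1\le i\ne j\le k$, and the Laplacian $\Delta_g$ has Laplacian principal part whose coefficient matrix satisfies the structure condition in \eqref{equation.appendix.euclidean.assumptions}.

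My plan is to convert the Robin condition into a homogeneous Neumann condition and then quote Proposition \ref{proposition.appendix.regularity}. The first step is to construct a function $\phi\in C^{2,\alpha}(\overline{\Omega\cap B_{1/2}})$ with $\partial_{\nu_i}\phi=q_i$ on $F_i\cap B_{1/2}$ for all $i=1,\dots,k$. Granting this, set $w:=e^{-\phi}u$. Then on each $F_i$,
\[
\partial_{\nu_i}w=e^{-\phi}\bigl(\partial_{\nu_i}u-(\partial_{\nu_i}\phi)\,u\bigr)=e^{-\phi}\bigl(q_iu-q_iu\bigr)=0 ,
\]
and a direct computation shows $w$ is a weak solution of
\[
\Delta_g w+2\langle\nabla\phi,\nabla w\rangle+\bigl(\Delta_g\phi+|\nabla\phi|^2+c\bigr)w=e^{-\phi}f .
\]
Since $\phi\in C^{2,\alpha}$ and $g\in C^{2,\alpha}$, all zeroth- and first-order coefficients and the right-hand side lie in $C^{0,\alpha}$, the principal part is unchanged, so the hypotheses \eqref{equation.appendix.euclidean.assumptions} are preserved, and $w$ has homogeneous Neumann data. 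It remains to know $w\in C^{1,\alpha}\cap W^{2,2}$ locally; as $\phi$ is $C^{2,\alpha}$ this is equivalent to the same for $u$, which follows from De Giorgi--Nash--Moser (yielding $u\in C^{0,\beta}$ up to the Lipschitz boundary) followed by the even-reflection scheme in the proof of Proposition \ref{proposition.appendix.regularity} run with merely $C^{0,\beta}$ data, together with the $H^2$-estimate on the convex corner domain $\Omega$. Proposition \ref{proposition.appendix.regularity} then gives $w\in C^{2,\alpha}(\overline{\Omega\cap B_{1/4}})$, hence $u=e^{\phi}w\in C^{2,\alpha}$ near $p$; covering $\overline M$ by finitely many such charts and interior/smooth-boundary charts completes the proof.

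The only real content, and the step I expect to be the main obstacle, is the construction of $\phi$, which is exactly where the compatibility hypothesis $\partial_{\nu_j}q_i=\partial_{\nu_i}q_j$ on $F_i\cap F_j$ must be used. I would build $\phi$ by induction over the faces. Suppose $\phi_{\ell-1}\in C^{2,\alpha}$ has $\partial_{\nu_i}\phi_{\ell-1}=q_i$ on $F_i$ for $i<\ell$ and $\partial_{\nu_i}\phi_{\ell-1}=0$ on $F_i$ for $i>\ell$ (vacuous for $\ell=1$). Let $r_\ell:=q_\ell-\partial_{\nu_\ell}\phi_{\ell-1}\in C^{1,\alpha}(\overline{F_\ell})$. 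For $j\ne\ell$ the field $\nu_j$ is tangent to $F_\ell$ along $F_\ell\cap F_j$, and there
\[
\partial_{\nu_j}r_\ell=\partial_{\nu_j}q_\ell-\partial_{\nu_\ell}\bigl(\partial_{\nu_j}\phi_{\ell-1}\bigr)=\partial_{\nu_j}q_\ell-\partial_{\nu_\ell}q_j=0 ,
\]
using the inductive hypothesis for $j<\ell$ (resp.\ $j>\ell$) and the compatibility condition — the interchange of $\partial_{\nu_j}$ and $\partial_{\nu_\ell}$ producing only continuous commutator terms that vanish in the chosen coordinates. Thus $r_\ell$ has vanishing conormal derivative along all of $F_\ell\cap\bigcup_{j\ne\ell}F_j$, so its iterated even reflections across these faces of $F_\ell$ extend it to a $C^{1,\alpha}$ datum on a full neighbourhood; solving a Neumann problem (say for $\Delta$) on the associated half-space with this conormal datum produces a correction $\psi_\ell\in C^{2,\alpha}$ with $\partial_{\nu_\ell}\psi_\ell=r_\ell$ and $\partial_{\nu_i}\psi_\ell=0$ on $F_i$ for $i\ne\ell$. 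Then $\phi_\ell:=\phi_{\ell-1}+\psi_\ell$ advances the induction and $\phi:=\phi_k$ is as desired. The delicate point is precisely that at each stage the residual $r_\ell$ is flat enough along the lower strata — which is what the compatibility condition guarantees — for the half-space Neumann extension to be carried out without spoiling the conditions already achieved on $F_1,\dots,F_{\ell-1}$.
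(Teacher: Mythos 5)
Your overall route is the paper's own: conjugate $w=e^{-\phi}u$ where $\partial_{\nu_j}\phi=q_j$ on $F_j$, so that the Robin condition becomes homogeneous Neumann data, check that the conjugated equation still has the structure \eqref{equation.appendix.euclidean.assumptions}, and invoke Proposition \ref{proposition.appendix.regularity}. The paper does exactly this (with $v_0=\phi$ and $w=u/e^{v_0}$); the only cosmetic difference is that it feeds Proposition \ref{proposition.appendix.regularity} through a local mixed Dirichlet--Neumann problem plus the maximum principle, while you apply it to $w$ directly after securing $C^{1,\alpha}\cap W^{2,2}$, which is fine.

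The one step the paper leaves implicit, the existence of $\phi$, is where your argument as written breaks. You demand that each correction $\psi_\ell$ satisfy $\partial_{\nu_\ell}\psi_\ell=r_\ell$ on $F_\ell$ \emph{and} $\partial_{\nu_i}\psi_\ell=0$ on every other face, including the untreated faces $F_j$, $j>\ell$. For a $C^2$ function these two requirements force, along $F_\ell\cap F_j$, $\partial_{\nu_j}r_\ell=\partial_{\nu_\ell}(\partial_{\nu_j}\psi_\ell)=0$. But under your inductive hypothesis ($\partial_{\nu_j}\phi_{\ell-1}=0$ on $F_j$ for $j>\ell$) the residual satisfies $\partial_{\nu_j}r_\ell=\partial_{\nu_j}q_\ell-\partial_{\nu_\ell}(\partial_{\nu_j}\phi_{\ell-1})=\partial_{\nu_j}q_\ell$, not $\partial_{\nu_j}q_\ell-\partial_{\nu_\ell}q_j$ as you wrote: the compatibility condition relates $\partial_{\nu_j}q_\ell$ to $\partial_{\nu_\ell}q_j$, not to $0$. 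Hence for $j>\ell$ the even reflection of $r_\ell$ across $F_\ell\cap F_j$ is only Lipschitz, and in fact no $C^2$ correction with homogeneous Neumann data on the untreated faces exists unless $\partial_{\nu_j}q_\ell=0$ there, which is not assumed; the induction in this strong form cannot close. The repair is to weaken it: only require $\partial_{\nu_i}\phi_\ell=q_i$ on $F_i$ for $i\le\ell$, reflect $r_\ell$ evenly only across the treated sub-faces $F_\ell\cap F_j$ with $j<\ell$ (there $\partial_{\nu_j}\phi_{\ell-1}=q_j$ and the compatibility condition do give $\partial_{\nu_j}r_\ell=0$), extend in $C^{1,\alpha}$ arbitrarily across the others, and take $\psi_\ell$ (e.g.\ the local half-space Neumann solution with this datum) even in $x_j$ for $j<\ell$, so the conditions already achieved are preserved; alternatively, build $\phi$ in one stroke as a second-order Whitney-type extension of the first-order data $\{q_i\}$ on the orthogonal faces, for which the stated compatibility condition is exactly the required consistency. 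With that fix the remainder of your argument goes through and coincides with the paper's proof.
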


\begin{proof}
    By standard elliptic regularity in Lipschitz domains, $u$ is $C^{0,\alpha}$ smooth. Since the functions $q_j$ satisfies the compatibility condition, there exists a $C^{2,\alpha}$ function in the interior of $M$, and on the smooth part of $\partial M$. Also, since $M$ is locally convex, it follows from \cite[Theorem 4.1]{Liberman1988holder} that $u\in C^{1,\alpha}(\overline{M})$.
    
    Since $\{q_j\}$ satisfies the compatibility condition, there exists a function $v_0\in C^{2,\alpha}(\overline{M})$ such that $\pa{v_0}{\nu}=q_j$ on $F_j$. Let $v=e^{v_0}$. Then it is straightforward to check that the function $w=\frac{u}{v}\in W^{2,2}(M)$ is a weak solution to an equation in the form
    \[\begin{cases}\Delta_g w + \vec{b}\cdot \nabla w + cw=f \quad&\text{in }M,\\ \pa{w}{\nu}=0 \quad&\text{on }\partial M.\end{cases}\]
    Since $w\in C^{1,\alpha}(\overline{M})$, by possibly subtracting $lw$ on both sides, where $l$ is a large constant, we may also without loss of generality assume that the constant term $c$ is non-positive. Near each point $p$ at the corner of $M$, consider the equation
    \[\begin{cases}
        \Delta_g w_0+\vec{b}\cdot w_0+cw_0=f \quad &\text{in }M\cap B_\rho(p),\\ \pa{w_0}{\nu}=0 \quad &\text{on }\partial M\cap B_\rho(p),\\ w_0=w \quad &\text{on }M\cap \partial B_\rho(p).
    \end{cases}\]
    By Proposition \ref{proposition.appendix.regularity}, the solution $w$ is in $C^{2,\alpha}(M\cap B_{\rho/2}(p))$. On the other hand, the maximum principle implies that $w=w_0$ in $\overline{M\cap B_\rho(p)}$. We then conclude that $w\in C^{2,\alpha}(\overline{M})$, and hence $u\in C^{2,\alpha}(\overline{M})$.
\end{proof}

\bibliographystyle{amsalpha}

\end{document}